\documentclass[12pt,a4paper,reqno]{amsart}

\usepackage{a4wide}
\usepackage{amssymb,amsmath,amsthm,mathrsfs}
\usepackage{graphicx}
\usepackage{float}
\usepackage{colortbl}
\usepackage{enumerate}
\usepackage{upref}
\usepackage[bookmarks=false]{hyperref}

\usepackage[T2A,T1]{fontenc}
\DeclareSymbolFont{cyrillic}{T2A}{cmr}{m}{n}
\DeclareMathSymbol{\D}{\mathalpha}{cyrillic}{196}

\parindent=0mm
\parskip=2mm

\theoremstyle{plain}
\newtheorem{theorem}{Theorem}[section]
\newtheorem{lemma}[theorem]{Lemma}
\newtheorem{corollaryP}[theorem]{Corollary}
\newtheorem{propositionP}[theorem]{Proposition}
\newtheorem{proposition}[theorem]{Proposition}
\newtheorem{corollary}[theorem]{Corollary}


\theoremstyle{definition}

\newtheorem*{condition}{Condition}

\theoremstyle{remark}
\newtheorem{remark}[theorem]{Remark}
\newtheorem*{note}{Note}

\usepackage{enumitem}
\makeatletter
\def\namedlabel#1#2{\begingroup
   #2%
 \def\@currentlabel{#2}%
   \phantomsection\label{#1}\endgroup
}

\def\R{\ensuremath{\mathbb R}}
\def\N{\ensuremath{\mathbb N}}

\def\I{\ensuremath{{\bf 1}}}
\def\e{{\ensuremath{\rm e}}}

\def\B{\ensuremath{\mathcal B}}

\def\l{\ensuremath{\left(}}
\def\r{\ensuremath{\right)}}
\def\P{\ensuremath{\mathcal P}}
\def\p{\ensuremath{\mathbb P}}

\def\o{\ensuremath{\text{o}}}

\def\A{\ensuremath{A^{(q)}}}
\def\n{\ensuremath{n}}

\def\X{\mathcal{X}}
\def\Y{\mathcal{Y}}

\def\ie{{\em i.e.}, }

\def\eps{\varepsilon}

\def\cv{\ensuremath{\text {Cor}}}
\newcommand{\dif}{\mathrm{d}}

\def\FS{\mathcal{FS}}

\numberwithin{equation}{section}

\begin{document}

\title{Speed of convergence for laws of rare events and escape rates}

\author[A. C. M. Freitas]{Ana Cristina Moreira Freitas}
\address{Ana Cristina Moreira Freitas\\ Centro de Matem\'{a}tica \&
Faculdade de Economia da Universidade do Porto\\ Rua Dr. Roberto Frias \\
4200-464 Porto\\ Portugal} \email{\href{mailto:amoreira@fep.up.pt}{amoreira@fep.up.pt}}
\urladdr{\url{http://www.fep.up.pt/docentes/amoreira/}}

\author[J. M. Freitas]{Jorge Milhazes Freitas}
\address{Jorge Milhazes Freitas\\ Centro de Matem\'{a}tica \& Faculdade de Ci\^encias da Universidade do Porto\\ Rua do
Campo Alegre 687\\ 4169-007 Porto\\ Portugal}
\email{\href{mailto:jmfreita@fc.up.pt}{jmfreita@fc.up.pt}}
\urladdr{\url{http://www.fc.up.pt/pessoas/jmfreita/}}

\author[M. Todd]{Mike Todd}
\address{Mike Todd\\ Mathematical Institute\\
University of St Andrews\\
North Haugh\\
St Andrews\\
KY16 9SS\\
Scotland} \email{\href{mailto:mjt20@st-andrews.ac.uk}{mjt20@st-andrews.ac.uk}}
\urladdr{\url{http://www.mcs.st-and.ac.uk/~miket/}}

\thanks{ACMF was partially supported by FCT grant SFRH/BPD/66174/2009. JMF was partially supported by FCT grant SFRH/BPD/66040/2009.  MT was partially supported by NSF grant DMS 1109587.  All authors are supported by FCT (Portugal) projects PTDC/MAT/099493/2008 and PTDC/MAT/120346/2010, which are financed by national and European structural funds through the programs  FEDER and COMPETE . All three authors were also supported by CMUP, which is financed by FCT (Portugal) through the programs POCTI and POSI, with national
and European structural funds. }

\date{\today}

\keywords{Extreme Value Theory, Return Time Statistics, Stationary Stochastic Processes, Metastability} \subjclass[2000]{37A50, 60G70, 37B20, 60G10, 37C25.}


\begin{abstract}
We obtain error terms on the rate of convergence to Extreme Value Laws, and to the asymptotic Hitting Time Statistics, for a general class of weakly dependent stochastic processes.
 The dependence of the error terms on the `time' and `length' scales is very explicit.  Specialising to data derived from a class of dynamical systems we find even more detailed error terms, one application of which is to consider escape rates through small holes in these systems.
\end{abstract}

\maketitle

\section{Introduction}
\label{sec:intro}

The study of the statistics of extreme events is both of classical importance, and a crucial topic across contemporary science.  Classically, the underlying stochastic processes are assumed to be independently distributed, but many more recent developments in this topic relate to the study of Extreme Value Laws (EVL) for dependent systems.  A standard approach to prove the existence of EVL in this setting is to check some conditions on the underlying process, for example Leadbetter's conditions $D(u_n)$ and $D'(u_n)$ in \cite{L73}.  Inspired by \cite{C01}, in a series of works  \cite{FF08, FFT10, FFT12, FFT13} the authors developed these conditions so that they had wider application, the main motivation being to stochastic processes coming from dynamical systems.  A natural question to now ask is: how fast is the convergence to the EVL?  (To rephrase, at a given finite stage in the process, what is the difference, or error term, between the law observed up to this time and the asymptotic law?)  For example, if the convergence were to be very slow, in a simulation the laws would be essentially invisible.    In this paper we address this question, particularly in the context of the latter papers above.

Error terms in the i.i.d.\ context are rather well-known, see for example \cite{HW79, S82} and the discussion in \cite[Section 2.4]{R08}.   However, the literature on dependent processes is much less extensive (for one case, see \cite{MS01}). On the other hand, if we think of our stochastic process as coming from a Markov chain or, more generally, a dynamical system, then there is an equivalence between EVL and Hitting Time Statistics (HTS) (see \cite{FFT10}), which then yields a significant body of literature coming from that side on these error terms \cite{GS97, A01, A04, AV09, AS11, K12}.  In this paper, taking inspiration from all these areas, we obtain sophisticated estimates on rates of convergence, where the dependence on time and `length' (i.e., the distance from the maximum) scales is made explicit.  We will first give general error terms under very general mixing conditions (in a way that unifies both clustering and non-clustering cases), and then impose some stronger conditions on our underlying process to obtain better estimates.

\subsection{A more technical introduction}

Let $X_0, X_1, \ldots$ be a stationary stochastic process, where each random variable (r.v.) $X_i:\Y\to\R$ is defined on the measure space $(\Y,\mathcal B,\p)$.
We assume, without loss of generality, that $\Y$  is a sequence space with a natural product structure so that each possible realisation of the stochastic process corresponds to a unique element of $\Y$ and there exists a measurable map $T:\Y\to\Y$, the time evolution map, which can be seen as the passage of one unit of time, so that 
\[
X_{i-1}\circ T =X_{i}, \quad \mbox{for all $i\in\N$}. 
\]
\begin{note}
There is an obvious relation between $T$ and the \emph{shift} map but we avoid that comparison here because we are definitely not reduced to the usual shift dynamics, in the sense that normally the shift map acts on sequences from a finite or countable alphabet, while here $T$, acts on spaces like $\R^\N$, in the sense that the sequences can be thought as being obtained from an alphabet like $\R$.
\end{note}

Stationarity means that $\p$ is $T$-invariant. Note that $X_i=X_0\circ T^i$, for all $i\in\N_0$, where $T^i$ denotes the $i$-fold composition of $T$, with the convention that $T^0$ denotes the identity map on $\Y$.  
 
We denote by $F$ the cumulative distribution function (d.f.) of $X_0$, \ie $F(x)=\p(X_0\leq x)$. Given any d.f.\ $F$, let $\bar{F}=1-F$ and let $u_F$ denote the right endpoint of the d.f.\ $F$, \ie
$
u_F=\sup\{x: F(x)<1\}.
$ We say we have an \emph{exceedance} of the threshold $u<u_F$ at time $j\in\N_0$ whenever $\{X_j>u\}$
occurs.

We define a new sequence of random variables  $M_1, M_2,\ldots$ given by
\begin{equation}
\label{eq:Mn-definition}
M_n=\max\{X_0,\ldots,X_{n-1}\}.
\end{equation}

We say that we have an \emph{Extreme Value Law} (EVL) for $M_n$ if there is a non-degenerate d.f.\ $H:\R\to[0,1]$ with $H(0)=0$ and,  for every $\tau>0$, there exists a sequence of levels $u_n=u_n(\tau)$, $n=1,2,\ldots$,  such that
\begin{equation}
\label{eq:un}
  n\p(X_0>u_n)\to \tau,\;\mbox{ as $n\to\infty$,}
\end{equation}
and for which the following holds:
\begin{equation}
\label{eq:EVL-law}
\p(M_n\leq u_n)\to \bar H(\tau)=1-H(\tau),\;\mbox{ as $n\to\infty$.}
\end{equation}
where the convergence is meant at the continuity points of $H(\tau)$.

Now let us assume that our underlying system is an ergodic measure-preserving dynamical system $f:\X\to \X$ where $(\X, \B, \mu)$ is a probability space.  (Note that, also, in this context we will often use $\p$ in place of $\mu$ to be more consistent with the rest of the paper.) Given an observable $\varphi:\X\to \R\cup\{\pm\infty\}$, we can define $X_n=\varphi\circ f^n$ for each $n\in \N$ (see Section~\ref{ssec:geom} for more details) and ask what the EVL here is.  If $\varphi$ is sufficiently regular and takes its maximum at a unique point $\zeta\in\X$ then the EVL here is related to the following concept. 

Consider a set $A\in\B$. We define a function that we refer to as \emph{first hitting time function} to $A$, denoted by $r_A:\X\to\N\cup\{+\infty\}$ where
\begin{equation}
\label{eq:hitting-time}
r_A(x)=\min\left\{j\in\N\cup\{+\infty\}:\; f^j(x)\in A\right\}.
\end{equation}
The restriction of $r_A$ to $A$ is called the \emph{first return time function} to $A$. We define the \emph{first return time} to $A$, which we denote by $R(A)$, as the infimum of the return time function to $A$, \ie
\begin{equation}
\label{eq:first-return}
R(A)=\inf_{x\in A} r_A(x).
\end{equation}

Given a point $\zeta\in X$, by Kac Lemma the expected value of $r_{B_\eps(\zeta)}$ when restricted to the $\eps$-ball $B_\eps(\zeta)$ is $1/ \p(B_\eps(\zeta))$.  We say that the system has HTS $H$ for balls around $\zeta$ if for each $\tau\in [0, \infty)$,
$$\lim_{\eps\to 0} \p\left(\left\{\p(B_\eps(\zeta)) r_{B_\eps(\zeta)}>\tau\right\}\right) =H(\tau)$$
for some d.f.\ $H:[0,\infty)\to [0,1]$.

In \cite{FFT10} the link between HTS and EVL was demonstrated and exploited.   So the main question in this paper is what are the orders of 
$$\left|\p(M_n\leq u_n)- \bar H(\tau)\right|$$
and
$$\left|\p\left(\left\{\p(B_\eps(\zeta)) r_{B_\eps(\zeta)}>\tau\right\}\right) -H(\tau)\right|?$$

For the rest of this introduction we will refer to $\tau$ as the time scale; and $n$ and $\p(B_\eps(\zeta))$ as the length scale (although, strictly speaking, the true time is rescaled by the inverse of the length).

In the context of HTS, the early results were mostly for sequences of shrinking dynamically defined cylinders, that is sets $A_n$ where $A_n$ was a maximal subset on which $f^n:A_n\to \X$ is a homeomorphism and $\zeta\in A_n$ for all $n\in \N$.  Moreover, certain strong mixing conditions were imposed.  For example \cite{GS97} gave error terms in terms of a power of the length scale for $\psi$-mixing systems.  A similar result was obtained, but in the context of $\phi$-mixing systems, in \cite{A01}, where lower bounds were also found.  Various similar results are described in \cite{AG01}, including a description of results of \cite{A82} for Markov chains and the results in \cite{HSV99}. 

A major breakthrough was made in \cite{A04} where for $\phi$-mixing systems with cylinders, the error terms also incorporated the time scale, so that increasing time $t$ meant that the error terms decreased by a factor comparable to $H(t)$.  Further refinements were also made in \cite{AV09}.  A similar approach was  used for $\alpha$-mixing processes in \cite{AS11}, but the error terms were not as powerful, in particular, the time scale was not so nicely decoupled from length in the estimates.  We remark that in all the results mentioned so far there is a parameter $\xi_{A_n}$ present, which converges in the limit, but allows a convenient perturbation of the asymptotic law $H$ to improve the apparent convergence. The results we present here are of a similar form to  \cite{AV09}, but are not restricted to balls and do not include this extra factor.

In the dependent EVL context, \cite{MS01} obtained error terms of the order of the length, but not independently of $\tau$.  This is also the case for many of the results in the EVL literature in the i.i.d.\ case.  We should also mention the rather complete results in \cite{K12} (see also \cite{KL09}) where, under an assumption of the nice behaviour of a transfer operator, similar results to ours were proved and applied in the EVL and HTS context (see further comments on this in Section~\ref{subsec:main-results}, in particular Remark~\ref{rem:error-terms}).  One of the main results there, and one of our key motivations here, is to use the error terms to find limit laws for escape of mass through a hole (a small ball), see for example \cite{DY06}.  By decoupling the time parameter in a suitable way in our error terms we are able to estimate how the escape rate depends on the size of the hole.

After finishing this paper, we were made aware of \cite{HN14} which is also concerned with convergence of the observations of maxima from dynamical systems to the relevant EVL.  The error terms there are only given in terms of time $n$, thus fitting with much of the EVL literature.  However, the results, which are applied to a range of one-dimensional systems, are rather fine, principally due to the fact that they make use of the sets $E_n$, described in \cite[Section 2]{C01} and further studied in \cite{HNT12}, which are sets of points which `recur too fast' for the dynamical system in question.  Note that in the latter paper, multiple maxima were also considered.

\subsection{Structure of the paper}
In Section~\ref{sec:gen thm} we first go into a few more technicalities and history of this topic, in particular explaining quantities which deal with clustering and the extremal index, before going on to state and prove our first main result, Theorem~\ref{thm:error-terms-general-SP-no-clustering}.  In Section~\ref{sec:L1 and struc} we suppose that our data comes from a dynamical system with a nice mixing property and state and comment on our other main results, Theorems~\ref{thm:sharp-error-EVL} and \ref{thm:sharp-error-HTS} and Corollary~\ref{cor:zero hole}.  In Section~\ref{sec:Lemmata} we prove some preparatory results we'll use to prove these theorems, while in Section~\ref{sec:EVL err} we prove Theorem~\ref{thm:sharp-error-EVL} and in Section~\ref{sec:HTS err} we prove Theorem~\ref{thm:sharp-error-HTS}.  In Section~\ref{sec:Rych} we give a natural application of our main results.

\textit{Acknowledgements.}   MT would like to thank A.\ Galves for an inspiring conversation. The authors would like to thank H.\ Bruin, N.\ Haydn and G.\ Keller for helpful comments.

\section{Error terms for general stationary stochastic processes under conditions of the type $D_2$ and $D'$}
\label{sec:gen thm}

In what follows for every $A\in\mathcal B$, we denote the complement of $A$ as $A^c:=\mathcal X\setminus A$.

For some $u\in\R$, $q\in \N$, we define the events:
\begin{equation}
\label{eq:U-A-def}
U(u):= \{X_0>u\}\mbox{ and }\A(u):=U(u)\cap\bigcap_{i=1}^{q}T^{-i}(U(u)^c)=\{X_0>u, X_1\leq u, \ldots, X_q\leq u\}.
\end{equation}
We also set $A^{(0)}(u):=U(u)$, $U_n:=U(u_n)$ and $\A_n:=\A(u_n)$, for all $n\in\N$ and $q\in\N_0$.
Let 
\begin{equation}
\label{def:thetan}
\theta_n:=\frac{\p\l\A_n\r}{\p(U_n)}.
\end{equation}

Let $B\in\B$ be an event. For some $s\geq0$ and $\ell\geq 0$, we define:
\begin{equation}
\label{eq:W-def}
\mathscr W_{s,\ell}(B)=\bigcap_{i=\lfloor s\rfloor}^{\lfloor s\rfloor+\max\{\lfloor\ell\rfloor-1,\ 0\}} T^{-i}(B^c).
\end{equation}
The notation $T^{-i}$ is used for the preimage by $T^i$. We will write $\mathscr W_{s,\ell}^c(B):=(\mathscr W_{s,\ell}(B))^c$. 
Whenever is clear or unimportant which event $B\in\B$ applies, we will drop the $B$ and write just $\mathscr W_{s,\ell}$ or $\mathscr W_{s,\ell}^c$. 
Observe that 
\begin{equation}
\label{eq:EVL-HTS}
\mathscr W_{0,n}(U(u))=\{M_n\leq u\}\qquad \mbox{and}\qquad T^{-1}(\mathscr W_{0,n}(B))=\{r_{B}>n\}.
\end{equation}
Also observe that $\mathscr W_{0,n}$ has the following interpretation in terms of the sequence $X_0, X_1, X_2, \ldots$, namely $T^i(x)\notin A_n$ means that $X_i(x)\leq u_n$ or $X_i(x)+j>u_n$ for some $j=1, \ldots, q$.

After the success of the classical  Extremal Types Theorem of Fisher-Tippet and Gnedenko in the i.i.d.\ setting, there has been a great deal of interest in studying the existence of EVL for dependent stationary stochastic processes. Building up on the work of Loynes and Watson, in \cite{L73}, Leadbetter proposed two conditions on the dependence structure of the stochastic processes, which he called $D(u_n)$ and $D'(u_n)$, that guaranteed the existence of the same EVLs of the i.i.d.\ applied to the partial maxima of sequences of random variables satisfying those conditions.

Condition $D(u_n)$ is a sort of uniform mixing condition adapted to this setting of extreme values where the main events of interest are exceedances of the threshold $u_n$. 
Let $F_{i_1,\ldots,i_n}$
denote the joint d.f. of $X_{i_1},\ldots,X_{i_n}$, and set
$F_{i_1,\ldots,i_n}(u)=F_{i_1,\ldots,i_n}(u,\ldots,u)$.
\begin{condition}[$D(u_n)$]We say that $D(u_n)$ holds
for the sequence $X_0,X_1,\ldots$ if for any integers
$i_1<\ldots<i_p$ and $j_1<\ldots<j_k$ for which $j_1-i_p>m$, and any
large $n\in\N$,
\[
\left|F_{i_1,\ldots,i_p,j_1,\ldots,j_k}(u_n)-F_{i_1,\ldots,i_p}(u_n)
F_{j_1,\ldots,j_k}(u_n)\right|\leq \alpha(n,t),
\]
uniformly for every $p,k\in\N$, where $\alpha(n,t_n)\xrightarrow[n\to\infty]{}0$, for some sequence
$t_n=o(n)$.
\end{condition}
Condition $D'(u_n)$ precludes the existence of clusters of exceedances of $u_n$. Let $(k_n)_{n\in\N}$ be a sequence of integers such that
\begin{equation}
\label{eq:kn-sequence-1}
k_n\to\infty,\quad \lim_{n\to\infty}k_n\alpha(n,t_n)=0,\quad\mbox{and}\quad  k_n t_n = o(n).
\end{equation}
\begin{condition}[$D'(u_n)$]\label{cond:D'} We say that $D'(u_n)$
holds for the sequence $X_0$, $X_1$, $X_2$, $\ldots$ if there exists a sequence $\{k_n\}_{n\in\N}$ satisfying \eqref{eq:kn-sequence-1} and such that
\begin{equation*}
\lim_{n\rightarrow\infty}\,n\sum_{j=1}^{\lfloor n/k_n \rfloor}\p( X_0>u_n,X_j>u_n)=0.
\end{equation*}
\end{condition}
Under these two conditions the EVL obtained corresponds to a standard exponential distribution, where $\bar H(\tau)=\e^{-\tau}$.

In certain circumstances, observed data clearly showed the existence of clusters of exceedances, which meant that $D'(u_n)$ did not hold. This motivated the study of EVL and the affect of clustering.  In fact it was observed that clustering of exceedances essentially produced the same type of EVL but with a parameter $0\leq\theta\leq1$, the Extremal Index (EI), so that $\bar H(\tau)=\e^{-\theta\tau}$: here $\theta$ quantifies the intensity of clustering. In order to show the existence of EVLs with a certain EI $\theta\leq1$, new conditions (replacing $D'(u_n)$) were devised. We mention condition $D''(u_n)$ of  \cite{LN89} and particularly the more general condition $D^{(k)}(u_n)$ of \cite{CHM91}, which also includes the case of absence of clustering. In fact, $D^{(k)}(u_n)$ (in the formulation of \cite[Equation (1.2)]{CHM91}) is equal to $D'(u_n)$, when $k=1$ and to $D''(u_n)$ when $k=2$. Together with condition $D(u_n)$, the  condition  $D^{(k)}(u_n)$) gave an EVL, where $\bar H(\tau)=\e^{-\theta\tau}$, with an EI $\theta$ given by O'Brien's formula, whenever the following limit exists:
\begin{equation}
\label{eq:OBrien-EI}
\theta=\lim_{n\to\infty}\theta_n=\lim_{n\to\infty}\frac{\p\l\A_n\r}{\p(U_n)}.
\end{equation}

When the stochastic processes arise from dynamical systems as described in Section~\ref{sec:L1 and struc} below, condition $D(u_n)$ cannot be verified using the usual available information about mixing rates of the system except in some very special situations, and even then only for certain subsequences of $n$. This means that the theory developed by Leadbetter and others (such as Nandagopalan, Chernick, Hsing and McCormick) is not practical in this dynamical systems context. For that reason, motivated by the work of Collet (\cite{C01}), the first and second named authors proposed a new condition called $D_2(u_n)$ for general stationary stochastic processes, which imposes a much weaker uniformity requirement than $D(u_n)$, which together with $D'(u_n)$ admitted a proof of the existence of EVL in the absence of clustering (with $\theta=1$). The great advantage of $D_2(u_n)$ is that it is so much weaker than $D(u_n)$, in what respects to uniformity, and follows easily for stochastic processes arising from systems with sufficiently fast decay of correlations. In the argument of \cite{FF08}, this weakening was achieved by a fuller application of condition $D'(u_n)$. 

In \cite{FFT12}, the authors proved a connection between periodicity and clustering. Motivated by the behaviour at periodic points, which created the appearance of clusters of exceedances, the authors proposed new conditions in order to prove the existence of EVLs with EI less than 1. The main idea was that, under a condition $S\!P_{p,\theta}(u_n)$, which imposed some sort of periodic behaviour of period $p$ on the structure of general stationary stochastic processes, the sequences $\p(M_n\leq u_n)$ and  $\p(\mathscr W_{0,n}(A^{(p)}(u_n))$ share the same limit (see \cite[Proposition~1]{FFT12}). Then the strategy was to prove the existence of a limit for $\p(\mathscr W_{0,n}(A^{(p)}(u_n))$, which was achieved under conditions $D^p(u_n)$ and $D'_p(u_n)$. These latter conditions can be seen as being obtained from $D_2(u_n)$ from \cite{FF08} and the original $D'(u_n)$, respectively, by replacing the role of exceedances $\{X_j>u_n\}$ by that of \emph{escapes}, which correspond to the event $\{X_j>u_n, X_{j+p}\leq u_n\}$. 

In \cite{AFV12} discontinuity points create two periodic types of behaviour (with possibly different periods) on the structure of the stochastic processes, so some further adjustments to conditions $D_p(u_n)$ and $D'_p(u_n)$ were needed. 

We remark that in all the cases above the main advantage of the conditions
$D_2(u_n)$, $D^p(u_n)$ is that they are much weaker than the original uniformity requirement imposed by $D(u_n)$ and, unlike $D(u_n)$, they all follow from sufficiently fast decay of correlations of the system.

While developing the techniques in this paper to sharpen the error terms, as in Sections~\ref{sec:EVL err} and \ref{sec:HTS err}, it was necessary to improve the estimates in \cite[Proposition~1]{FFT12} (this is done in Proposition~\ref{prop:relation-balls-annuli-general} below). One consequence of this is that we were then able to essentially remove condition $S\!P_{p,\theta}(u_n)$. Whence we refine all the conditions to obtain EVL in order to, on one hand, obtain a unified statement of the conditions which includes simultaneously the cases of absence and presence of clustering and, on the other hand, to combine all the scenarios considered before with no periodic behaviour, with simple periodic behaviour or multiple types of periodic behaviour. 

As can be seen in the historical discussion above, the conditions $D$ often come with many subscripts and superscripts.  To simplify the notation, here we employ instead a cyrillic D, i.e., $\D$.

\begin{condition}[$\D(u_n)$]\label{cond:D} We say that $\D(u_n)$ holds for the sequence $X_0,X_1,\ldots$ if for every  $\ell,t,n\in\N$ and $q\in\N_0$,
\begin{equation}\label{eq:D1}
\left|\p\left(\A_n\cap
 \mathscr W_{t,\ell}\left(\A_n\right) \right)-\p\left(\A_n\right)
  \p\left(\mathscr W_{0,\ell}\left(\A_n\right)\right)\right|\leq \gamma(q,n,t),
\end{equation}
where $\gamma(q,n,t)$ is decreasing in $t$ for each $q, n$ and, for every $q\in\N_0$, there exists a sequence $(t_n)_{n\in\N}$ such that $t_n=o(n)$ and
$n\gamma(q,n,t_n)\to0$ when $n\rightarrow\infty$.
\end{condition}

\begin{remark}
\label{rem:D}
Note that, the new condition $\D(u_n)$ does not impose any uniform bound independent of the number $q$ of random variables considered in the first block. On the contrary, the original $D(u_n)$ imposed a uniform bound independent of the number $p$ of random variables considered in the first block. This is the crucial difference that makes it possible to prove $\D(u_n)$ easily from decay of correlations of the underlying stochastic processes, in contrast to $D(u_n)$, which  is not possible to verify even in very simple situations. See Remark~\ref{rem:D-dc} for further explanations. The weakening of the uniformity imposed by $D(u_n)$ came at price on the rate function: while for, $D(u_n)$, we need $\lim_{n\to\infty}k_n\alpha(n,t_n)=0$, for $\D(u_n)$, we need $\lim_{n\to\infty}n\gamma(q,n,t_n)=0$. However, this is a very small price to pay since, when the stochastic processes arise from dynamical systems, to verify $\D(u_n)$ means we need decay of correlations at least at a summable rate. But this is precisely the regime at which one can prove Central Limit Theorems. Hence, even though we have a slight strengthening of the mixing rate, when we compare $\D(u_n)$ to $D(u_n)$, the weakening on the uniformity is so much more important that we believe it is fair to say  $\D(u_n)$ is considerably weaker than $D(u_n)$. This is cemented by the fact that $\D(u_n)$  can be verified in a huge range of examples arising from dynamical systems, where condition $D(u_n)$ simply cannot.
\end{remark}

For some fixed $q\in\N_0$, consider the sequence $(t_n)_{n\in\N}$, given by condition  $\D(u_n)$ and let $(k_n)_{n\in\N}$ be another sequence of integers such that 
\begin{equation}
\label{eq:kn-sequence}
k_n\to\infty\quad \mbox{and}\quad  k_n t_n = o(n).
\end{equation}

\begin{condition}[$\D'_q(u_n)$]\label{cond:D'q} We say that $\D'_q(u_n)$
holds for the sequence $X_0,X_1,X_2,\ldots$ if there exists a sequence $(k_n)_{n\in\N}$ satisfying \eqref{eq:kn-sequence} and such that
\begin{equation}
\label{eq:D'rho-un}
\lim_{n\rightarrow\infty}\,n\sum_{j=q+1}^{\lfloor n/k_n\rfloor-1}\p\left( \A_n\cap T^{-j}\left(\A_n\right)
\right)=0.
\end{equation}
\end{condition}

\begin{remark}
Note that condition $\D'_q(u_n)$ is very similar to condition $D^{(q+1)}(u_n)$ from \cite{CHM91}. Since $T^{-j}\left(\A_n\right)\subset \{X_j>u_n\}$, it is slightly weaker than $D^{(q+1)}(u_n)$ in the formulation of \cite[Equation (1.2)]{CHM91} but in the applications considered that does not make any difference. Note that if $q=0$ then we get condition $D'(u_n)$ from Leadbetter. 
\end{remark}

The following is the most general of the main theorems in this paper. Since condition $\D(u_n)$ is much weaker than the original $D(u_n)$ of Leadbetter, in the sense explained in Remark~\ref{rem:D}, then Theorem~\ref{thm:error-terms-general-SP-no-clustering} can be seen, in particular, as a generalisation of \cite[Corollary~1.3]{CHM91}, where the error terms are computed and with a much more extensive potential of application, which can be fully appreciated by the examples of stochastic processes arising from dynamical systems that can be addressed.

\begin{theorem}
\label{thm:error-terms-general-SP-no-clustering}
Let $X_0, X_1, \ldots$ be a stationary stochastic process and $(u_n)_{n\in\N}$ a sequence satisfying \eqref{eq:un}, for some $\tau>0$.  Assume that conditions $\D(u_n)$ and $\D_q'(u_n)$ hold for some $q\in\N_0$, and $(t_n)_{n\in\N}$ and $(k_n)_{n\in\N}$ are the sequences in those conditions.
Then, there exists $C>0$ such that for all $n$ large enough we have \begin{align*}
\big|\p(M_n\leq u_n)-\e^{-\theta_n\tau}\big|&\leq  C\Bigg[k_nt_n\frac{\tau}n+n\gamma(q,n,t_n)+n\sum_{j=q+1}^{\lfloor n/k_n \rfloor-1} \p\l\A_n\cap T^{-j}\l\A_n\r\r\\
&\quad+ \e^{-\theta_n\tau}\left(\left|\tau-n\p\l U_n\r\right|+\frac {\tau^2}{k_n}\right)+q\p\l U_n\setminus \A_n\r\Bigg],
\end{align*}
where $\theta_n$ is given by equation \eqref{def:thetan}.
\end{theorem}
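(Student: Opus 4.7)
The approach is a Leadbetter-style blocking argument, adapted both to the weaker mixing condition $\D(u_n)$ and to the escape-free event $\A_n$. Set $N := \lfloor n/k_n \rfloor$, so that the window $\{0, \ldots, n-1\}$ decomposes into roughly $k_n$ consecutive blocks of length $N$; inside each block, the last $t_n$ positions form a gap across which $\D(u_n)$ will provide decoupling. The route from $\p(M_n \leq u_n)$ to $e^{-\theta_n \tau}$ goes through the intermediate quantity $\p(\mathscr W_{0, N - t_n}(\A_n))^{k_n}$.

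The first step replaces $\{M_n \leq u_n\} = \mathscr W_{0, n}(U_n)$ by the larger event $\mathscr W_{0, n}(\A_n)$. Their symmetric difference consists of orbits with an exceedance but no escape in the window; letting $i^*$ denote the last such exceedance, the absence of an escape at $i^*$ forces another exceedance within the next $q$ positions, and since $X_{i^*+1}, \ldots, X_{n-1} \leq u_n$ this further exceedance must lie in $[n, i^*+q]$, pinning $i^* \in [n-q, n-1]$. Translating by $-i^*$ via stationarity, each such contribution becomes a subset of $U_n \setminus \A_n$ (rather than merely of $U_n$), so that a union bound over the $q$ possible values of $i^*$ yields the sharpened estimate $q \p(U_n \setminus \A_n)$ of Proposition~\ref{prop:relation-balls-annuli-general}.

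Next, inside each block we shrink $\mathscr W_{0, N}(\A_n)$ to $\mathscr W_{0, N - t_n}(\A_n)$ to reintroduce the gap, at cost $t_n \p(\A_n) \leq t_n \tau / n$ per block, or $k_n t_n \tau / n$ in total. With the gaps in place, iterated application of $\D(u_n)$ along the $k_n - 1$ block boundaries factorises $\p(\mathscr W_{0, n}(\A_n))$ as $\p(\mathscr W_{0, N - t_n}(\A_n))^{k_n}$ with aggregate mixing error bounded by $k_n \gamma(q, n, t_n) \leq n \gamma(q, n, t_n)$. For the single-block probability, inclusion-exclusion and stationarity give
\[
1 - \ell \p(\A_n) \; \leq \; \p(\mathscr W_{0, \ell}(\A_n)) \; \leq \; 1 - \ell \p(\A_n) + \ell \sum_{j=1}^{\ell - 1} \p(\A_n \cap T^{-j} \A_n),
\]
and the definition of $\A_n$ forces $\A_n \cap T^{-j} \A_n = \emptyset$ for $1 \leq j \leq q$ (the first factor requires $X_j \leq u_n$, the second $X_j > u_n$), leaving only indices $j \geq q+1$; summed over the $k_n$ blocks this produces the term $n \sum_{j=q+1}^{N-1} \p(\A_n \cap T^{-j} \A_n)$.

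Finally, setting $p := (N - t_n) \p(\A_n) = O(\tau/k_n)$, Taylor expansion yields $(1 - p)^{k_n} = e^{-k_n p}(1 + O(\tau^2/k_n))$, while $k_n p = \theta_n n \p(U_n) + O(k_n t_n \tau / n)$; replacing $n \p(U_n)$ by $\tau$ in the exponent contributes the error $e^{-\theta_n \tau} |\tau - n \p(U_n)|$, and the Taylor remainder the error $e^{-\theta_n \tau} \tau^2 / k_n$. I expect the technical heart of the argument to be the first step: the sharpening from the naive $q \p(U_n)$ down to $q \p(U_n \setminus \A_n)$ via the chain-forwards observation above is precisely the improvement over \cite[Proposition~1]{FFT12} that, as emphasised in the introduction, allows one to dispense with the periodicity condition $S\!P_{p,\theta}(u_n)$ and unify the clustering and non-clustering cases. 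The remaining steps amount to careful bookkeeping of Leadbetter-style errors.
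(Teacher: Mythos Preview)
Your overall architecture --- reduce to the escape event $\A_n$, block, open gaps, decouple, Taylor-expand --- matches the paper. The first step is exactly Proposition~\ref{prop:relation-balls-annuli-general}, and your chain-forwards argument for the $q\,\p(U_n\setminus\A_n)$ bound is precisely the paper's proof of that proposition. The Taylor step, the gap-opening cost, and the observation $\A_n\cap T^{-j}\A_n=\emptyset$ for $1\le j\le q$ are also correct.

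There is, however, a genuine gap in your decoupling step. You write that ``iterated application of $\D(u_n)$ along the $k_n-1$ block boundaries factorises $\p(\mathscr W_{0,n}(\A_n))$ as $\p(\mathscr W_{0,N-t_n}(\A_n))^{k_n}$.'' But condition $\D(u_n)$ only controls
\[
\big|\p(\A_n\cap\mathscr W_{t,\ell}(\A_n))-\p(\A_n)\,\p(\mathscr W_{0,\ell}(\A_n))\big|,
\]
i.e.\ it decouples a \emph{single} occurrence of $\A_n$ from a later $\mathscr W$-block; it does \emph{not} bound $|\p(\mathscr W_{0,s}\cap\mathscr W_{s+t,m})-\p(\mathscr W_{0,s})\p(\mathscr W_{0,m})|$. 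The latter is essentially Leadbetter's original $D(u_n)$, and the whole point of $\D(u_n)$ (cf.\ Remark~\ref{rem:D}) is that it is strictly weaker in this uniformity. So the factorisation you claim is not available, and the intermediate $\p(\mathscr W_{0,N-t_n}(\A_n))^{k_n}$ cannot be reached directly.

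The paper circumvents this by \emph{interleaving} your steps~3 and~4 rather than performing them in sequence. In each recursive step (Lemma~\ref{lem:inductive-step-1}), one writes $\mathscr W_{0,s}^c\cap\mathscr W_{s+t,m}$ as a union of events $T^{-j}(\A_n)\cap\mathscr W_{s+t,m}$; to \emph{each} of these one can legitimately apply $\D(u_n)$, and the inclusion--exclusion overlap terms produce the short-range sum $\sum_j\p(\A_n\cap T^{-j}\A_n)$ in the same stroke. One therefore compares $\p(\mathscr W_{0,n}(\A_n))$ directly to $(1-\ell\p(\A_n))^{k_n}$, skipping your intermediate product entirely (this is Proposition~\ref{prop:main-estimate-1}). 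This also explains the honest size of the mixing error: each of the $k_n$ recursive steps costs $\ell\,\gamma(q,n,t_n)$ (one application of $\D(u_n)$ per position in the block), so the total is $k_n\ell\,\gamma\approx n\gamma$, not the $k_n\gamma$ you wrote.
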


In case the limit in \eqref{eq:OBrien-EI} exists then we can use the previous result to obtain:

\begin{corollary}
\label{cor:error-terms-general-SP-no-clustering}
Let $X_0, X_1, \ldots$ be a stationary stochastic process and $(u_n)_{n\in\N}$ a sequence satisfying \eqref{eq:un}, for some $\tau>0$. Assume that conditions $\D(u_n)$ and $\D_q'(u_n)$ hold for some $q\in\N_0$, and $(t_n)_{n\in\N}$ and $(k_n)_{n\in\N}$ are the sequences in those conditions.  Moreover assume that the limit in \eqref{eq:OBrien-EI} exists.
Then, there exists $C>0$ such that for all $n\in\N$ we have \begin{align*}
\big|\p(M_n\leq u_n)-\e^{-\theta\tau}\big|&\leq  C\Bigg[k_nt_n\frac{\tau}n+n\gamma(q,n,t_n)+n\sum_{j=1}^{\lfloor n/k_n \rfloor} \p\l\A_n\cap T^{-j}\l\A_n\r\r\\
&\quad+ \e^{-\theta\tau}\left(\left|\tau-n\p\l U_n\r\right|+\frac {\tau^2}{k_n}+\left|\theta_n-\theta\right|\tau\right)+q\p\l U_n\setminus \A_n\r\Bigg],
\end{align*}
where $\theta_n$ is given by equation \eqref{def:thetan} and $\theta$ is given by equation \eqref{eq:OBrien-EI}.
\end{corollary}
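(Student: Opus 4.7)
The plan is to deduce the corollary directly from Theorem~\ref{thm:error-terms-general-SP-no-clustering} by just replacing $\theta_n$ with $\theta$ in the exponential and absorbing the resulting error into a new summand. The only two changes between the two estimates are that the summation range is enlarged from $j=q+1,\dots,\lfloor n/k_n\rfloor-1$ to $j=1,\dots,\lfloor n/k_n\rfloor$, which only makes the right-hand side bigger since the summands are non-negative, and that $\e^{-\theta_n\tau}$ is replaced by $\e^{-\theta\tau}$, for which we pay the extra $\e^{-\theta\tau}|\theta_n-\theta|\tau$ term.

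Concretely, I would apply the triangle inequality
\[
\bigl|\p(M_n\leq u_n)-\e^{-\theta\tau}\bigr|\leq \bigl|\p(M_n\leq u_n)-\e^{-\theta_n\tau}\bigr|+\bigl|\e^{-\theta_n\tau}-\e^{-\theta\tau}\bigr|,
\]
bound the first term by Theorem~\ref{thm:error-terms-general-SP-no-clustering}, and then handle the second term using the mean value theorem applied to $x\mapsto \e^{-x}$: there is some $\xi$ between $\theta_n\tau$ and $\theta\tau$ with
\[
\bigl|\e^{-\theta_n\tau}-\e^{-\theta\tau}\bigr|=\e^{-\xi}|\theta_n-\theta|\tau.
\]
Since $\theta_n\to\theta$ by assumption \eqref{eq:OBrien-EI}, for $n$ large enough $\theta_n\geq \theta/2$ (or any fixed fraction of $\theta$), so $\xi\geq \min(\theta_n\tau,\theta\tau)\geq \theta\tau-|\theta_n-\theta|\tau$, which gives $\e^{-\xi}\leq C'\e^{-\theta\tau}$ for some constant $C'$ independent of $n$ and $\tau$. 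This is exactly what is needed to produce the extra $\e^{-\theta\tau}|\theta_n-\theta|\tau$ term in the statement.

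After this substitution, the summation range in the Theorem bound, $j=q+1,\ldots,\lfloor n/k_n\rfloor-1$, is extended to $j=1,\ldots,\lfloor n/k_n\rfloor$ at the cost of adding finitely many non-negative terms, which only increases the upper bound. Finally, the statement claims the inequality holds for all $n\in\N$ rather than only for large $n$; this is harmless because for the finitely many small $n$ the trivial bound $|\p(M_n\leq u_n)-\e^{-\theta\tau}|\leq 2$ can be absorbed into a sufficiently large constant $C$.

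The main (and essentially only) non-trivial point is the Lipschitz-type bound on $|\e^{-\theta_n\tau}-\e^{-\theta\tau}|$ with the correct $\e^{-\theta\tau}$ prefactor; everything else is routine rewriting. Since Theorem~\ref{thm:error-terms-general-SP-no-clustering} has already done the real work, no additional dynamical or probabilistic input is needed for this corollary.
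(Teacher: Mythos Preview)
Your approach is correct and essentially identical to the paper's: both bound $\bigl|\e^{-\theta_n\tau}-\e^{-\theta\tau}\bigr|$ by $C\e^{-\theta\tau}|\theta_n-\theta|\tau$ (the paper via the Taylor estimate \eqref{eq:exponential-estimate2}, you via the mean value theorem) and then invoke Theorem~\ref{thm:error-terms-general-SP-no-clustering}. The one step you leave implicit, which the paper writes out explicitly, is that the prefactor $\e^{-\theta_n\tau}$ multiplying $\bigl(|\tau-n\p(U_n)|+\tau^2/k_n\bigr)$ inside the Theorem~\ref{thm:error-terms-general-SP-no-clustering} bound must itself be converted to $\e^{-\theta\tau}$; but your same observation $\e^{-\theta_n\tau}\le C'\e^{-\theta\tau}$ for large $n$ handles this immediately.
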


This applies to general stochastic processes as well as naturally applying to a large class of dynamical systems such as those studied in \cite{FFT10, FFT11, FFT12, HN14}.  Our focus on examples in this paper will be principally concerned with the later, stronger, results.

\begin{remark}
\label{rem:no-D'}
Note that the estimates of Theorem~\ref{thm:error-terms-general-SP-no-clustering} and Corollary~\ref{cor:error-terms-general-SP-no-clustering} hold under $\D(u_n)$ alone. However, if $\D'_q(u_n)$ does not hold, the upper bound is useless since the third term on the right hand side would not converge to $0$ as $n\to\infty$.
\end{remark}

\begin{remark}
\label{rem:rates-general-case}
	Behind the convergence result there is a `blocking argument' and several quantities are related to it. Namely, the number of blocks taken ($k_n$) and the size of the gaps between the blocks ($t_n$) have to satisfy \eqref{eq:kn-sequence}. The first error term, depending on the choices for adequate $k_n$ and $t_n$, typically, decays like $n^{-\delta}$, for some $0<\delta<1$. The second term depends on the long range mixing rates of the process ($\D(u_n)$). The third term takes into account the short range recurrence properties ($\D'(u_n)$). The fourth has three components, the first depends on the asymptotics of relation \eqref{eq:un}, the third on the asymptotics of \eqref{eq:OBrien-EI} and the second on the number of blocks, which must be traded off with the first term. Note that the term $\e^{-\theta\tau}\frac{\tau^2}{k_n}$ also appears in the i.i.d. case since it results from expansion \eqref{eq:exponential-estimate} below. The fifth term results from replacing $U_n$ by $A_n^{(q)}$ (see Proposition~\ref{prop:relation-balls-annuli-general}) and should decay like $1/n$. The constant $C$ may depend on the rates just mentioned but not on $\tau$.
\end{remark}

The rest of this section is devoted to the proof of Theorem~\ref{thm:error-terms-general-SP-no-clustering} and Corollary~\ref{cor:error-terms-general-SP-no-clustering}.

The following result gives a simple estimate but a rather important one. It is crucial in removing condition $S\!P_{p,\theta}(u_n)$ from \cite{FFT12}, to present in a unified way the results under the presence and absence of clustering in Theorem~\ref{thm:error-terms-general-SP-no-clustering} and, most of all, to obtain the sharper results in Theorems~\ref{thm:sharp-error-EVL} and \ref{thm:sharp-error-HTS}.   

\begin{proposition}
\label{prop:relation-balls-annuli-general}
Given an event $B\in\mathcal B$, let $q,n\in\N$ be such that $q<n$ and define $A=B\setminus \bigcup_{j=1}^{q} T^{-j}(B)$. Then
$$
\left|\p(\mathscr W_{0,n}(B))-\p(\mathscr W_{0,n}(A))\right|\leq \sum_{j=1}^{q} \p\left(\mathscr W_{0,n}(A)\cap T^{-n+j}(B\setminus A)\right).
$$
\end{proposition}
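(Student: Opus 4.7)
The plan is to exploit the immediate inclusion $\mathscr W_{0,n}(B) \subseteq \mathscr W_{0,n}(A)$, which follows from $A \subseteq B$, and then to control the resulting one-sided deficit by a ``last visit to $B$'' argument. Specifically, since $A \subseteq B$ gives $B^c \subseteq A^c$, we obtain $\mathscr W_{0,n}(B) \subseteq \mathscr W_{0,n}(A)$, so the absolute value on the left-hand side of the proposition collapses to $\p(\mathscr W_{0,n}(A) \setminus \mathscr W_{0,n}(B))$.

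The core of the proof is then the set-theoretic inclusion
$$
\mathscr W_{0,n}(A) \setminus \mathscr W_{0,n}(B) \;\subseteq\; \bigcup_{j=1}^{q} \bigl(\mathscr W_{0,n}(A) \cap T^{-(n-j)}(B \setminus A)\bigr),
$$
after which subadditivity of $\p$ yields the stated bound. To establish this inclusion, I would take an arbitrary $x$ in the left-hand side. Since $x \notin \mathscr W_{0,n}(B)$, the set of indices $i \in \{0,\dots,n-1\}$ with $T^i(x)\in B$ is non-empty, so let $i^{*}$ be its maximum. The hypothesis $x \in \mathscr W_{0,n}(A)$ forces $T^{i^{*}}(x) \notin A$, so in fact $T^{i^{*}}(x) \in B \setminus A$.

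The crucial observation is now that the definition of $A$ gives
$$
B \setminus A \;=\; B \cap \bigcup_{j=1}^{q} T^{-j}(B),
$$
so there exists some $j \in \{1,\dots,q\}$ with $T^{i^{*}+j}(x) \in B$. Maximality of $i^{*}$ rules out $i^{*} + j \leq n-1$ (else the index $i^{*}+j$ would contradict the choice of $i^{*}$), and hence $i^{*} + j \geq n$, forcing $i^{*} \geq n-q$. Setting $j := n - i^{*} \in \{1,\dots,q\}$ then places $x$ in the $j$-th set of the union, completing the inclusion.

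The only real subtlety is this combinatorial bookkeeping with the last visit: no mixing, invariance, or decay-of-correlations input is needed, only the observation that passing from $B$ to $A$ can lose a trajectory only when its last excursion into $B$ inside the window $\{0,\dots,n-1\}$ lies so close to the endpoint that the forced subsequent return to $B$ is pushed outside the window.
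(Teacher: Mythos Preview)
Your proof is correct and follows essentially the same ``last visit to $B$'' argument as the paper: both identify the maximal index $i^*$ (the paper calls it $\ell$) with $T^{i^*}(x)\in B$, and use the structure of $A$ together with maximality to force $i^*\in\{n-q,\dots,n-1\}$. The only cosmetic difference is that the paper phrases the key step as a proof by contradiction (assuming no late visit to $B$ and deriving $T^\ell(x)\in A$), whereas you argue directly that $T^{i^*}(x)\in B\setminus A$ forces a later return that must fall outside the window.
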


\begin{proof}
Since $A\subset B$, then clearly $\mathscr W_{0,n}(B)\subset \mathscr W_{0,n} (A)$. Hence, we have to estimate the probability of $\mathscr W_{0,n} (A)\setminus \mathscr W_{0,n}(B)$ which corresponds to the set of points that at some time before $n$ enter the $B$ but never enter its subset $A$. 

Let $x\in\mathscr W_{0,n} (A)\setminus \mathscr W_{0,n}(B)$. Then  $T^j(x)\in B$ for some  $j=1,\ldots,n-1$ but $T^j(x)\notin A$ for all such $j$. We will see that there exists $j\in\{1,\ldots, q\}$ such that $T^{n-j}(x)\in B$. In fact, suppose that no such $j$ exists. Then let $\ell=\max\{i\in\{1,\ldots, n-1\}:\, T^{i}(x)\in B\}$ be the last moment the orbit of $x$ enters $B$ during the time period in question. Then, clearly, $\ell<n-q$. Hence, if $T^i(x)\notin B$, for all $i=\ell+1,\ldots,n-1$ then we must have that $T^\ell(x)\in A$ by definition of $A$. But this contradicts the fact that $x\in\mathscr W_{0,n} (A)$. Consequently, we have that there exists $j\in\{1,\ldots, q\}$ such that $T^{n-j}(x)\in B$ and since $x\in\mathscr W_{0,n} (A)$ then we can actually write $T^{n-j}(x)\in B\setminus A$.  

This means that $\mathscr W_{0,n} (A)\setminus \mathscr W_{0,n}(B)\subset \bigcup_{j=1}^q T^{-n+j}(B\setminus A)\cap \mathscr W_{0,n} (A)$ and then
\begin{multline*}
\big|\p(\mathscr W_{0,n}(B)-\p(\mathscr W_{0,n}(A))\big|=\p(\mathscr W_{0,n} (A)\setminus \mathscr W_{0,n}(B))\\ \leq 
\p\left(\bigcup_{j=1}^q T^{-n+j}(B\setminus A)\cap \mathscr W_{0,n} (A)\right)\leq\sum_{j=1}^{q} \p\left(\mathscr W_{0,n}(A)\cap T^{-n+j}(B\setminus A)\right),
\end{multline*}
as required.
\end{proof}

In what follows we will need the error term of the limit expression $\lim_{k\to\infty}\left(1+\frac x n\right)^n=\e^x$, namely,\begin{equation}
\label{eq:exponential-estimate}
\left(1+\frac x n\right)^n=\e^x\left(1-\frac {x^2}{2n}+\frac{x^3(8+3x)}{24n^2}+O\left(\frac1{n^3}\right)\right),
\end{equation}
which holds uniformly for $x$ on bounded sets. Also, by Taylor's expansion,  for every $\delta\in\R$ and $x\in\R$ we have
\begin{equation}
\label{eq:exponential-estimate2}
\left|\e^{x+\delta}-\e^{x}\right|\leq \e^{x}\left(|\delta|+\e^{|\delta|}\delta^2/2\right).
\end{equation}

The strategy is to use a blocking argument, that goes back to Markov, which consists of splitting the data into blocks with gaps of increasing length. There are three main steps. The first step is to estimate the error produced by neglecting the data corresponding to the gaps. The second is to use essentially the mixing condition $\D(u_n)$ (with some help from $\D'_q(u_n)$) to show that the probability of the event corresponding to the global maximum being less than some threshold $u_n$ can be approximated by the product of the probabilities of the maxima within each block being less than $u_n$. The idea is that the gaps make the maxima in each block become practically independent from each other. The last step is to use condition $\D'_q(u_n)$ to estimate the probability of the maximum within a block being smaller than $u_n$.   

Next, we state a couple of lemmas and a proposition that give the main estimates regarding the use of a blocking argument. Their proofs can be found in \cite[Section~3.4]{F13}. 

\begin{lemma}
\label{lem:time-gap-1}
For any fixed $A\in \mathcal B$ and $s,t',m\in\N$, 
 we have:
\begin{equation*}
\left|\p(\mathscr W_{0,s+t'+m}(A))-\p(\mathscr W_{0,s}(A)\cap \mathscr W_{s+t',m}(A))\right|\leq t'\p(A).
\end{equation*}
 \end{lemma}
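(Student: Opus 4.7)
The lemma compares two events that differ exactly in whether we also require no hit of $A$ during the gap $[s, s+t'-1]$. So my plan is to exploit this inclusion directly and control the missing piece by a crude union bound, using stationarity.

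First I would unpack the definition: $\mathscr W_{0,s+t'+m}(A)=\bigcap_{i=0}^{s+t'+m-1}T^{-i}(A^c)$ while $\mathscr W_{0,s}(A)\cap\mathscr W_{s+t',m}(A)=\bigcap_{i\in I}T^{-i}(A^c)$ with $I=\{0,\ldots,s-1\}\cup\{s+t',\ldots,s+t'+m-1\}$. The first event imposes strictly more constraints than the second (it also forbids hits on the gap indices $s,s+1,\ldots,s+t'-1$), so
\[
\mathscr W_{0,s+t'+m}(A)\subset \mathscr W_{0,s}(A)\cap \mathscr W_{s+t',m}(A).
\]
Therefore the absolute value in the statement equals the probability of the difference set, namely the set of points which avoid $A$ on $\{0,\ldots,s-1\}\cup\{s+t',\ldots,s+t'+m-1\}$ but visit $A$ at some time in the gap $\{s,\ldots,s+t'-1\}$.

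Next I would discard the ``avoiding'' part (which only shrinks the set) and bound the difference from above by
\[
\bigcup_{i=s}^{s+t'-1}T^{-i}(A).
\]
A union bound gives $\p(\bigcup_{i=s}^{s+t'-1}T^{-i}(A))\le \sum_{i=s}^{s+t'-1}\p(T^{-i}(A))$, and stationarity ($T$-invariance of $\p$) reduces each summand to $\p(A)$. The sum has exactly $t'$ terms, yielding the desired bound $t'\p(A)$.

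There is essentially no obstacle here: the lemma is purely set-theoretic combined with one application of stationarity, with no mixing input required. The only thing to be slightly careful about is the direction of the inclusion (smaller set requires more avoidance) so that the absolute value collapses into a single one-sided estimate, and matching the index ranges in $\mathscr W_{s+t',m}$ under the convention \eqref{eq:W-def} so that the gap really has length $t'$ rather than $t'\pm 1$.
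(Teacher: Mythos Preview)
Your argument is correct and is essentially the same approach the paper takes: the paper does not prove Lemma~\ref{lem:time-gap-1} explicitly (it refers to \cite[Section~3.4]{F13}), but the proof of the sharper Lemma~\ref{lem:time-gap} begins with exactly your inclusion $\mathscr W_{0,s+t'+m}\subset \mathscr W_{0,s}\cap\mathscr W_{s+t',m}$, writes the difference as $\p(\mathscr W_{0,s}\cap \mathscr W_{s,t'}^c\cap\mathscr W_{s+t',m})$, and bounds it via the union $\bigcup_{j} T^{-j}(A)$ over the gap indices together with stationarity. Stopping that argument before the decay-of-correlations step gives precisely your bound $t'\p(A)$.
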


\begin{lemma}
\label{lem:inductive-step-1}For any fixed $A\in \mathcal B$ and integers $s,t,m$, we have:
\begin{multline*}
\left|\p(\mathscr W_{0,s}(A)\cap \mathscr W_{s+t,m}(A))-\p(\mathscr W_{0,m}(A))(1-s\p(A))\right|\leq\\ \left|s\p(A)\p(\mathscr W_{0,m}(A))-\sum_{j=0}^{s-1}\p(A\cap \mathscr  W_{s+t-j,m}(A))\right| +2s\sum_{j=1}^{s-1}\p(A\cap T^{-j}(A)).
\end{multline*}
 \end{lemma}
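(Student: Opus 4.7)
The plan is to compute $\p(\mathscr W_{0,s}(A)\cap \mathscr W_{s+t,m}(A))$ by inclusion--exclusion on the first block. The key starting observation is that
$$\mathscr W_{0,s}(A)\cap \mathscr W_{s+t,m}(A)=\mathscr W_{s+t,m}(A)\setminus \bigcup_{j=0}^{s-1}\bigl(T^{-j}(A)\cap \mathscr W_{s+t,m}(A)\bigr),$$
and that stationarity of $\p$ gives $\p(\mathscr W_{s+t,m}(A))=\p(\mathscr W_{0,m}(A))$. Writing $E_j:=T^{-j}(A)\cap \mathscr W_{s+t,m}(A)$, this immediately reduces the quantity of interest to
$$\left|\,s\p(A)\p(\mathscr W_{0,m}(A))-\p\Bigl(\bigcup_{j=0}^{s-1}E_j\Bigr)\,\right|.$$

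Next I would run two standard manoeuvres in parallel. First, by the Bonferroni inequality,
$$\left|\p\Bigl(\bigcup_{j=0}^{s-1}E_j\Bigr)-\sum_{j=0}^{s-1}\p(E_j)\right|\leq \sum_{0\leq i<j\leq s-1}\p(E_i\cap E_j),$$
and since $E_i\cap E_j\subset T^{-i}(A)\cap T^{-j}(A)$, stationarity collapses each pair to $\p(A\cap T^{-(j-i)}(A))$; collecting by the lag $k=j-i$ and using the crude bound $s-k\leq s$ yields an upper bound of $s\sum_{k=1}^{s-1}\p(A\cap T^{-k}(A))$, which is absorbed in the stated $2s$-coefficient (the extra factor is pure slack). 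Second, stationarity applied to each summand gives $\p(E_j)=\p(A\cap \mathscr W_{s+t-j,m}(A))$, because the set identity $T^{-j}(A)\cap \mathscr W_{s+t,m}(A)=T^{-j}\bigl(A\cap \mathscr W_{s+t-j,m}(A)\bigr)$ follows by a direct rewriting of the indices defining $\mathscr W$.

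A final triangle inequality then splits the reduced expression into precisely the two quantities on the right-hand side of the lemma: the first term is the difference between $s\p(A)\p(\mathscr W_{0,m}(A))$ and $\sum_{j=0}^{s-1}\p(A\cap \mathscr W_{s+t-j,m}(A))$, while the Bonferroni correction becomes the recurrence term $2s\sum_{j=1}^{s-1}\p(A\cap T^{-j}(A))$. The argument is really an organisational exercise rather than a technical one; I do not expect any genuine obstacle. The only subtlety worth flagging is the index-shifting identity for $\mathscr W_{s+t,m}(A)$ under $T^{-j}$, which relies on the first-block window $[0,s-1]$ being disjoint from the second-block window $[s+t,s+t+m-1]$ so that $s+t-j>0$ throughout the range of summation.
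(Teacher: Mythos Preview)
Your proof is correct and follows the same approach as the paper. The paper itself defers this particular proof to \cite[Section~3.4]{F13}, but the identical strategy---write the complement of $\mathscr W_{0,s}$ as the union $\bigcup_j T^{-j}(A)$ and control it by the first two Bonferroni bounds, then use stationarity on each summand---is carried out verbatim in the proof of the sharpened Lemma~\ref{lem:inductive-step}; your observation that the coefficient $s$ (rather than $2s$) already suffices is also correct.
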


\begin{propositionP}
\label{prop:main-estimate-1} Fix $A\in \mathcal B$ and $n\in\N$. Let $\ell,k,t\in\N$ be such that $\ell=\lfloor n/k \rfloor$ and $\ell\p(A)<1$.  We have:
\begin{multline*}
\left|\p(\mathscr W_{0,n}(A))-(1-\ell\,\p(A))^{k}\right|\leq \\ 2kt\p(A)+ 2n\sum_{j=1}^{\ell-1}\p(A\cap T^{-j}(A))
+\sum_{i=0}^{k-1} \left|\ell\p(A)\p(\mathscr W_{0,i(\ell+t)})-\sum_{j=0}^{\ell-1}\p(A\cap \mathscr  W_{\ell+t-j,i(\ell+t)})\right|. \end{multline*}
 \end{propositionP}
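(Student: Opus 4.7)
The plan is to run a standard blocking/telescoping argument where the two preceding lemmas do the per-step work, and the main task is bookkeeping of the accumulated error. Set $Q_i := \p(\mathscr W_{0,i(\ell+t)}(A))$ for $0\le i\le k$, with $Q_0=1$. First I would dispose of the mismatch between $n$ and $k(\ell+t)$: since $\ell=\lfloor n/k\rfloor$ gives $k\ell\le n\le k(\ell+t)$, the event $\mathscr W_{0,k(\ell+t)}(A)$ is contained in $\mathscr W_{0,n}(A)$, and their symmetric difference is covered by the union of $\{T^{-i}A\}$ for $n\le i<k(\ell+t)$, producing
\[
\bigl|\p(\mathscr W_{0,n}(A))-Q_k\bigr|\le (k(\ell+t)-n)\p(A)\le kt\,\p(A).
\]

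Next I would prove the one-step recursion
\[
\bigl|Q_i-(1-\ell\p(A))Q_{i-1}\bigr|\le t\,\p(A)+R_i,
\]
where
\[
R_i=\Bigl|\ell\p(A)\,\p(\mathscr W_{0,(i-1)(\ell+t)})-\sum_{j=0}^{\ell-1}\p\bigl(A\cap\mathscr W_{\ell+t-j,(i-1)(\ell+t)}\bigr)\Bigr|+2\ell\sum_{j=1}^{\ell-1}\p(A\cap T^{-j}(A)).
\]
This is obtained by applying Lemma~\ref{lem:time-gap-1} with $s=\ell$, $t'=t$, $m=(i-1)(\ell+t)$ to replace $Q_i$ by $\p(\mathscr W_{0,\ell}(A)\cap\mathscr W_{\ell+t,(i-1)(\ell+t)}(A))$ at cost $t\p(A)$, and then applying Lemma~\ref{lem:inductive-step-1} with the same $s$ and $m$ to extract the factor $(1-\ell\p(A))$.

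Now I would iterate: writing $e_i:=|Q_i-(1-\ell\p(A))^i|$ and using $0\le 1-\ell\p(A)\le 1$, the recursion gives $e_i\le e_{i-1}+t\p(A)+R_i$, which telescopes to
\[
e_k\le kt\,\p(A)+\sum_{i=1}^{k}R_i.
\]
Reindexing the sum via $i-1\mapsto i$, and bounding $2k\ell\sum_{j=1}^{\ell-1}\p(A\cap T^{-j}A)\le 2n\sum_{j=1}^{\ell-1}\p(A\cap T^{-j}A)$ (using $k\ell\le n$), one obtains exactly the second and third claimed error terms. Combining with the initial step yields the factor $2kt\p(A)$, finishing the proof.

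I do not expect any genuine obstacle: the hypothesis $\ell\p(A)<1$ is used only to ensure $0\le 1-\ell\p(A)\le 1$ so that no amplification occurs in the telescoping. The only point requiring a little care is verifying $k(\ell+t)\ge n$ so that the initial reduction is a containment (not the other way round) and the cost is indeed at most $kt\p(A)$ rather than involving $\ell\p(A)$; this follows from $\ell=\lfloor n/k\rfloor$ as noted above. Everything else is a mechanical application of the two preceding lemmas and a triangle-inequality telescope.
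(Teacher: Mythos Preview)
Your proposal is correct and is essentially the same argument as the paper's: an initial reduction from $n$ to $k(\ell+t)$ costing $kt\,\p(A)$, the same per-step bound via Lemmas~\ref{lem:time-gap-1} and~\ref{lem:inductive-step-1}, and a telescoping that uses $0\le 1-\ell\p(A)\le 1$ to prevent amplification. The only cosmetic difference is that you phrase the telescope additively via $e_i\le e_{i-1}+t\p(A)+R_i$, whereas the paper writes out the multiplicative telescope $\sum_{i=0}^{k-1}(1-\ell\p(A))^{k-1-i}\Upsilon_i$ and then bounds each factor by $1$; these are equivalent.
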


\begin{proof}
 The basic idea is to split the time interval $[0,n)$ into $k$ blocks of size $\lfloor n/k \rfloor$. Then, using Lemma~\ref{lem:time-gap-1} we add gaps of size $t$ between the blocks, and next we apply Lemma~\ref{lem:inductive-step-1} recursively until we exhaust all the blocks. 

Noting that $0\leq k(\ell+t)-n\leq kt$ and using Lemma~\ref{lem:time-gap-1}, with $s=n$, $t'= k(\ell+t)-n$, $m=0$, and setting $\mathscr W_{i,0}:=\X$, for all $i=0,1,2,\ldots$ as well as $\mathscr W_{i,n}=\mathscr W_{i,n}(A)$ for $n\in \N$, we have:
\begin{equation}
\label{eq:approx1-1}
\Big|\p(\mathscr W_{0,n})-\p(\mathscr W_{0,k(\ell+t)})\Big|\leq kt\p(A). 
\end{equation}

  It follows by using Lemmas~\ref{lem:time-gap-1} and \ref{lem:inductive-step-1} that
\begin{align}
\Big|\p(\mathscr W_{0,i(\ell+t)})-&(1-\ell\p(A))\p(\mathscr W_{0,(i-1)(\ell+t)})\Big|\leq 
\left|\p(\mathscr W_{0,i(\ell+t)})-\p(\mathscr W_{0,\ell}\cap\mathscr W_{(\ell+t),(i-1)(\ell+t)})\right|\nonumber\\
&\quad+\left|\p(\mathscr W_{0,\ell}\cap\mathscr W_{(\ell+t),(i-1)(\ell+t)})-(1-\ell\p(A))\p(\mathscr W_{0,(i-1)(\ell+t)})\right|\nonumber \\
&\leq t\p(A)+\left|\ell\p(A)\p(\mathscr W_{0,(i-1)(\ell+t)})-\sum_{j=0}^{\ell-1}\p(A\cap \mathscr  W_{\ell+t-j,(i-1)(\ell+t)})\right| \nonumber \\
&\quad+ 2\ell\sum_{j=1}^{\ell-1}\p(A\cap T^{-j}(A)),\label{eq:induction-step-1}
\end{align} 
Let $\Upsilon_i:=t\p(A)+\left|\ell\p(A)\p(\mathscr W_{0,i(\ell+t)})-\sum_{j=0}^{\ell-1}\p(A\cap \mathscr  W_{\ell+t-j,i(\ell+t)})\right|+2\ell\sum_{j=1}^{\ell-1}\p(A\cap T^{-j}(A)).$ 
 Since $\ell\p(A)<1$, then it is clear that $|(1-\ell\p(A))|<1$. Also, note that $|\p(\mathscr W_{0,\ell+t})-(1-\ell\p(A))|\leq \Upsilon_0$. Now, we use (\ref{eq:induction-step-1}) recursively to estimate $\left|\p(\mathscr W_{0,k(\ell+t)})-(1-\ell\p(A))^k\right|$. In fact, we have
 \begin{align}
\left|\p(\mathscr W_{0,k(\ell+t)})-(1-\ell\p(A))^k\right|&\leq \sum_{i=0}^{k-1} (1-\ell\p(A))^{k-1-i}\left|\p(\mathscr W_{0,(i+1)(\ell+t)})-(1-\ell\p(A))\,\p(\mathscr W_{0,i(\ell+t)})\right|\nonumber\\ \label{eq:recursive-estimate-1}
&\leq \sum_{i=0}^{k-1} (1-\ell\p(A))^{k-1-i}\,\Upsilon_i \leq \sum_{i=0}^{k-1}\Upsilon_i
\end{align}
The result follows now at once from (\ref{eq:approx1-1}) and (\ref{eq:recursive-estimate-1}).
\end{proof}

We are now in a position to prove Theorem~\ref{thm:error-terms-general-SP-no-clustering}.

\begin{proof}[Proof of Theorem~\ref{thm:error-terms-general-SP-no-clustering}]
Letting $A=\A_n$, $\ell=\lfloor n/k_n\rfloor$, $k=k_n$ and $t=t_n$ on Proposition~\ref{prop:main-estimate-1}, we obtain
\begin{multline}
\label{eq:error1}
\left|\p(\mathscr W_{0,n}(\A_n))-\left(1-\left\lfloor \frac n{k_n}\right\rfloor\p(\A_n)\right)^{k_n}\right|\leq  2k_nt_n\p(U_n)+ 2n\sum_{j=1}^{\lfloor n/k_n\rfloor-1}\p\l\A_n \cap T^{-j}\A_n\r\\
+\sum_{i=0}^{k_n-1}\left|\left\lfloor \frac n{k_n}\right\rfloor\p(\A_n)\p\left(\mathscr W_{0,i(\ell_n+t_n)}\l\A_n\r\right)-\sum_{j=0}^{\lfloor n/k_n\rfloor-1}\p\left(\A_n\cap \mathscr  W_{\ell_n+t_n-j,i(\ell_n+t_n)}\l\A_n\r\right)\right|.
\end{multline}
Using condition $\D(u_n)$, we have that for the third term:
\begin{equation}
\label{eq:error1.5}
\sum_{i=0}^{k_n-1}\left|\left\lfloor \frac n{k_n}\right\rfloor\p(\A_n)\p\left(\mathscr W_{0,i(\ell+t)}\l\A_n\r\right)-\sum_{j=0}^{\lfloor n/k_n\rfloor-1}\p\left(\A_n\cap \mathscr  W_{\ell+t-j,i(\ell+t)}\l\A_n\r\right)\right|\leq n\gamma(q,n,t_n).
\end{equation}  

By \eqref{eq:exponential-estimate2}, we have that there exists $C$ such that
\begin{align*}
\left|\e^{-\left\lfloor \frac n{k_n}\right\rfloor k_n\p\l\A_n\r}-\e^{-\theta_n \tau}\right|&\leq \e^{-\theta_n\tau}\left[\left|\theta_n\tau-\left\lfloor \frac n{k_n}\right\rfloor k_n\p\l\A_n\r\right|+\o\left(\left|\theta_n\tau-\left\lfloor \frac n{k_n}\right\rfloor k_n\p\l\A_n\r\right|\right)\right]\nonumber\\
&\leq C   \e^{-\theta_n\tau}\left|\theta_n\tau-n\p\l\A_n\r\right|\nonumber\\
&\leq C   \e^{-\theta_n\tau}\left|\tau-n\p\l U_n\r\right|.
\end{align*}
Using \eqref{eq:exponential-estimate} and \eqref{eq:exponential-estimate2}, there exists  $C'>0$  such that
\begin{align*}
\left|\left(1-\left\lfloor \frac n{k_n}\right\rfloor\p\left(\A_n\right)\right)^{k_n}-\e^{-\left\lfloor \frac n{k_n}\right\rfloor k_n\p\l\A_n\r}\right|&=\e^{-\left\lfloor \frac n{k_n}\right\rfloor k_n\p\l\A_n\r}\left(\frac{\left(n\p\l\A_n\r\right)^2}{2k_n}+\o\left(\frac{1}{k_n}\right)\right)\nonumber \\
&\leq \e^{-\theta_n\tau} \left(\frac {\tau^2}{k_n}+C\frac {\tau^2}{k_n}\left|\tau-n\p\l U_n\r\right|+\o\left(\frac{1}{k_n}\right)\right)
\\
&\leq C'\e^{-\theta_n\tau} \frac {\tau^2}{k_n}.
\end{align*}
Hence, there exists $C''>0$, depending on $\n$ but not on $\tau$, such that
\begin{equation}
\label{eq:error2}
\left|\left(1-\left\lfloor \frac n{k_n}\right\rfloor\p\left(\A_n\right)\right)^{k_n}-\e^{-\theta_n\tau}\right|\leq C'' \e^{-\theta_n\tau}\left(\left|\tau-n\p\l U_n\r\right|+\frac {\tau^2}{k_n}\right).
\end{equation}

Finally, by Proposition~\ref{prop:relation-balls-annuli-general}  we have
\begin{align}
\label{eq:error3}
\left|\p(M_n\leq u_n)-\p\left(\mathscr W_{0,n}\left(\A_n\right)\right)\right|&\leq\sum_{j=1}^{q} \p\left(\mathscr W_{0,n}\left(\A_n\right)\cap T^{-n+j}(U_n\setminus \A_n)\right)\nonumber\\
&\leq q\p\l U_n\setminus \A_n\r.
\end{align}
Note that when $q=0$ both sides of inequality \eqref{eq:error3} equal 0. 

The estimate in Theorem~\ref{thm:error-terms-general-SP-no-clustering} follows from joining the estimates in \eqref{eq:error1}, \eqref{eq:error1.5}, \eqref{eq:error2} and \eqref{eq:error3}. 
\end{proof}

\begin{proof}[Proof of Corollary~\ref{cor:error-terms-general-SP-no-clustering}]
By \eqref{eq:exponential-estimate2}, we have that there exists $C>0$ such that
\begin{align}
\left|\e^{-\theta_n \tau}-\e^{-\theta \tau}\right|&\leq \e^{-\theta\tau}\left[|\theta_n-\theta|\tau+o(|\theta_n-\theta|)\right]\nonumber\\
&\leq C   \e^{-\theta\tau}|\theta_n-\theta|\tau \label{eq:teta_tetan}.
\end{align}

By Theorem~\ref{thm:error-terms-general-SP-no-clustering} and (\ref{eq:teta_tetan}), there exists $C'>0$ such that 
 \begin{align*}
\nonumber \big|\p(M_n\leq u_n)-\e^{-\theta_n\tau}\big|&\leq  C'\Bigg(k_nt_n\frac{\tau}n+n\gamma(q,n,t_n)+n\sum_{j=1}^{\lfloor n/k_n \rfloor-1} \p\l\A_n\cap T^{-j}\l\A_n\r\r\\
&\quad+ \e^{-\theta\tau}(1+C  |\theta_n-\theta|\tau)\left(\left|\tau-n\p\l U_n\r\right|+\frac {\tau^2}{k_n}\right)+q\p\l U_n\setminus \A_n\r\Bigg).
\end{align*}

So, there exists $C''>0$ such that
 \begin{align}
\nonumber \big|\p(M_n\leq u_n)-\e^{-\theta_n\tau}\big|&\leq  C''\Bigg(k_nt_n\frac{\tau}n+n\gamma(q,n,t_n)+n\sum_{j=1}^{\lfloor n/k_n \rfloor-1} \p\l\A_n\cap T^{-j}\l\A_n\r\r\\
&\qquad \quad+ \e^{-\theta\tau}\left(\left|\tau-n\p\l U_n\r\right|+\frac {\tau^2}{k_n}\right)+q\p\l U_n\setminus \A_n\r\Bigg). \label{eq:interm}
\end{align}

The result follows from   (\ref{eq:teta_tetan}) and  (\ref{eq:interm}).

\end{proof}

\section{Sharper estimates under stronger assumptions on the system}
\label{sec:L1 and struc}

In this section we make stronger assumptions on our system in order to obtain better error estimates.  Firstly we make an important mixing assumption and secondly we ask for a bit more geometric structure around our points of interest. 

Take a system  $(\X,\mathcal
B,\p,f)$, where $\X$ is a Riemannian manifold, $\mathcal B$ is the Borel sigma-algebra,  $f:\X\to\X$ is a measurable map and $\p$ an $f$-invariant probability measure.  

\subsection{Decay of correlations against $L^1$}

 Our basic assumption will be decay of correlations against $L^1$ observables. The definition of this is as follows.
Let \( \mathcal C_{1}, \mathcal C_{2} \) denote Banach spaces of real valued measurable functions defined on \( \X \).
We denote the \emph{correlation} of non-zero functions $\phi\in \mathcal C_{1}$ and  \( \psi\in \mathcal C_{2} \) w.r.t.\ a measure $\p$ as
\[
\cv_\p(\phi,\psi,n):=\frac{1}{\|\phi\|_{\mathcal C_{1}}\|\psi\|_{\mathcal C_{2}}}
\left|\int \phi\, (\psi\circ f^n)\, \dif\p-\int  \phi\, \dif\p\int
\psi\, \dif\p\right|.
\]

We say that we have \emph{decay
of correlations}, w.r.t.\ the measure $\p$, for observables in $\mathcal C_1$ \emph{against}
observables in $\mathcal C_2$, if there exists a non-increasing rate function $\gamma:\N\to \R$, with $\lim_{n\to\infty}\gamma(n)=0$, such that, for every $\phi\in\mathcal C_1$ and every
$\psi\in\mathcal C_2$, we have
 $$\cv_\p(\phi,\psi,n)\leq \gamma(n). 
 $$

We say that we have \emph{decay of correlations against $L^1$
observables} whenever  this holds for $\mathcal C_2=L^1(\p)$  and
$\|\psi\|_{\mathcal C_{2}}=\|\psi\|_1=\int |\psi|\,\dif\p$.

\begin{remark}
\label{rem:decay-L1}
Examples of systems for which we have decay of correlations against $L^1$ observables include: non-uniformly expanding maps of the interval, like the ones considered by Rychlik in \cite{R83}, for which $\mathcal C_1$ is the space of functions of bounded variation (see Section~\ref{sec:Rych} for more details); and non-uniformly expanding maps in higher dimensions, like the ones studied by Saussol in \cite{S00}, for which $\mathcal C_1$ is the space of functions with bounded quasi-H\"older norm. 
\end{remark}

\begin{remark}
We remark that, in most situations, decay of correlations against $L^1$ observables is a consequence of the existence of a gap in the spectrum of the map's corresponding Perron-Frobenius operator. However, in  \cite{D98} Dolgopyat proves exponential decay of correlations for certain Axiom A flows but along the way he proves it for semiflows against $L^1$ observables. This is done via estimates on families of twisted transfer operators for the Poincar\'e map, but without considering the Perron-Frobenius operator for the flow itself. This means that the discretisation of this flow by using a time-1 map provides an example of a system with decay of correlations against $L^1$ for which it is not known if there exists a spectral gap of the corresponding Perron-Frobenius operator. 
As we have said, existence of a spectral gap for the map's Perron-Frobenius operator, defined in some nice function space, appears to be a stronger property than decay of correlations against $L^1$ observables.
However, the latter is still a very strong property. In fact, from decay of correlations against $L^1$ observables, regardless of the rate, as long as it is summable, one can  actually  show that the system has exponential decay of correlations of H\"older observables against $L^\infty$. (See \cite[Theorem~B]{AFLV11}).
\end{remark}

\begin{remark}
The results below assume that the system has decay of correlations against $L^1$ observables. However, we do not need this assumption in full strength since we do not need it to hold for \emph{all} $L^1$ observables. Namely, we only need that for all $\phi\in\mathcal C_1$, where $\mathcal C_1$ is some Banach space of real valued functions, and all $\psi$ of the form $\psi=\I_A$, for some $A\in\mathcal B$, we have that $\gamma(n):=\cv(\phi,\psi,n)$ is such that $n^2\gamma(n)\xrightarrow[n\to\infty]{}0$, where (and this is a crucial point) the $\|\cdot\|_{\mathcal C_2}$ appearing in $\cv(\phi,\psi,n)$ is the $L^1(\p)$-norm, \ie   $\|\psi\|_{\mathcal C_2}=\|\I_A\|_1=\p(A)$.
\end{remark}

\begin{remark}
The basic assumption to be used below is decay of correlations against $L^1$ observables. Under this assumption, as in \cite{AFV12}, conditions $\D(u_n)$ and $\D'(u_n)$ can be shown to hold for observable functions for which conditions \ref{item:U-ball} and \ref{item:repeller} (when $q\neq0$), also described below, hold. To obtain sharper results we need to elaborate further in order to control the error terms better. This means we will not use these conditions below.
\end{remark}

\begin{remark}
\label{rem:D-dc}
One of the most important advantages of condition $\D(u_n)$ when compared to the original condition $D(u_n)$ introduced by Leadbetter is the fact that $\D(u_n)$ follows easily from decay of correlations (not necessarily against $L^1$). In fact, if $\I_{A^{(q)}_n}\in \mathcal C_1$, it follows trivially by taking $\phi= \I_{A^{(q)}_n}$ and $\psi=\I_{\mathscr W_{0,\ell}(A^{(q)}_n)}$. See \cite[Section~5.1]{F13} for a discussion on the subject. This is the case when $\mathcal C_1$ is the space of functions of bounded variation or bounded quasi-H\"older norm used in \cite{S00}. Moreover, even when we consider $\mathcal C_1$ to be the space of H\"older continuous functions, in which case $\I_{A^{(q)}_n}\notin \mathcal C_1$, condition $\D(u_n)$ can still be proved using a suitable H\"older continuous approximation for $\I_{A^{(q)}_n}$, as in  \cite[Proposition~5.2]{F13}. Now, observe that the original condition $D(u_n)$ does not follow from decay of correlations, even in the most simple cases, such as when we consider the doubling map $f:[0,1]\to[0,1]$, given by $f(x)=2x \quad\mbox{mod } 1$. In this case we have decay of correlations against $L^1(\p)$, where $\p$ can be taken as Lebesgue measure and $\mathcal C_1$ is the space of functions of bounded variation. For simplicity, assume that $\{X_0>u_n\}$ corresponds to a small (for large $n$) interval around $\zeta=0$. Then the event $\{X_{j}\leq u_n\}$ is made out of $2^{j+1}$ disjoint intervals, which means that $\|\I_{ X_{j}\leq u_n}\|_{\mathcal C_1}>2^{j+2}$. This precludes any possibility of proving $D(u_n)$ because $\alpha(n,t)$ needs to be uniform, regardless of the number of $p$ random variables taken on the left block, which means that $j$ could be arbitrarily large and cannot be cancelled out by the rate of decay of correlations $\gamma$.
\end{remark}

\subsection{Some geometric structure}
\label{ssec:geom}

Suppose that the time series $X_0, X_1,\ldots$ arises from such a system simply by evaluating a given  observable $\varphi:\X\to\R\cup\{\pm\infty\}$ along the orbits of the system, or in other words, the time evolution given by successive iterations by $f$:
\begin{equation}
\label{eq:def-stat-stoch-proc-DS} X_n=\varphi\circ f^n,\quad \mbox{for
each } n\in {\mathbb N}.
\end{equation}
Clearly, $X_0, X_1,\ldots$ defined in this way is not an independent sequence.  However, $f$-invariance of $\p$ guarantees that this stochastic process is stationary.

We recall that $\zeta$ is said to be a periodic point of prime period $p$, if $f^p(\zeta)=\zeta$ and for all $j=1, \ldots, p-1$ we have $f^j(\zeta)\neq \zeta$. 

We suppose that the r.v. $\varphi:\X\to\R\cup\{\pm\infty\}$
achieves a global maximum at $\zeta\in \X$ (we allow
$\varphi(\zeta)=+\infty$). 
We assume that $\varphi$ and $\p$ are sufficiently regular so that:

\begin{enumerate}

\item[\namedlabel{item:U-ball}{(R1)}] 
for $u$ sufficiently close to $u_F:=\varphi(\zeta)$,  the event 
\begin{equation*}
\label{def:U}
U(u):=\{x\in\X:\; \varphi(x)>u\}=\{X_0>u\}
\end{equation*} is homeomorphic to an open ball centred at $\zeta$. Moreover, the quantity $\p(U(u))$, as a function of $u$, varies continuously on a neighbourhood of $u_F$.

\item[\namedlabel{item:repeller}{(R2)}]  
 Whenever we say that $\zeta\in \X$ is a periodic point, of prime period 
$p\in\N$, then we assume that the periodicity of $\zeta$ implies that for all large $u$, $\{X_0>u\}\cap f^{-p}(\{X_0>u\})\neq\emptyset$ and the fact that the prime period is $p$ implies that $\{X_0>u\}\cap f^{-j}(\{X_0>u\})=\emptyset$ for all $j=1,\ldots,p-1$. Moreover, we also assume that 
  $\zeta$ is \emph{repelling}, which from the topological side means that $\cap_{k\ge 0} f^{-kp}(U(u))=\{\zeta\}$ for all large $u$, and from a metric side means that there exists $0<\theta<1$ such that 
 $$\p\left(\{X_0>u\}\cap f^{-p}(\{X_0>u\})\right)\sim(1-\theta)\p(X_0>u),$$ for all $u$ sufficiently large. 

\end{enumerate}

\begin{remark}
\label{rem:EI-existence}
Note that under condition \ref{item:repeller} the limit in \eqref{eq:OBrien-EI} (and later, that in \eqref{eq:def-EI}) always exists. Also observe that the assumption of decay of correlations against $L^1$ precludes the existence of periodic points that are not repelling. Hence, for such systems, as long as the measure is nicely behaved (such as measures with the Gibbs property) then condition \ref{item:U-ball} holds and at periodic points we can show (see \cite[Lemma~3.1]{FFT12}) that condition \ref{item:repeller}  holds as well, which ultimately imply the existence of an EI. Also, see Section~\ref{ssec:cty} below.
\end{remark}

We are interested in studying the extremal behaviour of the stochastic process $X_0, X_1,\ldots$ which is tied with the occurrence of exceedances of high levels $u$. The occurrence of an exceedance at time $j\in\N_0$ means that the event $\{X_j>u\}$ occurs, where $u$ is close to $u_F$. Observe that a realisation of the stochastic process $X_0, X_1,\ldots$ is achieved if we pick, at random and according to the measure $\p$, a point $x\in\X$, compute its orbit and evaluate $\varphi$ along it. Then saying that an exceedance occurs at time $j$ means that the orbit of the point $x$ hits the ball $U(u)$ at time $j$, \ie $f^j(x)\in U(u)$.

\subsection{Main results}
\label{subsec:main-results}

Next we give sharper error terms for the distributional limit of the partial maximum and of the first hitting time, which have both to do with rare events corresponding to entrances in shrinking balls around a point $\zeta$. The basic assumption is decay of correlations against $L^1$ observables. 

These results relate with the main result from \cite{A04}, where sharp error terms of this type were obtained for the first hitting time of $\psi$ and $\phi$ mixing processes, arising from shift dynamics over a finite alphabet, and where the hitting targets were cylinders rather than balls.

In \cite{K12}, a similar result to that of \cite{A04} was obtained using a very powerful technique developed in \cite{KL09}, giving essentially the same estimates, in the more general context of balls around some $\zeta$ (rather than cylinders), for systems with a spectral gap for the corresponding Perron-Frobenius operator.  We  comment more on the comparisons between these works and ours in Remark~\ref{rmk:comparison to Ab and Kel}.  

As we have seen, from \cite{FFT10,FFT11}, the existence of EVLs and HTS are two sides of the same coin. In here, we present the error terms in these two different contexts. On one hand, our tools are developed on EVL context which makes it natural to write Theorem~\ref{thm:sharp-error-EVL} below. However, the normalising sequences on the EVL context are traditionally designed in such a way that they implicitly define a relation between the radii of the target balls around $\zeta$ and the time scale $\tau$. On the other hand, in the HTS context, this intrinsic relation does not exist, which leads to complications in the proof, but then allows us to apply our results to obtain estimates for the escape rate in Corollary~\ref{cor:zero hole}: hence, we also state Theorem~\ref{thm:sharp-error-HTS}.

\subsubsection{Improved error terms for EVLs}
\label{subsubsec:EVL err}

Next we present our main result of this section in the context of EVLs, which provides sharper error terms for the convergence in distribution of the partial maximum of stochastic processes as defined in \eqref{eq:def-stat-stoch-proc-DS}.

\begin{theorem}
\label{thm:sharp-error-EVL}
Assume that the system has decay of correlations of observables in a Banach space $\mathcal C$ against observables in $ L^1$ with rate function $\gamma:\N\to \R$ and there exists $\delta>0$ such that 
$n^{1+\delta}\gamma(n)\to 0$, as $n\to\infty$. Let $u_n$ be as in \eqref{eq:un}.    Assume that there exists $q\in\N_0$ such that
\begin{equation*}
\label{eq:q-def EVL}
q:=\min\left\{j\in\N_0: \lim_{n\to\infty}R(A_n^{(j)})=\infty\right\}.
\end{equation*}
For each $n\in\N$, let $A_n:=\A_n$, $R_n:=R(\A_n)$, where $R$ is defined as in \eqref{eq:first-return}, and  let $k_n, t_n$ be such that 
\begin{multline*}k_nt_n\p(A_n)+n\gamma(t_n)\left(1+\frac{n\p(A_n)}{k_n}\right)+\frac{(n\p(A_n))^2}{k_n}\\=\inf_{k, t\in\N: k t<n}\left\{kt\p(A_n)+n \gamma(t)\left(1+\frac{n\p(A_n)}{k}\right)+\frac{(n\p(A_n))^2}{k}\right\}.
\end{multline*}
Assume that $\I_{A_n}\in \mathcal C$ and there exists $M>0$ such that  $\|\I_{A_n}\|_\mathcal C\leq M$ for all $n\in\N$.

Then there exists $C>0$ such that for $n$ large, 
\begin{multline*}
\left|\p(M_n\leq u_n)-\e^{-\theta \tau}\right|\leq C\e^{-\theta\tau}\Bigg(\left|\theta\tau-n\p(A_n)\right|+ k_nt_n\frac{\theta\tau} n+n\gamma(t_n)\left(1+\frac{\theta\tau}{k_n}\right)+\frac{(\theta\tau)^2}{k_n}\\+\theta\tau\sum_{j=R_n}^{\ell_n-1}\gamma(j)\Bigg),
\end{multline*}
where $\ell_n=\lfloor n/k_n\rfloor-t_n$ and the EI $\theta$ is given by equation \eqref{eq:OBrien-EI}.
\end{theorem}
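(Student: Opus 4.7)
The plan is to mimic the blocking-argument proof of Theorem~\ref{thm:error-terms-general-SP-no-clustering}, but carry out each step in a way that (i) exploits decay of correlations against $L^1$ to sharpen the two mixing-type errors and (ii) retains the geometric factor $(1-\ell_n\p(A_n))^{k_n-1-i}$ in the telescoping sum so that the whole bound carries a prefactor $\e^{-\theta\tau}$.

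First I would apply Proposition~\ref{prop:relation-balls-annuli-general} with $B=U_n$ and $A=A_n:=\A_n$ to reduce to estimating $|\p(\mathscr W_{0,n}(A_n))-\e^{-\theta\tau}|$; the resulting error $q\,\p(U_n\setminus A_n)=q(1-\theta_n)\p(U_n)$ can be absorbed into $|\theta\tau-n\p(A_n)|$ via the identity $n\p(A_n)=n\theta_n\p(U_n)$ together with \eqref{eq:un}. I would then iterate the one-block peel of Lemma~\ref{lem:inductive-step-1}, but keep the factor $(1-\ell_n\p(A_n))^{k_n-1-i}$ in the recursion, obtaining
\[
|\p(\mathscr W_{0,k_n(\ell_n+t_n)}(A_n))-(1-\ell_n\p(A_n))^{k_n}|\le \sum_{i=0}^{k_n-1}(1-\ell_n\p(A_n))^{k_n-1-i}\Upsilon_i,
\]
where each $\Upsilon_i$ splits into a gap error $t_n\p(A_n)$, a long-range mixing error $|\ell_n\p(A_n)\p(\mathscr W_{0,i(\ell_n+t_n)})-\sum_{j=0}^{\ell_n-1}\p(A_n\cap\mathscr W_{\ell_n+t_n-j,\,i(\ell_n+t_n)})|$, and a short-recurrence error $2\ell_n\sum_{j=1}^{\ell_n-1}\p(A_n\cap T^{-j}A_n)$.

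Each of the three pieces is then bounded using decay of correlations against $L^1$, taking $\phi=\I_{A_n}\in\mathcal C$ (with $\|\I_{A_n}\|_\mathcal C\le M$) and $\psi$ equal to either $\I_{\mathscr W}$ or $\I_{A_n}$, whose $L^1$-norm is the corresponding $\p$-mass. For the long-range mixing term the separation between $A_n$ and $\mathscr W_{\ell_n+t_n-j,\cdot}$ is at least $t_n+1$, so decay of correlations and monotonicity of $\gamma$ give a per-block bound of $M\p(\mathscr W_{0,i(\ell_n+t_n)})\,\ell_n\,\gamma(t_n)$; summing this with the geometric factor and the submultiplicative bound $\p(\mathscr W_{0,i(\ell_n+t_n)})\lesssim(1-\ell_n\p(A_n))^i$ (itself coming from the same induction) produces the contribution $\e^{-\theta\tau}n\gamma(t_n)(1+\theta\tau/k_n)$. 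For the short-recurrence term, the hypothesis $R_n\to\infty$ makes $A_n\cap T^{-j}A_n=\emptyset$ for $j<R_n$, while for $j\ge R_n$ decay of correlations yields $\p(A_n\cap T^{-j}A_n)\le \p(A_n)^2+M\p(A_n)\gamma(j)$; after multiplying by $n$ and summing with the geometric factor, these produce $\e^{-\theta\tau}(\theta\tau)^2/k_n$ and $\e^{-\theta\tau}\theta\tau\sum_{j=R_n}^{\ell_n-1}\gamma(j)$. The remaining gap error summed without the geometric factor contributes the term $k_nt_n\theta\tau/n$.

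To finish I would compare $(1-\ell_n\p(A_n))^{k_n}$ with $\e^{-\theta\tau}$ using the expansions \eqref{eq:exponential-estimate} and \eqref{eq:exponential-estimate2}, which introduce the last two contributions $\e^{-\theta\tau}|\theta\tau-n\p(A_n)|$ and $\e^{-\theta\tau}(\theta\tau)^2/k_n$. The main obstacle I foresee is precisely the bookkeeping needed to preserve the $\e^{-\theta\tau}$ prefactor: applying Proposition~\ref{prop:main-estimate-1} as a black box discards the geometric factors in the recursion and yields the weaker bound of Theorem~\ref{thm:error-terms-general-SP-no-clustering}, so one has to redo the induction, interleaving the error estimate with the submultiplicative bound on $\p(\mathscr W_{0,m}(A_n))$, and then carefully sum the resulting series over blocks. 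Everything else is essentially routine given the $L^1$-decay hypothesis and the uniform bound $\|\I_{A_n}\|_{\mathcal C}\le M$.
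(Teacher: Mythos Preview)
Your overall strategy is right and aligns with the paper's, but there is a systematic gap: to obtain the $\e^{-\theta\tau}$ prefactor on \emph{every} error term (which is essential, since $C$ must be independent of $\tau$), each per-block error must itself carry a factor $\p(\mathscr W_{0,i(\ell_n+t_n)})$, not just the long-range mixing piece. You do this correctly for the mixing term, but not for the gap error $t_n\p(A_n)$, the short-recurrence error $2\ell_n\sum_j\p(A_n\cap T^{-j}A_n)$, or the annulus error $q\,\p(U_n\setminus A_n)$ coming from Proposition~\ref{prop:relation-balls-annuli-general}. Summing a constant per-block error against the telescoping weights $(1-\ell_n\p(A_n))^{k_n-1-i}$ alone gives $\sum_i L_n^{k_n-1-i}\le 1/(\ell_n\p(A_n))\sim k_n/(\theta\tau)$, which \emph{removes} the exponential rather than producing $\e^{-\theta\tau}$. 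In particular, your claim that $q\,\p(U_n\setminus A_n)$ ``can be absorbed into $|\theta\tau-n\p(A_n)|$'' is false: the latter may vanish while the former is of exact order $\tau/n$, and a bare $\tau/n$ cannot be hidden inside $C\e^{-\theta\tau}(\ldots)$ uniformly in $\tau$.

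The paper closes this gap by first proving $L^1$-enhanced versions of Lemmas~\ref{lem:time-gap-1} and~\ref{lem:inductive-step-1} (Lemmas~\ref{lem:time-gap} and~\ref{lem:inductive-step}) whose right-hand sides are of the form $(\text{per-block error})\cdot\p(B)$, where $B$ is the event on the remaining blocks; this is precisely where decay against $L^1$ (as opposed to $L^\infty$) enters, since it is the $L^1$-norm $\|\I_B\|_1=\p(B)$ that supplies the extra factor. It then proves, as a \emph{separate} induction (Lemma~\ref{lem:crucial-estimate}), the upper bound $\p\bigl(\bigcap_{j<i}\mathscr W_{j(\ell+t),\ell}\cap f^{-i(\ell+t)}(D)\bigr)\le(L+\Xi_{A,\ell})^i\p(D)$, which both resolves the circularity you anticipate in the ``submultiplicative bound'' and handles the annulus term $I_n$ by taking $D$ to be a preimage of $U_n\setminus A_n$. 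Only with these two ingredients does the telescoping sum in Lemma~\ref{lem:main-thing} collapse to $k_n\Upsilon_{A_n}(L_n+\Upsilon_{A_n})^{k_n-1}\sim k_n\Upsilon_{A_n}\,\e^{-\theta\tau}$, after which Corollary~\ref{cor:extra-assumption} and the expansions \eqref{eq:exponential-estimate}--\eqref{eq:exponential-estimate2} finish the proof as you outline.
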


\begin{remark}
Note that condition \ref{item:U-ball} guarantees the existence of sequences $(u_n)_{n\in\N}$ satisfying \eqref{eq:un} (see \cite{FFT11} for details). As mentioned in Remark~\ref{rem:EI-existence}, condition \ref{item:repeller} guarantees the existence of the limit in \eqref{eq:OBrien-EI} that ultimately allows us to define the EI. Moreover, the fact that the observable $\varphi$ achieves a global maximum at $\zeta$ together with \ref{item:U-ball} and \ref{item:repeller}, allows us to show that $q$ appearing in Theorem~\ref{thm:sharp-error-EVL}  and Theorem~\ref{thm:sharp-error-HTS} is well defined (see \cite[Lemma~3.1]{AFV12} and \cite[Proof of Theorem 2]{FFT13}). See also Remarks~\ref{rem:application-continuous} and \ref{rem:discontinuous}.

\end{remark}

\begin{remark}
\label{rem:sequences}
Note that the existence of $\delta>0$ such that $n^{1+\delta}\gamma(n)\to 0$, as $n\to\infty$, allows us to define sequences given by $t'_n=n^{\frac{1}{1+\delta}}$ and $k'_n=n^{\frac{\delta}{2+2\delta}}$, which are such that 
$$
\frac{k'_nt'_n}{n}+n\gamma(t'_n)+\frac{1}{k'_n}\leq n^{-\frac{\delta}{2+2\delta}}+n\gamma(n^{\frac{1}{1+\delta}})+ n^{-\frac{\delta}{2+2\delta}}\xrightarrow[n\to\infty]{}0,
$$ 
since, by assumption, we have $\lim_{n\to\infty}n\gamma(n^{\frac{1}{1+\delta}})=0$.
Hence, the optimal choice of $k_n$ and $t_n$ guarantees that, under the hypothesis of Theorem~\ref{thm:sharp-error-EVL}, we must have:  
$$k_nt_n\p(A_n)+n\gamma(t_n)\left(1+\frac{n\p(A_n)}{k_n}\right)+(n\p(A_n))^2/k_n\to0 \qquad \text{ as } n\to\infty,$$
which, in particular, implies that: 
	\begin{align}
	&k_n, t_n\to \infty \quad\mbox{as $n\to\infty$};\nonumber\\
	&k_nt_n=\o(n).\label{eq:tn-kn-relation}
	\end{align}
If we define a priori rates for the above divergences/convergences along with the rate of growth of $R_n$, then the constant $C$ in the theorem depends only on those rates, on $M$ and on a bound on the deviation from the limiting constants in \eqref{eq:un} and \ref{item:repeller}.  That is to say that to apply this theorem, we don't need to use the optimal sequences $(k_n)_{n\in\N}, (t_n)_{n\in\N}$, so long as the conditions above hold.  Note that $C$ can be taken independently of $\tau$.
	
For example, 
taking $k'_n$ and $t'_n$ as above, we have
\begin{align*}
\left|\p(M_n\leq u_n)-\e^{-\theta \tau}\right|&\leq C\e^{-\theta\tau}\left(k'_nt'_n\frac{\tau} n+n\gamma(t'_n)\left(1+\frac{\tau}{k_n'}\right)+\frac{\tau^2}{k'_n}+\tau\sum_{j=R_n}^{\ell_n-1}\gamma(j)\right)\\
&\leq  C\e^{-\theta\tau}\left(n^{-\frac{\delta}{2+2\delta}}(\tau+\tau^2)+ n\gamma(n^{\frac{1}{1+\delta}})(1+\tau n^{-\frac{\delta}{2+2\delta}})+ \frac{R_n^{-(1+\delta)}\kappa\tau}{1+\delta}\right).
 \end{align*}
\end{remark}

For comments on the existence of $q$ as in the theorem and on the existence of the limit in \eqref{eq:OBrien-EI}, see Remark~\ref{rem:application-continuous} below.

\subsubsection{Improved error terms for HTS}

Here we prove a result analogous to Theorem~\ref{thm:sharp-error-EVL}.  The key differences are that it applies to HTS rather than EVLs, it is more directly applicable to balls of general diameter, and that it explicitly decouples the time scale $\tau$ from the radius of the ball $\eps$. 

In what follows $B_\eps(\zeta)$ denotes the open ball of radius $\eps$, around the point $\zeta\in\X$, w.r.t. a given metric on $\X$. Also set $A_\eps^{(0)}(\zeta):=B_\eps(\zeta)$ and, for each $q\in\N$, let
$$
\A_\eps(\zeta):=B_\eps(\zeta)\cap\bigcap_{i=1}^{q}f^{-i}((B_\eps(\zeta))^c).
$$

\begin{theorem}
\label{thm:sharp-error-HTS}
Assume that the system has decay of correlations of observables in a Banach space $\mathcal C$ against observables in $ L^1$ with rate function $\gamma:\N\to \R$ and there exists $\delta>0$ such that
$n^{1+\delta}\gamma(n)\to 0$, as $n\to\infty$. Fix some point $\zeta\in\X$ and assume that there exists $q\in\N_0$ such that 
\begin{equation*}
\label{eq:q-def HTS}
q:=\min\left\{j\in\N_0: \lim_{\eps\to0}R(A_\eps^{(j)}(\zeta))=\infty\right\}.
\end{equation*}
 For each $\eps>0$, let $B_\eps:=B_\eps(\zeta)$, $A_\eps:=\A_\eps(\zeta)$, $R_\eps:=R(\A_\eps(\zeta))$, where $R$ is defined as in \eqref{eq:first-return}, and $k_\eps, t_\eps\in\N$ be such that 
$$k_\eps t_\eps\p(B_\eps)+\frac{\gamma(t_\eps)}{\p(B_\eps)}+\frac{1}{k_\eps}=\inf_{k, t\in\N: kt<\p(B_\eps)^{-1}}\left\{k t\p(B_\eps)+\frac{\gamma(t)}{\p(B_\eps)}+\frac{1}{k}\right\}.$$
Let  $\ell_\eps=\lfloor\lfloor\p(B_\eps)^{-1}\rfloor/k_\eps\rfloor-t_\eps$ and $L_\eps:=1-\ell_\eps\p(A_\eps)$. Assume that $\I_{A_\eps}\in\mathcal C$ and there exists $M>0$ such that  $\|\I_{A_\eps}\|_\mathcal C\leq M$ for all $\eps>0$. 

Then there exists $C>0$, depending on $\eps$ but not on $\tau$, such that for all $\eps>0$ small enough and all $\tau>0$ 
\begin{align*}
\Big|\p\left(r_{B_\eps(\zeta)}>\frac \tau{\p(B_\eps)}\right)-\e^{-\theta \tau}\Big|\leq C \left(\tau^2\alpha_\eps\Gamma_\eps+\frac {\tau^2}{k_\eps}\Gamma_\eps+\frac {\tau^3}{k_\eps}\alpha_\eps\Gamma_\eps\right) \e^{ -(\theta- k_\eps\Upsilon_{A_\eps}L_\eps^{-1})\tau},
\end{align*}
where $\Gamma_\eps=\left(k_\eps t_\eps\p(A_\eps)+(\p(B_\eps))^{-1}\gamma(t_\eps)+ k_\eps^{-1}+\sum_{j=R_\eps}^{\ell_\eps-1}\gamma(j)\right)$, $\alpha_\eps=|\theta-\frac{\p(A_\eps)}{\p(B_\eps)}+t_\eps k_\eps\p(A_\eps)|$, $\Upsilon_{A_\eps}$ is given as in Lemma~\ref{lem:main-thing} 
and
 \begin{equation}
\label{eq:def-EI}
\theta=\lim_{\eps\to0}\frac{\p(A_\eps)}{\p(B_\eps)}.
\end{equation}
\end{theorem}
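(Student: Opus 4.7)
I would follow the template of the proof of Theorem~\ref{thm:sharp-error-EVL}, the essential new difficulty being that the time horizon is $n:=\lfloor\tau/\p(B_\eps)\rfloor$, which depends on the free parameter $\tau$ rather than being tied to the length scale. By stationarity of $\p$ and \eqref{eq:EVL-HTS}, the target probability $\p(r_{B_\eps}>n)$ equals $\p(\mathscr W_{0,n}(B_\eps))$. The first move is to apply Proposition~\ref{prop:relation-balls-annuli-general} to replace $B_\eps$ by $A_\eps$ at a cost of at most $q\,\p(B_\eps\setminus A_\eps)$, a contribution that ultimately gets absorbed into the correction $\alpha_\eps$ via Lemma~\ref{lem:main-thing}.

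\textbf{Blocking.} Next I would apply Proposition~\ref{prop:main-estimate-1} with $A=A_\eps$, block length $\ell_\eps$, gap $t_\eps$, and number of blocks $k:=\lceil k_\eps\tau\rceil$, chosen so that $k(\ell_\eps+t_\eps)\geq n$. This $\tau$-dependent choice propagates into a factor of $\tau$ in every per-block error, which is the single most important difference from the EVL case. The proposition bounds the distance from $(1-\ell_\eps\p(A_\eps))^{k_\eps\tau}$ by three contributions: a gap cost of order $k_\eps\tau\,t_\eps\p(A_\eps)$; a short-return cost $n\sum_{j=1}^{\ell_\eps-1}\p(A_\eps\cap f^{-j}(A_\eps))$, which by the very definition of $R_\eps$ reduces to the tail $j\geq R_\eps$ and, by decay of correlations against $L^1$ applied with $\phi=\I_{A_\eps}\in\mathcal C$ and $\psi=\I_{A_\eps}$, is bounded by $n(\ell_\eps\p(A_\eps)^2+M\p(A_\eps)\sum_{j=R_\eps}^{\ell_\eps-1}\gamma(j))$; and a mixing cost, for which each summand in Proposition~\ref{prop:main-estimate-1} is at most $M\gamma(t_\eps)$ by decay of correlations (with $\phi=\I_{A_\eps}$ and $\psi$ the indicator of the relevant $\mathscr W$-event), giving a total of order $n\gamma(t_\eps)$. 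Collecting these under $n\approx\tau/\p(B_\eps)$ produces the bracket $\Gamma_\eps$ appearing in the statement.

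\textbf{Exponential comparison and Lemma~\ref{lem:main-thing}.} The Taylor expansions \eqref{eq:exponential-estimate} and \eqref{eq:exponential-estimate2} then turn $(1-\ell_\eps\p(A_\eps))^{k_\eps\tau}$ into $\e^{-k_\eps\ell_\eps\p(A_\eps)\tau}(1+O(\tau/k_\eps))$. Writing the exponent as $\theta\tau$ plus a defect, the raw mismatch $|k_\eps\ell_\eps\p(A_\eps)-\theta|$ is, up to negligible terms, $|\p(A_\eps)/\p(B_\eps)-\theta|+k_\eps t_\eps\p(A_\eps)$, which is exactly $\alpha_\eps$; the finer correction $k_\eps\Upsilon_{A_\eps}L_\eps^{-1}$ is extracted by Lemma~\ref{lem:main-thing}, which repackages part of the discrepancy as a controlled shift of the exponent, so that what remains multiplies $\e^{-(\theta-k_\eps\Upsilon_{A_\eps}L_\eps^{-1})\tau}$ with the precise $\tau$-polynomial shape $\tau^2\alpha_\eps\Gamma_\eps+\tau^2\Gamma_\eps/k_\eps+\tau^3\alpha_\eps\Gamma_\eps/k_\eps$.

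\textbf{Main obstacle.} The principal difficulty is that the number of blocks grows linearly in $\tau$: every per-block error is amplified by a factor of $\tau$, and the quadratic term in the expansion $\ln(1-x)=-x-x^2/2-\cdots$ contributes a $\tau^2/k_\eps$ term. This is why the final estimate carries the $\tau^2$ and $\tau^3$ factors that were absent from Theorem~\ref{thm:sharp-error-EVL}. Preventing these polynomial $\tau$-factors from swamping the exponential decay requires isolating the shift $k_\eps\Upsilon_{A_\eps}L_\eps^{-1}$ from the multiplicative error, which is precisely what Lemma~\ref{lem:main-thing} is designed to deliver and where the HTS argument genuinely diverges from its EVL counterpart; the rest is careful bookkeeping to ensure that the shifted exponent $\theta-k_\eps\Upsilon_{A_\eps}L_\eps^{-1}$ remains positive so that the weight $\e^{-(\theta-k_\eps\Upsilon_{A_\eps}L_\eps^{-1})\tau}$ is indeed a decaying prefactor.
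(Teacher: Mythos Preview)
Your high-level outline is sound, but the specific tools you invoke cannot deliver the exponential prefactor $\e^{-(\theta-k_\eps\Upsilon_{A_\eps}L_\eps^{-1})\tau}$, and without it the bound fails for large $\tau$.

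The main problem is your use of Proposition~\ref{prop:main-estimate-1}. That proposition produces error terms such as $2kt\p(A)$ and $2n\sum_j\p(A\cap f^{-j}(A))$ that carry no decay in the number of blocks. With $k\approx k_\eps\tau$ and $n\approx\tau/\p(B_\eps)$ these grow like $\tau\,\Gamma_\eps$, not like $\tau\,\Gamma_\eps\cdot\e^{-c\tau}$. You cannot then ``extract'' the exponential factor afterwards via Lemma~\ref{lem:main-thing}: that lemma is not a post-processing device but an \emph{alternative} blocking estimate, whose entire point is that its error already carries the factor $(L+\Upsilon_A)^{k-1}$. Accordingly the paper does not use Proposition~\ref{prop:main-estimate-1} here at all; it uses Corollary~\ref{cor:tau-included}, a $\tau$-aware extension of Lemma~\ref{lem:main-thing}, to obtain
\[
\big|\p(\mathscr W_{0,\tau n}(A_\eps))-L_\eps^{\lfloor\tau k_\eps\rfloor}\big|\leq(3+\Upsilon_{A_\eps})\lceil\tau k_\eps\rceil\,\Upsilon_{A_\eps}\,(L_\eps+\Upsilon_{A_\eps})^{\lfloor\tau k_\eps\rfloor-1},
\]
and only then writes $(L_\eps+\Upsilon_{A_\eps})^{\lfloor\tau k_\eps\rfloor}\leq L_\eps^{\lfloor\tau k_\eps\rfloor}\e^{\tau k_\eps\Upsilon_{A_\eps}L_\eps^{-1}}$ to produce the shifted exponent.

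The same issue arises in your $B_\eps\to A_\eps$ step. The crude bound $q\,\p(B_\eps\setminus A_\eps)\sim(1-\theta)\p(B_\eps)$ is constant in $\tau$ and cannot be absorbed into anything of the form $\tau^2\alpha_\eps\Gamma_\eps\,\e^{-c\tau}$. The paper keeps the sharper conclusion of Proposition~\ref{prop:relation-balls-annuli-general}, namely $\sum_{j=1}^q\p\big(\mathscr W_{0,\lfloor\tau n\rfloor}(A_\eps)\cap f^{-\lfloor\tau n\rfloor+j}(B_\eps\setminus A_\eps)\big)$, and then applies Lemma~\ref{lem:crucial-estimate} (with $D$ a suitable preimage of $B_\eps\setminus A_\eps$) to pull out the factor $(L_\eps+\Upsilon_{A_\eps})^{\lfloor\tau k_\eps\rfloor-1}$. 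You should also note that the regime $\tau<k_\eps^{-1}$, where $\lfloor\tau k_\eps\rfloor=0$ and the blocking degenerates, must be handled separately; the paper does this via the second clause of Corollary~\ref{cor:tau-included}.
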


\begin{remark}
\label{rem:application-continuous}
If $f$ is continuous, for example, and $\p$ is sufficiently regular so that the content of condition \ref{item:U-ball} holds for balls around all $\zeta\in\X$, then if $\zeta$ is not periodic then Theorem~\ref{thm:sharp-error-HTS} holds with $q=0$ and $\theta=1$. Moreover, if $\zeta$ is a repelling periodic point of prime period $p$, in the sense described in condition \ref{item:repeller}, then Theorem~\ref{thm:sharp-error-HTS} also applies with $q=p$ and $\theta$ is given as in \ref{item:repeller} by the backward contraction rate at $\zeta$. Recall that as discussed in Remark~\ref{rem:EI-existence} for systems with decay of correlations against $L^1$ and as long as the invariant measure is sufficiently regular (having the Gibbs property suffices) then the limits in the definition of the EI always exist.
\end{remark}

\begin{remark}
\label{rem:discontinuous}
Theorem~\ref{thm:sharp-error-HTS} can also be applied to discontinuity points $\zeta$ of $f$ as considered in \cite[Section~3.3]{AFV12} so that, if for example, there exist $p^-$ and $p^+$ such that $f^{p^+}(\zeta^+)=\zeta=f^{p^-}(\zeta^-)$ (see \cite[Section~3.3]{AFV12} for details) then $q=\max\{p^-,p^+\}$ and $\theta$ is given by the formulas in \cite[Proposition~3.4]{AFV12}.    
\end{remark}

\begin{remark}
Note that the existence of $\delta>0$ such that $n^{1+\delta}\gamma(n)\to 0$, as $n\to\infty$, allows us to find `sequences' $(t'_\eps)_\eps$ and $(k'_\eps)_\eps$, just as in Remark~\ref{rem:sequences},  such that  $\lim_{\eps\to0}k'_\eps t'_\eps\p(A_\eps)+\gamma(t'_\eps)/\p(B_\eps)+1/k'_\eps=0$. It follows that for the optimal `sequences' $(t_\eps)_\eps$ and $(k_\eps)_\eps$, we must have
$$k_\eps t_\eps\p(A_\eps)+\gamma(t_\eps)/\p(B_\eps)+1/k_\eps\to0, \qquad \text{ as } \eps\to0,$$
which, in particular, implies that: 
	\begin{align}
	&k_\eps, t_\eps\to \infty \quad\mbox{as $\eps\to0$};\nonumber\\
	&k_\eps t_\eps=\o(\p(B_\eps)^{-1})\label{eq:t_eps-k_eps-relation}.
	\end{align}

So, as in Remark~\ref{rem:sequences}, we can fix growth rates on the parameters in this theorem to show that the constant $C$ depends only on those rates.
\label{rmk:decay consts}
\end{remark}

\begin{remark}
\label{rem:error-terms}
Observe that the error terms are dominated by $\Gamma_\eps$ and to some extent by $\alpha_\eps$. There are four terms in $\Gamma_\eps$. The first and the third terms of $\Gamma_\eps$ result from the fact that we use a blocking argument and essentially take into account the balance between the size of $\p(A_\eps)$, the number of blocks and the size of the gaps between the blocks. As seen in Remark~\ref{rem:sequences}, this ultimately leads to an error that typically goes down like  $P(A_\epsilon)^\delta$ for some $0 < \delta < 1$. For the second term we need to balance the size of gaps between the blocks and the loss of memory of the system. Since, as observed above, decay of correlations against $L^1$ is a strong mixing property, then typically $\gamma$ decays exponentially fast which means that this term is usually negligible when compared to the others. The fourth term in $\Gamma_\eps$ accounts for the affect of fast recurrence from $A_\eps$ to $A_\eps$ itself. So, if $\gamma$ decays exponentially fast, then the fourth term should decay as $\gamma(R_\eps)$. The quantity $R_\eps$ has been studied in \cite{ACS00,AL13,AV08,HV10,STV02}, for example, and we generally expect it to behave like $-\log(\p(A_\eps))$, which means that the fourth term should also decay like a power of $\p(A_\eps)$. As a consequence the behaviour of $\Gamma_\eps$ should be ruled by a trade-off between the first, the third and the fourth terms of $\Gamma_\eps$.

Regarding $\alpha_\eps$, note that for a sufficiently regular dynamical system $f$ and invariant measure $\p$, the dominant term should be again the first term of $\Gamma_\eps$.
\end{remark}

\begin{remark}
We note that, when compared with the sharp estimates in \cite{A04,K12} in terms of $\tau$ we have a loss here from $\tau\e^{-\xi_{A_\eps}\tau}$ in \cite{A04} (or $\tau\e^{-\xi_\eps\tau}$ in \cite{K12}), to $\tau^3\e^{-(\theta-k_\eps\Upsilon_\eps)\tau}$, which is explained by the fact that we compute the error terms with respect to the asymptotic limit $\e^{-\theta\tau}$ with no correcting factors such as $\xi_{A_\eps}$, used in \cite{A04}, and $\xi_\eps$, in \cite{K12}. Observe that even in the ideal i.i.d.\ case, as noted in Remark~\ref{rem:rates-general-case}, when the error terms are computed with respect to the asymptotic limit ($\e^{-\tau}$, in this case) then an error term $\tau^2\e^{-\tau}$ already appears (see \cite[Theorem~2.4.2]{LLR83}). Moreover, the deeper analysis we perform here explains how  $\theta-k_\eps\Upsilon_\eps$ goes to $0$, as $\eps\to\infty$, which, as we have seen, depends on the fast recurrence of the point $\zeta$ to itself.
\label{rmk:comparison to Ab and Kel}
\end{remark}

\begin{remark}
In both Theorems~\ref{thm:sharp-error-EVL} and \ref{thm:sharp-error-HTS}, we assume, for simplicity, that $\I_{A_n}$ and $\I_{A_\eps}$ belong to $\mathcal C$. This is true in applications, such as those mentioned in Remark~\ref{rem:decay-L1}, in which the Banach space $\mathcal C$ is the space of functions of bounded variation or bounded quasi-H\"older norm considered in \cite{S00}. However, the statement of the theorems can still be proved even in cases when these assumptions do not hold. This is the case when one takes for $\mathcal C$ the space of H\"older continuous functions, for example. Nevertheless, using a suitable H\"older continuous approximation we can still recover the result. See discussion in \cite[Section~5.1]{F13} and in particular Proposition~5.2.

\end{remark}

\subsection{Escape rates in the zero-hole limit}
\label{ssec:zero-hole}

One way of studying the recurrence properties of a system is to fix a hole $B_\eps(\zeta)$ around some chosen point $\zeta$ and compute the rate of escape of mass through the hole, i.e., find the limit, if it exists,
$$-\lim_{t\to \infty}\frac1t\log\p\left(r_{B_\eps(\zeta)}>t\right).$$

Moreover, one can consider, as in \cite{KL09, FP12}, what happens when the size of the ball goes to zero too.  In those papers, it is shown that we should expect this to scale with $\p(B_\eps(\zeta))$.  Theorem~\ref{thm:sharp-error-HTS} gives error estimates independently of the time scale $\tau$, which yields the following corollary.

\begin{corollary}
Suppose that the system is as in Theorem~\ref{thm:sharp-error-HTS}.  Then
$$-\lim_{\eps\to0}\frac1{\p(B_\eps(\zeta))}\limsup_{t}\frac1t\log\p\left(r_{B_\eps(\zeta)}>t\right)\ge  \theta.$$
\label{cor:zero hole}
\end{corollary}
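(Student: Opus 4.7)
The strategy is to substitute $\tau := t\,\p(B_\eps(\zeta))$ into Theorem~\ref{thm:sharp-error-HTS} and take iterated limits, first sending $t\to\infty$ with $\eps$ fixed to extract an exponential decay rate for $\p(r_{B_\eps(\zeta)}>t)$, and only afterwards sending $\eps\to 0$.

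Fix a small $\eps>0$, write $B_\eps = B_\eps(\zeta)$, and set $\kappa_\eps := k_\eps\Upsilon_{A_\eps}L_\eps^{-1}$. For each $t>0$, setting $\tau_t:=t\,\p(B_\eps)$, Theorem~\ref{thm:sharp-error-HTS} together with the triangle inequality yields a constant $C_\eps>0$ (depending on $\eps$ but not on $t$) for which
\begin{equation*}
\p(r_{B_\eps}>t)\leq e^{-\theta\tau_t}+C_\eps\,Q_\eps(\tau_t)\,e^{-(\theta-\kappa_\eps)\tau_t},
\end{equation*}
where $Q_\eps(\tau) = \tau^2\alpha_\eps\Gamma_\eps + \tfrac{\tau^2}{k_\eps}\Gamma_\eps + \tfrac{\tau^3}{k_\eps}\alpha_\eps\Gamma_\eps$ is polynomial in $\tau$ of degree at most three, with coefficients depending only on $\eps$. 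If $\kappa_\eps\le 0$ the second term is dominated by the first; if $\kappa_\eps>0$ the second dominates for large $\tau_t$. In either case, since $Q_\eps$ grows only polynomially in $t$, taking logarithms and dividing by $t$ gives
\begin{equation*}
\limsup_{t\to\infty}\frac{1}{t}\log\p(r_{B_\eps}>t)\leq -(\theta-\kappa_\eps^+)\,\p(B_\eps),
\end{equation*}
where $\kappa_\eps^+ := \max(\kappa_\eps,0)$, so that
\begin{equation*}
-\frac{1}{\p(B_\eps)}\limsup_{t\to\infty}\frac{1}{t}\log\p(r_{B_\eps}>t)\geq \theta-\kappa_\eps^+.
\end{equation*}

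It remains to let $\eps\to 0$ and show $\kappa_\eps\to 0$. Two ingredients are needed: that $L_\eps\to 1$, and that $k_\eps\Upsilon_{A_\eps}\to 0$. The first follows from $\ell_\eps\p(A_\eps)\sim \theta/k_\eps\to 0$, a consequence of \eqref{eq:def-EI} together with the divergence $k_\eps\to\infty$ noted in Remark~\ref{rmk:decay consts}. The second follows from the explicit bound on $\Upsilon_{A_\eps}$ supplied by Lemma~\ref{lem:main-thing}, combined with the growth rate of $k_\eps$ dictated by the optimality condition on the blocking — this is precisely the content alluded to in the closing sentence of Remark~\ref{rem:error-terms}. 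Taking $\liminf_{\eps\to 0}$ in the last display thus delivers the claimed inequality.

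The main obstacle is the last step: confirming that the exponent correction $k_\eps\Upsilon_{A_\eps}L_\eps^{-1}$ vanishes as $\eps\to 0$. Once Theorem~\ref{thm:sharp-error-HTS} is granted, all remaining steps are soft, since polynomial prefactors contribute nothing to exponential decay rates and the $\eps$-dependence of $C_\eps$ is harmless when $t\to\infty$ is taken first. The delicate point is the compatibility between the quantitative bound on $\Upsilon_{A_\eps}$ from Lemma~\ref{lem:main-thing} and the divergence rate of $k_\eps$ forced by the optimal choice in the blocking argument — essentially the same balancing act used to prove Theorem~\ref{thm:sharp-error-HTS} itself.
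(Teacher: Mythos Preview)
Your proof is correct and follows essentially the same route as the paper's: apply Theorem~\ref{thm:sharp-error-HTS}, observe that the error term decays like a polynomial times $e^{-(\theta-\kappa_\eps)\tau}$, take $\limsup_{t\to\infty}\frac1t\log$ to extract the rate $-(\theta-\kappa_\eps)\p(B_\eps)$, and then send $\eps\to 0$ using $k_\eps\Upsilon_{A_\eps}\to 0$. The paper's version is terser and plugs in explicit test sequences $k_\eps,t_\eps$ to witness the vanishing of $\Gamma_\eps$ (and hence of $k_\eps\Upsilon_{A_\eps}$ via the bound \eqref{eq:Upsilon-estimate} in the proof of Theorem~\ref{thm:sharp-error-HTS}), whereas you appeal to the optimal choice and Remark~\ref{rmk:decay consts}; both are valid.
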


Note that for many dynamical systems it is known that the upper limit is also $\theta$, see for example the systems in the papers referenced above, but we were unable to prove this in the generality of the setting above.

\begin{proof}
In Theorem~\ref{thm:sharp-error-HTS}, the error estimate is  
\begin{align*}
\Big|\p\left(r_{B_\eps(\zeta)}>\frac \tau{\p(B_\eps)}\right)-\e^{-\theta \tau}\Big|\leq C \left(\tau^2\alpha_\eps\Gamma_\eps+\frac {\tau^2}{k_\eps}\Gamma_\eps+\frac {\tau^3}{k_\eps}\alpha_\eps\Gamma_\eps\right) \e^{ -(\theta- k_\eps\Upsilon_{A_\eps})\tau},
\end{align*}
where $\Gamma_\eps=\left(k_\eps t_\eps\p(A_\eps)+(\p(B_\eps))^{-1}\gamma(t_\eps)+ \frac1{k_\eps}+\sum_{j=R_\eps}^{\ell_\eps-1}\gamma(j)\right)$, $\alpha_\eps=|\theta-\frac{\p(A_\eps)}{\p(B_\eps)}+t_\eps k_\eps\p(A_\eps)|$ and
 \begin{equation*}
\theta=\lim_{\eps\to0}\frac{\p(A_\eps)}{\p(B_\eps(\zeta))}.
\end{equation*}

Recall that $n^{1+\delta}\gamma(n)\to 0$ as $n\to \infty$. 
Then assuming that $\delta\in (0,1)$ and setting $k_\eps=\frac1{\p(B_\eps)^{\frac{\delta}{2+2\delta}}}$ and $t_\eps=\frac1{\p(B_\eps)^{\frac{1}{1+\delta}}}$,

we deduce that
$$\Gamma_\eps\le \p(B_\eps)^{\frac{\delta}{2+2\delta}}\left(\frac{\p(A_\eps)}{\p(B_\eps)}+1\right)+\p(B_\eps)^{-1}\gamma\left(\p(B_\eps)^{-\frac{1}{1+\delta}}\right) + C R_\eps^{1+\delta}$$
for $C$ depending only on $\delta$
and 
$$\alpha_\eps\le \left|\theta-\frac{\p(A_\eps)}{\p(B_\eps)}\right|+\p(B_\eps)^{\frac{\delta}{2+2\delta}}\frac{\p(A_\eps)}{\p(B_\eps)}.$$

In particular,
$$\limsup_{\tau}\frac1\tau\log\p\left(r_{B_\eps(\zeta)}>\tau\right)\le -(\theta- k_\eps\Upsilon_{A_\eps}) \p(B_\eps(\zeta)).$$
Therefore
$$-\lim_{\eps\to0}\frac1{\p(B_\eps(\zeta))}\limsup_{\tau}\frac1\tau\log\p\left(r_{B_\eps(\zeta)}>\tau\right)\ge  \theta,$$
as required.
\end{proof}

\section{Preparatory lemmas}
\label{sec:Lemmata}

In this section, we give the estimates necessary to prove Theorems~\ref{thm:sharp-error-EVL} and \ref{thm:sharp-error-HTS}. As in previous such arguments, the core of the strategy is a blocking argument as described just before Lemma~\ref{lem:time-gap-1}. The estimates here are enhanced versions of Lemmas~\ref{lem:time-gap-1}, \ref{lem:inductive-step-1} and Proposition~\ref{prop:main-estimate-1}, obtained using the stronger assumption given by the decay of correlations against $L^1$ observables.

Next we give an estimate on the error resulting from neglecting the random variables in the gaps between the blocks.

\begin{lemma}
\label{lem:time-gap}Assume that the systems has decay of correlations against $L^1$ with a non-increasing rate function $\gamma:\N\to \R$ 
such that $\gamma(n)\to 0$, as $n\to\infty$.
For any fixed $A\in \mathcal B$ and positive integers $s,t,t',m$, with $t< m$, we have:
\begin{equation*}
\left|\p(\mathscr W_{0,s+t'+m}(A))-\p(\mathscr W_{0,s}(A)\cap \mathscr W_{s+t',m}(A))\right|\leq t'\left[\p(A)+\|\I_A\|_{\mathcal C}\,\gamma(t)\right]\p(\mathscr W_{0,m-t}(A)).
\end{equation*}
 \end{lemma}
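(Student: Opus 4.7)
The plan is to carry out a standard ``remove the middle'' argument and quantify the error using decay of correlations against $L^1$.

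First, I would note the obvious inclusion
\[
\mathscr W_{0,s+t'+m}(A)\subseteq \mathscr W_{0,s}(A)\cap \mathscr W_{s+t',m}(A),
\]
since imposing avoidance over a longer time window can only shrink the set. Thus the left-hand side of the claimed inequality equals the measure of the set difference, which consists precisely of those points that avoid $A$ on $[0,s)\cup[s+t',s+t'+m)$ but hit $A$ somewhere in the gap $[s,s+t')$. Using the union bound,
\[
\left|\p(\mathscr W_{0,s+t'+m}(A))-\p(\mathscr W_{0,s}(A)\cap \mathscr W_{s+t',m}(A))\right|\leq \sum_{j=s}^{s+t'-1}\p\bigl(T^{-j}(A)\cap \mathscr W_{s+t',m}(A)\bigr),
\]
where I already dropped the $\mathscr W_{0,s}(A)$ factor to make the decay-of-correlations step cleaner.

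Next, since $t<m$, I have the inclusion $\mathscr W_{s+t',m}(A)\subseteq \mathscr W_{s+t'+t,\,m-t}(A)$ (fewer avoidance constraints), so each term is bounded by $\p(T^{-j}(A)\cap T^{-(s+t'+t)}(\mathscr W_{0,m-t}(A)))$. By $T$-invariance of $\p$ (stationarity), shifting time by $-j$ turns this into
\[
\p\bigl(A\cap T^{-n}(\mathscr W_{0,m-t}(A))\bigr),\qquad n:=s+t'+t-j,
\]
and the constraint $j\in[s,s+t'-1]$ forces $n\geq t+1>t$.

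Now comes the key step: apply decay of correlations against $L^1$ with $\phi=\I_A\in\mathcal C$ and $\psi=\I_{\mathscr W_{0,m-t}(A)}\in L^1(\p)$ at time lag $n$. Using that $\gamma$ is non-increasing, so $\gamma(n)\leq\gamma(t)$, this yields
\[
\p\bigl(A\cap T^{-n}(\mathscr W_{0,m-t}(A))\bigr)\leq \p(A)\,\p(\mathscr W_{0,m-t}(A))+\|\I_A\|_{\mathcal C}\,\gamma(t)\,\p(\mathscr W_{0,m-t}(A)).
\]
Summing this bound over the $t'$ values $j=s,\ldots,s+t'-1$ immediately gives the desired inequality. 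No step here should present a real obstacle; the only subtlety is keeping track that the shift $n=s+t'+t-j$ is at least $t$ (so that $\gamma(n)\leq\gamma(t)$), which is precisely why the extra gap $t$ was built into the replacement $\mathscr W_{s+t',m}(A)\subseteq\mathscr W_{s+t'+t,m-t}(A)$.
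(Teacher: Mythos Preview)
Your argument is correct and follows essentially the same path as the paper's proof: both identify the difference as the measure of the set hitting $A$ in the gap, drop the first avoidance window, shrink $\mathscr W_{s+t',m}(A)$ to $\mathscr W_{s+t'+t,m-t}(A)$ to create a time gap of at least $t$, apply the union bound and stationarity, and then invoke decay of correlations against $L^1$ with $\phi=\I_A$ and $\psi=\I_{\mathscr W_{0,m-t}(A)}$ together with the monotonicity of $\gamma$. The only cosmetic difference is that the paper shifts indices to run $j=0,\ldots,t'-1$ before the union bound, whereas you keep $j=s,\ldots,s+t'-1$; the computations are otherwise identical.
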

 \begin{proof}
By stationarity, we have for $\mathscr W_{i,n}=\mathscr W_{i,n}(A)$,
\begin{multline*}
\p(\mathscr W_{0,s}\cap \mathscr W_{s+t',m})-\p(\mathscr W_{0,s+t'+m})=\p(\mathscr W_{0,s}\cap \mathscr W_{s,t'}^c\cap\mathscr W_{s+t',m})\leq
\p(\mathscr W_{0,t'}^c\cap\mathscr W_{t',m})\\
\leq \p(\mathscr W_{0,t'}^c\cap\mathscr W_{t'+t,m-t}) \leq \sum_{j=0}^{t'-1}\p(f^{-j}(A)\cap \mathscr W_{t'+t,m-t}) =  \sum_{j=0}^{t'-1}\p(A\cap \mathscr W_{t'+t-j,m-t})
\end{multline*}
Using stationarity, decay of correlations against $L^1$, with $\phi=\I_A$ and $\psi=\I_{\mathscr W_{0,m-t}}$, and the fact that $\gamma$ in non-increasing, it follows that
\begin{align*}
 \sum_{j=0}^{t'-1}\p\left(A\cap \mathscr W_{t'+t-j,m-t}\right)&\leq \sum_{j=0}^{t'-1} \left(\p(A)\p(\mathscr W_{t'+t-j,m-t})+ \|\I_A\|_{\mathcal C} \p(\mathscr W_{t'+t-j,m-t}) \gamma(t'+t-j)\right)\\ 
 &\leq \sum_{j=0}^{t'-1}\left[\p(A)+ \|\I_A\|_{\mathcal C}\,\gamma(t)\right]\p(\mathscr W_{0,m-t}),
\end{align*}

and the result follows immediately.
\end{proof}
 
In the result that follows we give an estimate for the probability of not visiting the set $A$ within a block, which is the cornerstone of the recursive arguments mounted in Lemmas~\ref{lem:crucial-estimate} and \ref{lem:main-thing}.

\begin{lemma}
\label{lem:inductive-step}Assume that the system has decay of correlations against $L^1$ with a non-increasing rate function $\gamma:\N\to \R$. 
such that $\gamma(n)\to 0$, as $n\to\infty$.
Then for any fixed $A,B\in \mathcal B$ and positive integers $s,t,m$, we have:
\begin{equation*}
\Big|\p(\mathscr W_{0,s}(A)\cap f^{-(s+t)}(B))-\p(B)(1-s\p(A))\Big|\leq \Xi_{A,s} \;\p(B),
\end{equation*}
where
\begin{align*}
\Xi_{A,s}:=& \|\I_A\|_\mathcal Cs \gamma (t)
+\|\I_A\|_{\mathcal C}(s-R(A))\left[\p(A)+\|\I_A\|_{\mathcal C}\gamma(t)\right]\sum_{q=R(A)}^{s-1}\gamma(q)\\& 
+s(s-R(A))\left[(\p(A))^2+\p(A)\|\I_A\|_{\mathcal C}\gamma(t)\right].
\end{align*}

 \end{lemma}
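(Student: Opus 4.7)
The plan is to combine a Bonferroni inequality with two applications of the decay-of-correlations-against-$L^1$ hypothesis. Writing $\mathscr W_{0,s}(A)=\X\setminus\bigcup_{i=0}^{s-1}f^{-i}(A)$, I would start from
$$\p\l\mathscr W_{0,s}(A)\cap f^{-(s+t)}(B)\r=\p(B)-\p\l\bigcup_{i=0}^{s-1}E_i\r,\qquad E_i:=f^{-i}(A)\cap f^{-(s+t)}(B),$$
which reduces the claim to $\bigl|\p(\cup_i E_i)-s\p(A)\p(B)\bigr|\le\Xi_{A,s}\p(B)$. Bonferroni then gives
$$\left|\p\l\bigcup_{i=0}^{s-1}E_i\r-\sum_{i=0}^{s-1}\p(E_i)\right|\le\sum_{0\le i<j\le s-1}\p(E_i\cap E_j),$$
so it suffices to control the linear sum and the sum of pairwise intersections separately.

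For the linear term, stationarity gives $\p(E_i)=\p(A\cap f^{-(s+t-i)}(B))$. Since $s+t-i\ge t$ and $\gamma$ is non-increasing, decay of correlations with $\phi=\I_A\in\mathcal C$ and $\psi=\I_B\in L^1(\p)$ yields $\bigl|\p(E_i)-\p(A)\p(B)\bigr|\le\|\I_A\|_{\mathcal C}\p(B)\gamma(t)$. Summing over $0\le i\le s-1$ accounts for the first summand $\|\I_A\|_{\mathcal C}s\gamma(t)\p(B)$ of $\Xi_{A,s}\p(B)$.

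The pairwise term is the crucial step. Stationarity gives $\p(E_i\cap E_j)=\p\bigl(A\cap f^{-(j-i)}(A)\cap f^{-(s+t-i)}(B)\bigr)$; when $j-i<R(A)$ this vanishes because $A\cap f^{-(j-i)}(A)=\es$, which is exactly where the return time $R(A)$ enters the formula. For $j-i\ge R(A)$, I regroup as $\p\bigl(A\cap f^{-(j-i)}(A\cap f^{-(s+t-j)}(B))\bigr)$ and apply decay of correlations with $\phi=\I_A$ and $\psi=\I_{A\cap f^{-(s+t-j)}(B)}\in L^1(\p)$, then a second application of decay of correlations to $\p(A\cap f^{-(s+t-j)}(B))$ (admissible since $s+t-j\ge t$), obtaining
$$\p(E_i\cap E_j)\le\bigl[\p(A)+\|\I_A\|_{\mathcal C}\gamma(j-i)\bigr]\bigl[\p(A)\p(B)+\|\I_A\|_{\mathcal C}\p(B)\gamma(t)\bigr].$$
Grouping pairs by $q=j-i$ (there are at most $s-q$ such pairs) and using $\sum_{q=R(A)}^{s-1}(s-q)\le s(s-R(A))$ together with $\sum_{q=R(A)}^{s-1}(s-q)\gamma(q)\le(s-R(A))\sum_{q=R(A)}^{s-1}\gamma(q)$ produces precisely the remaining two summands of $\Xi_{A,s}\p(B)$.

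The main technical point is this double use of decay of correlations in the pairwise term, and in particular checking that $\I_{A\cap f^{-(s+t-j)}(B)}$ is an admissible $L^1$ observable in the hypothesis — which is immediate since the $\mathcal C_2$-norm in the definition of $\cv_\p$ is precisely the $L^1(\p)$-norm. The rest is bookkeeping: combining the linear estimate, the pairwise bound, and the Bonferroni missing-pairs term, factoring out $\p(B)$, and matching the three resulting summands against $\Xi_{A,s}$.
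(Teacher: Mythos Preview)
Your proposal is correct and follows essentially the same approach as the paper: both write $\mathscr W_{0,s}(A)\cap f^{-(s+t)}(B)=f^{-(s+t)}(B)\setminus\bigcup_i E_i$, use the Bonferroni/inclusion--exclusion bound to reduce to a linear sum and a pairwise sum, handle the linear sum by one application of decay of correlations against $L^1$, and handle the pairwise sum by exploiting $R(A)$ to truncate and then applying decay of correlations twice (first with $\psi=\I_{A\cap f^{-(s+t-j)}(B)}$, then with $\psi=\I_B$). The only cosmetic difference is that the paper first splits via the triangle inequality \eqref{eq:triangular1} and then brings in Bonferroni, whereas you invoke Bonferroni directly on $\p(\cup_i E_i)$; the resulting estimates and the bookkeeping that produces the three summands of $\Xi_{A,s}$ are identical.
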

\begin{proof}
Observe that for $\mathscr W_{i,n}=\mathscr W_{i,n}(A)$,
\begin{multline}
\label{eq:triangular1}
\big|\p(\mathscr W_{0,s}\cap f^{-(s+t)}(B) )-\p(B)(1-s\p(A))\big|\leq \left|s\p(A)\p(B)-\sum_{j=0}^{s-1}\p(A\cap f^{-(s+t-j)}(B))\right| + \\
+\left|\p(\mathscr W_{0,s}\cap f^{-(s+t)}(B) )-\p(B)+\sum_{j=0}^{s-1}\p(A\cap f^{-(s+t-j)}(B))\right|.
\end{multline}
Using stationarity, decay of correlations against $L^1$ and the fact that $\gamma$ is non-increasing, it follows immediately that for the first term on right:
\begin{align}
\label{eq:term1}
\left|s\p(A)\p(B)-\sum_{j=0}^{s-1}\p(A\cap f^{-(s+t-j)}(B))\right|&\leq s \gamma (t)\|\I_A\|_\mathcal C\|\I_{B}\|_{1}= s \gamma (t)\|\I_A\|_\mathcal C\p(B).
\end{align}
For the second term on the right we have
\[
\p(\mathscr W_{0,s}\cap f^{-(s+t)}(B))=\p(f^{-(s+t)}(B))-\p(\mathscr W_{0,s}^c\cap f^{-(s+t)}(B))=\p(B)-\p(\mathscr W_{0,s}^c\cap f^{-(s+t)}(B)).
\]
Moreover since $\mathscr W_{0,s}^c\cap f^{-(s+t)}(B)=\cup_{i=0}^{s-1}f^{-i}(A)\cap f^{-(s+t)}(B)$, by the formula for the probability of the combination of events (see the beginning of Section~4 of \cite{F50}, for example), it follows that
\[
0\leq \sum_{j=0}^{s-1}\p(f^{-j}(A)\cap f^{-(s+t)}(B))-\p(\mathscr W_{0,s}^c\cap f^{-(s+t)}(B)) \leq \sum_{j=0}^{s-1}\sum_{i>j}^{s-1}\p(f^{-j}(A)\cap f^{-i}(A)\cap f^{-(s+t)}(B)).
\]
Hence, by definition of $R(A)$ (note that $A\cap f^{-i}(A)\neq \emptyset$ implies that $i\geq R(A)$), stationarity and decay of correlations against $L^1$ (with $\phi=\I_A$ and $\psi=\I_{A\cap f^{-(s+t-i)}(B)}$) we have 
\begin{align*}
\Big|\p(\mathscr W_{0,s}&\cap f^{-(s+t)}(B))-\p(B)+\sum_{j=0}^{s-1}\p(A\cap f^{-(s+t-j)}(B))\Big|\\
&\leq \sum_{j=0}^{s-1}\sum_{i= j+R(A)}^{s-1}\p\left(A\cap f^{-(i-j)}(A)\cap f^{-(s+t-j)}(B)\right)\nonumber\\
&\leq\sum_{j=0}^{s-1}\sum_{i= j+R(A)}^{s-1} \p(A)\p\left(A\cap f^{-(s+t-i)}(B)\right)+\sum_{j=0}^{s-1}\sum_{i= j+R(A)}^{s-1} \|\I_A\|_\mathcal C\gamma(i-j)\p\left(A\cap f^{-(s+t-i)}(B)\right)\\&=: I+I\!\!I.
\end{align*}
Using stationarity, decay of correlations against $L^1$ (with $\phi=\I_A$ and $\psi=\I_{ B}$), the fact that $s+t-i>t$ and $\gamma$ is non-increasing, we have
\begin{equation*}
I\leq \sum_{j=0}^{s-1}\sum_{i= j+R(A)}^{s-1}\p(A)\left[\p(A)+\|\I_A\|_\mathcal C\gamma(t)\right]\p(B)
\leq \p(B)s(s-R(A))\left[(\p(A))^2+\p(A)\|\I_A\|_{\mathcal C}\gamma(t)\right].
\end{equation*}
and 
\begin{align*}
I\!\!I&\leq \sum_{j=0}^{s-1}\sum_{i= j+R(A)}^{s-1} \|\I_A\|_\mathcal C\gamma(i-j)\left[\p(A)+\|\I_A\|_\mathcal C\gamma(t)\right]\p(B)\\&\leq  \p(B)\|\I_A\|_\mathcal C \left[\p(A)+\|\I_A\|_\mathcal C\gamma(t)\right]\sum_{q=R(A)}^{s-1}(s-q)\gamma(q)\\&\leq  \p(B)\|\I_A\|_\mathcal C (s-R(A)) \left[\p(A)+\|\I_A\|_\mathcal C\gamma(t)\right]\sum_{q=R(A)}^{s-1}\gamma(q).
\end{align*}

The result now follows from plugging \eqref{eq:term1} and the estimates on $I$ and $I\!\!I$ into \eqref{eq:triangular1}.
\end{proof}

\begin{lemma}
\label{lem:crucial-estimate}
Assume that the systems has decay of correlations against $L^1$ with a non-increasing rate function $\gamma:\N\to \R$ 
such that $\gamma(n)\to 0$, as $n\to\infty$.
Let $A,D\in \mathcal B$ and $\ell,t$ be positive integers, such that $\ell\p(A)<1$. Also let $L=1-\ell\p(A)$. Then, for every $i\in\N$, we have
\begin{equation*}
\p\left(\bigcap_{j=0}^{i-1}\mathscr W_{j(\ell+t), \ell}(A)\cap f^{-i(\ell+t)}(D)\right)\leq (L+\Xi_{A,\ell})^i\;\p(D).
\end{equation*}
\end{lemma}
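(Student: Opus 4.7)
The plan is to proceed by induction on $i$, peeling off the first block and applying Lemma~\ref{lem:inductive-step} at each step.

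For the base case $i=0$ the event on the left is simply $D$ and the right-hand side is $\p(D)$, so equality holds. For $i=1$ the event is exactly $\mathscr W_{0,\ell}(A)\cap f^{-(\ell+t)}(D)$, and applying Lemma~\ref{lem:inductive-step} with $s=\ell$ and $B=D$ immediately yields
\[
\p\bigl(\mathscr W_{0,\ell}(A)\cap f^{-(\ell+t)}(D)\bigr)\leq \bigl(1-\ell\p(A)+\Xi_{A,\ell}\bigr)\p(D)=(L+\Xi_{A,\ell})\p(D).
\]

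For the inductive step, assume the bound for $i-1$ and consider $i\geq 2$. The key observation is that the intersection can be rewritten as
\[
\bigcap_{j=0}^{i-1}\mathscr W_{j(\ell+t),\ell}(A)\cap f^{-i(\ell+t)}(D)\;=\;\mathscr W_{0,\ell}(A)\cap f^{-(\ell+t)}(E'),
\]
where
\[
E':=\bigcap_{j=0}^{i-2}\mathscr W_{j(\ell+t),\ell}(A)\cap f^{-(i-1)(\ell+t)}(D)\;\in\;\mathcal B,
\]
by identifying $\mathscr W_{(j+1)(\ell+t),\ell}(A)=f^{-(\ell+t)}\bigl(\mathscr W_{j(\ell+t),\ell}(A)\bigr)$ and $f^{-i(\ell+t)}(D)=f^{-(\ell+t)}\bigl(f^{-(i-1)(\ell+t)}(D)\bigr)$. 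Now apply Lemma~\ref{lem:inductive-step} with $s=\ell$, $t=t$ and $B=E'$ to obtain
\[
\p\bigl(\mathscr W_{0,\ell}(A)\cap f^{-(\ell+t)}(E')\bigr)\leq (L+\Xi_{A,\ell})\,\p(E').
\]
By $T$-invariance of $\p$, $\p(E')$ equals the probability of the same pattern shifted so that the first block starts at time $0$, hence $\p(E')$ is exactly the quantity estimated by the inductive hypothesis at level $i-1$, giving $\p(E')\leq (L+\Xi_{A,\ell})^{i-1}\p(D)$. Multiplying these bounds yields $(L+\Xi_{A,\ell})^i\p(D)$, as required.

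There is no real obstacle here beyond bookkeeping: the work has already been done in Lemma~\ref{lem:inductive-step}, which produces exactly the one-step error $\Xi_{A,\ell}\p(B)$ that is needed so that a single peeling contributes a multiplicative factor $L+\Xi_{A,\ell}$. The only point that requires care is the identification of the ``tail'' event $E'$ with a shifted copy of the $(i-1)$-level event, which is legitimate thanks to stationarity (so that the inductive hypothesis applies to $\p(E')$) and to the measurability of $E'$ (so that $\p(E')$ plays the role of $\p(B)$ in Lemma~\ref{lem:inductive-step}). Neither positivity of $L$ nor the condition $\ell\p(A)<1$ are needed for the bound itself; the latter hypothesis is stated simply so that $L+\Xi_{A,\ell}$ is a meaningful ``near-one'' quantity when this lemma is fed into the proofs of Theorems~\ref{thm:sharp-error-EVL} and \ref{thm:sharp-error-HTS}.
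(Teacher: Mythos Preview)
Your proof is correct and in fact cleaner than the paper's. Both arguments rest on the same observation---that with $E_j:=\bigcap_{s=0}^{j-1}\mathscr W_{s(\ell+t),\ell}(A)\cap f^{-j(\ell+t)}(D)$ one has $E_j=\mathscr W_{0,\ell}(A)\cap f^{-(\ell+t)}(E_{j-1})$, so Lemma~\ref{lem:inductive-step} with $B=E_{j-1}$ gives the one-step bound $\p(E_j)\le (L+\Xi_{A,\ell})\,\p(E_{j-1})$. You simply iterate this, which is the natural thing to do. The paper instead writes the telescoping identity $\p(E_i)-L^i\p(D)=\sum_{j=0}^{i-1}L^j(\p(E_{i-j})-L\p(E_{i-j-1}))$, bounds each summand by $L^j\Xi_{A,\ell}\p(E_{i-j-1})$, and then proves the same inductive inequality using the algebraic identity $(a+b)^n=a^n+\sum_{j=0}^{n-1}b\,a^{n-1-j}(a+b)^j$; this detour is unnecessary for the one-sided bound stated in the lemma. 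One small remark: your appeal to stationarity for $\p(E')$ is superfluous, since your $E'$ is already literally $E_{i-1}$ (the first block of $E'$ starts at time $0$), so the inductive hypothesis applies to it directly.
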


\begin{proof}
We begin by defining $E_0=D$ and $E_j=\bigcap_{s=0}^{j-1}\mathscr W_{s(\ell+t), \ell}(A)\cap f^{-j(\ell+t)}(D)$, for every $j=1,\ldots, i$. Now, we note that
$$
\p(E_i)-L^i\p(D)=\sum_{j=0}^{i-1} L^j\left(\p(E_{i-j})-L\p(E_{i-j-1})\right).
$$
Observing that, for every $j$, we have $E_j=\mathscr W_{0,\ell}(A) \cap f^{-(\ell+t)}( E_{j-1})$, we can use Lemma~\ref{lem:inductive-step}, with $s=\ell$ and $B=E_{i-j-1}$, to obtain that, for every $j=1,\ldots, i$, we have 
$$
\left|\p(E_{i-j})-L\p(E_{i-j-1})\right|\leq \Xi_{A,\ell} \p(E_{i-j-1}).
$$
Hence, 
\begin{equation}
\label{eq:recursive-formula}
|\p(E_i)-L^i\p(D)|\leq \sum_{j=0}^{i-1} L^j \Xi_{A,\ell} \p(E_{i-j-1}).
\end{equation}

We want to show that for all $i\in\N$, we have 
\begin{equation}
\label{eq:induction-1}
\p(E_i)\leq (L+\Xi_{A,\ell})^i\p(D),
\end{equation} which we will show by induction. For $i=1$, the inequality holds, trivially, by applying Lemma~\ref{lem:inductive-step} with $s=\ell$ and $B=D$, to obtain that $\p(E_1)\leq (L+\Xi_{A,\ell})\p(D)$. Now, assume that inequality \eqref{eq:induction-1} holds for all $j=1,\ldots,i-1$. Then, by \eqref{eq:recursive-formula}, we have
\begin{align*}
\p(E_i)&\leq L^i\p(D)+\sum_{j=0}^{i-1} L^j \Xi_{A,\ell} \p(E_{i-j-1})=L^i\p(D)+\sum_{j=0}^{i-1} L^{i-1-j} \Xi_{A,\ell} \p(E_{j})\\
&\leq L^i\p(D)+\sum_{j=0}^{i-1} L^{i-1-j} \Xi_{A,\ell} (L+\Xi_{A,\ell})^j\p(D)\qquad\mbox{by inductive assumption}\\
&=(L+\Xi_{A,\ell})^i\;\p(D).
\end{align*}
In the last equality, we used the fact: $(a+b)^n=a^n+\sum_{j=0}^{n-1}b\,a^{n-1-j}(a+b)^j$, for all $a,b\in\R$ and $n\in\N$, which can be easily verified by induction.
\end{proof}

\begin{lemma}
\label{lem:main-thing}
Assume that the system has decay of correlations of observables $\phi$, in a Banach space $\mathcal C$, against $\psi\in L^1$, with a non-increasing rate function $\gamma:\N\to \R$, 
such that $\gamma(n)\to 0$, as $n\to\infty$.
Let $A\in \mathcal B$ and $\ell,t$ be positive integers, such that $\ell\p(A)<1$. Also let $L=1-\ell\p(A)$. Then, for every $k\in\N$, we have
\begin{equation*}
\left|\p\left(\mathscr W_{0, k(\ell+t)}(A)\right)-L^k\right|\leq k\Upsilon_A(L+\Upsilon_A)^{k-1},
\end{equation*}
where $\Upsilon_A:=t[\p(A)+\|\I_A\|_{\mathcal C}\gamma(t)]+\Xi_{A,\ell}$ and $\Xi_{A,\ell}$ is as in Lemma~\ref{lem:inductive-step}, with $s=\ell$.
\end{lemma}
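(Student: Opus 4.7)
Setting $a_j := \p(\mathscr W_{0, j(\ell+t)}(A))$ with the convention $a_0 = 1$, I would proceed via the telescoping identity
\[
a_k - L^k = \sum_{j=1}^{k} L^{k-j}\bigl(a_j - L a_{j-1}\bigr),
\]
so it suffices to establish the one-step estimate $|a_j - L a_{j-1}|\le\Upsilon_A(L+\Upsilon_A)^{j-1}$. Indeed, given such a bound, $L\le L+\Upsilon_A$ makes every summand at most $\Upsilon_A(L+\Upsilon_A)^{k-1}$, giving the claim $|a_k-L^k|\le k\Upsilon_A(L+\Upsilon_A)^{k-1}$.

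For the one-step estimate with $j\ge 2$, I would decompose, by stationarity,
\[
\mathscr W_{0, j(\ell+t)}(A) = \mathscr W_{0, \ell}(A)\cap\mathscr W_{\ell, t}(A)\cap f^{-(\ell+t)}\bigl(\mathscr W_{0, (j-1)(\ell+t)}(A)\bigr),
\]
and apply the two preparatory lemmas in succession. First, Lemma~\ref{lem:time-gap} with $s=\ell$, $t'=t$, $m=(j-1)(\ell+t)$ drops the middle gap factor with error at most $t[\p(A)+\|\I_A\|_{\mathcal C}\gamma(t)]\,\p(\mathscr W_{0,(j-1)(\ell+t)-t}(A))$. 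Second, Lemma~\ref{lem:inductive-step} with $s=\ell$ and $B=\mathscr W_{0,(j-1)(\ell+t)}(A)$ decouples the first block from the rest, producing the product $(1-\ell\p(A))\p(B)=La_{j-1}$ up to an error $\Xi_{A,\ell}\,a_{j-1}$. Since $\mathscr W_{0,m-t}(A)\supseteq \mathscr W_{0,m}(A)$, the shorter-window probability dominates $a_{j-1}$, so both errors can be absorbed into a single bound
\[
|a_j - L a_{j-1}|\le\Upsilon_A\,\p\bigl(\mathscr W_{0,(j-1)(\ell+t)-t}(A)\bigr).
\]
The base case $j=1$ is handled separately by Lemma~\ref{lem:inductive-step} with $B=\X$ (giving $|\p(\mathscr W_{0,\ell}(A))-L|\le\Xi_{A,\ell}$) combined with the trivial union bound $\p(\mathscr W_{0,\ell}(A))-a_1\le t\p(A)$, yielding $|a_1-L|\le\Upsilon_A$.

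The remaining task is to control $\p(\mathscr W_{0,(j-1)(\ell+t)-t}(A))$ by $(L+\Upsilon_A)^{j-1}$. The key observation is that, since $(j-1)(\ell+t)-t=(j-2)(\ell+t)+\ell$, the no-hit interval $[0,(j-2)(\ell+t)+\ell-1]$ contains the block positions $\bigcup_{s=0}^{j-2}[s(\ell+t),\,s(\ell+t)+\ell-1]$, producing the inclusion
\[
\mathscr W_{0,(j-1)(\ell+t)-t}(A)\subseteq\bigcap_{s=0}^{j-2}\mathscr W_{s(\ell+t),\ell}(A).
\]
Applying Lemma~\ref{lem:crucial-estimate} with $D=\X$ and $i=j-1$ then yields $\p(\mathscr W_{0,(j-1)(\ell+t)-t}(A))\le (L+\Xi_{A,\ell})^{j-1}\le(L+\Upsilon_A)^{j-1}$, and substituting into the previous display closes the one-step bound and hence the proof.

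The main obstacle is precisely this auxiliary bound: Lemma~\ref{lem:time-gap} produces an error controlled by a slightly shorter-window probability $\p(\mathscr W_{0,m-t}(A))$ rather than by $a_{j-1}$ itself, so a direct telescoping on the sequence $(a_j)$ does not close geometrically. The workaround — comparing to the block-only intersection and appealing to the already-proved Lemma~\ref{lem:crucial-estimate} — is what allows us to recover the desired factor $(L+\Upsilon_A)^{j-1}$ at each step.
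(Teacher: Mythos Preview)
Your proof is correct and follows essentially the same approach as the paper's: the same telescoping identity, the same one-step estimate via Lemma~\ref{lem:time-gap} followed by Lemma~\ref{lem:inductive-step}, the same inclusion of $\mathscr W_{0,(j-1)(\ell+t)-t}(A)$ into the block-only intersection $\bigcap_{s=0}^{j-2}\mathscr W_{s(\ell+t),\ell}(A)$, and the same appeal to Lemma~\ref{lem:crucial-estimate} with $D=\X$. The only difference is cosmetic: in the final summation the paper computes $\sum_{i=0}^{k-1} L^{k-1-i}\Upsilon_A(L+\Upsilon_A)^i = (L+\Upsilon_A)^k - L^k$ exactly via Fermat's combinatorial identity and then invokes the Mean Value Theorem, whereas you simply bound each summand by $\Upsilon_A(L+\Upsilon_A)^{k-1}$ directly using $L\le L+\Upsilon_A$, which is a bit more economical.
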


\begin{proof}
For  $\mathscr W_{i,n}=\mathscr W_{i,n}(A)$ for $n\in \N$, and assuming that $\mathscr W_{0,0}=\X$, we may write
\begin{equation}
\label{eq:computation1}
\left|\p\left(\mathscr W_{0, k(\ell+t)}\right)-L^k\right|\leq \sum_{i=0}^{k-1} L^{k-1-i}\left|\p(\mathscr W_{0,(i+1)(\ell+t)})-L\p(\mathscr W_{0,i(\ell+t)})\right|. 
\end{equation}
Now, we estimate $\left|\p(\mathscr W_{0,(i+1)(\ell+t)})-L\p(\mathscr W_{0,i(\ell+t)})\right|$, which we break into two pieces.
By Lemma~\ref{lem:time-gap}, with $s=\ell, t'=t$ and $m=i(\ell+t)$, 
we have
\begin{align*}
\big|\p(\mathscr W_{0,(i+1)(\ell+t)})&-\p(\mathscr W_{0,\ell}\cap \mathscr W_{\ell+t,i(\ell+t)})\big|\leq t[\p(A)+\|\I_A\|_{\mathcal C}\gamma(t)]\p(\mathscr W_{0,i(\ell+t)-t})\\
 &\leq t[\p(A)+\|\I_A\|_{\mathcal C}\gamma(t)] \p\left(\bigcap_{j=0}^{i-1}\mathscr W_{j(\ell+t), \ell}\right).
 \end{align*}
By Lemma~\ref{lem:inductive-step}, with $s=\ell$ and $B=\mathscr W_{0,i(\ell+t)}$, we have
\begin{equation*}
\big|\p(\mathscr W_{0,\ell}\cap \mathscr W_{\ell+t,i(\ell+t)})-L\p(\mathscr W_{0,i(\ell+t)})\big|
\leq \Xi_{A,\ell} \p(\mathscr W_{0,i(\ell+t)})\leq \Xi_{A,\ell} \p\left(\bigcap_{j=0}^{i-1}\mathscr W_{j(\ell+t), \ell}\right).\end{equation*}
Using these last two estimates, recalling that $\Upsilon_A=t[\p(A)+\|\I_A\|_{\mathcal C}\gamma(t)]+\Xi_{A,\ell}$ and using Lemma~\ref{lem:crucial-estimate}, with $D=\X$, we obtain
\begin{equation}
\label{eq:important-relation}
\left|\p(\mathscr W_{0,(i+1)(\ell+t)})-L\p(\mathscr W_{0,i(\ell+t)})\right|\leq \Upsilon_A\p\left(\bigcap_{j=0}^{i-1}\mathscr W_{j(\ell+t), \ell}\right)\leq \Upsilon_A (L+\Xi_{A,\ell})^i.
\end{equation}
Plugging \eqref{eq:important-relation} into \eqref{eq:computation1} and  we obtain
\begin{align*}
\left|\p\left(\mathscr W_{0, k(\ell+t)}\right)-L^k\right|&\leq \sum_{i=0}^{k-1} L^{k-1-i}\Upsilon_A (L+\Xi_{A,\ell})^i\leq  \sum_{i=0}^{k-1} L^{k-1-i}\Upsilon_A (L+\Upsilon_A)^i\\
&=\sum_{i=0}^{k-1}\sum_{j=0}^i \binom i j L^{k-1-j}\Upsilon_A^{j+1} =
\sum_{j=0}^{k-1} L^{k-1-j}\Upsilon_A^{j+1}\sum_{i=j}^{k-1}  \binom i j \nonumber\\&= \sum_{j=0}^{k-1} L^{k-1-j}\Upsilon_A^{j+1}\binom k{j+1}= \sum_{q=1}^{k} L^{k-q}\Upsilon_A^{q}\binom k q\\
&= \left(L+\Upsilon_A\right)^k-L^k\leq k\Upsilon_A (L+\Upsilon_A)^{k-1},
\end{align*}
where the third equality derives from Fermat's combinatorial identity and the last inequality follows by the Mean Value Theorem.
\end{proof}

\begin{corollary}
\label{cor:n-estimate}
Assume that the system has decay of correlations of observables $\phi$, in a Banach space $\mathcal C$, against $\psi\in L^1$, with a non-increasing rate function $\gamma:\N\to \R$, such that $\gamma(n)\to 0$, as $n\to\infty$.
Fix $A\in \mathcal B$ and $n\in\N$. Let $\ell,k,b\in\N$ be such that $n=k\lfloor n/k \rfloor+b$ and $\lfloor n/k \rfloor\p(A)<1$. Also, consider an integer $t$ such that $t<\lfloor n/k \rfloor$, and set $\ell=\lfloor n/k \rfloor-t$, $L:=1-\ell\p(A)$.  Then
\begin{equation*}
\left|\p\left(\mathscr W_{0, n}(A)\right)-L^k\right|\leq k\Upsilon_A(L+\Upsilon_A)^{k-1}(1+L+\Upsilon_A),
\end{equation*}
where $\Upsilon_A:=t[\p(A)+\|\I_A\|_{\mathcal C}\gamma(t)]+\Xi_{A,\ell}$ and $\Xi_{A,\ell}$ is as in Lemma~\ref{lem:inductive-step}, with $s=\ell$.
\end{corollary}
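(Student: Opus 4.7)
The plan is to imitate the proof of Lemma~\ref{lem:main-thing}, allowing for the extra tail of length $b<k$ left over after the $k$ blocks of size $\ell$ separated by $k$ gaps of size $t$. Writing $n=k(\ell+t)+b$, I would set $P_i:=\p(\mathscr W_{0,i(\ell+t)+b}(A))$ for $i=0,1,\ldots,k$, so that $P_k=\p(\mathscr W_{0,n}(A))$ and $P_0=\p(\mathscr W_{0,b}(A))$, and run the same telescoping argument as in Lemma~\ref{lem:main-thing} but shifted by the tail $b$.

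The key one-step estimate to establish is
\[
|P_{i+1}-LP_i|\leq \Upsilon_A(L+\Upsilon_A)^i\qquad(i=0,1,\ldots,k-1),
\]
which I would obtain exactly as in the derivation of \eqref{eq:important-relation}: first apply Lemma~\ref{lem:time-gap} with $s=\ell$, $t'=t$, $m=i(\ell+t)+b$ to open a gap of size $t$ between the first block and the remainder, then apply Lemma~\ref{lem:inductive-step} with $s=\ell$ and $B=\mathscr W_{0,i(\ell+t)+b}(A)$ (using stationarity and $f^{-(\ell+t)}(B)=\mathscr W_{\ell+t,i(\ell+t)+b}(A)$) to decouple the first block from the rest. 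The resulting error is bounded by $\Upsilon_A\,\p(\mathscr W_{0,i(\ell+t)+b-t}(A))$, which I would control via Lemma~\ref{lem:crucial-estimate} (with $D=\X$) through the inclusion $\mathscr W_{0,i(\ell+t)+b-t}(A)\subseteq\bigcap_{j=0}^{i-1}\mathscr W_{j(\ell+t),\ell}(A)$, giving $(L+\Xi_{A,\ell})^i\leq(L+\Upsilon_A)^i$.

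Telescoping via $P_k-L^k P_0=\sum_{i=0}^{k-1}L^{k-1-i}(P_{i+1}-LP_i)$ then yields $|P_k-L^k P_0|\leq k\Upsilon_A(L+\Upsilon_A)^{k-1}$, exactly as in Lemma~\ref{lem:main-thing}. The remaining task is to handle the boundary term $L^k|P_0-1|$. A union bound gives $1-P_0\leq b\p(A)<k\p(A)$, and since $\Upsilon_A\geq t\p(A)\geq \p(A)$ (as $t\geq 1$), we have $L^k(1-P_0)\leq L^k\cdot k\Upsilon_A\leq k\Upsilon_A(L+\Upsilon_A)^k$. Combining the two contributions with the triangle inequality gives
\[
|P_k-L^k|\leq k\Upsilon_A(L+\Upsilon_A)^{k-1}+k\Upsilon_A(L+\Upsilon_A)^k=k\Upsilon_A(L+\Upsilon_A)^{k-1}(1+L+\Upsilon_A),
\]
as required.

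The only real obstacle is making sure the tail contribution fits inside the extra factor $(1+L+\Upsilon_A)$ supplied by the statement; this forces the use of $\Upsilon_A\geq\p(A)$, which is available because $\Upsilon_A$ contains the summand $t\p(A)$. Every other step is a routine adaptation of the recursive estimate already carried out in Lemma~\ref{lem:main-thing}, so I do not expect serious complications beyond keeping track of the shifted indices.
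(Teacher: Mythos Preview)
Your argument is correct, but the paper organises the proof more economically. Rather than re-running the whole telescoping of Lemma~\ref{lem:main-thing} with the tail $b$ built in, the paper simply writes
\[
\left|\p(\mathscr W_{0,n})-L^k\right|\le \left|\p(\mathscr W_{0,n})-\p(\mathscr W_{0,k(\ell+t)})\right|+\left|\p(\mathscr W_{0,k(\ell+t)})-L^k\right|,
\]
applies Lemma~\ref{lem:main-thing} verbatim to the second term to get $k\Upsilon_A(L+\Upsilon_A)^{k-1}$, and bounds the first term by $k\Upsilon_A(L+\Upsilon_A)^k$ via Lemma~\ref{lem:time-gap} (with $s=0$, $t'=b$) followed by Lemma~\ref{lem:crucial-estimate} with $D=\X$. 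Your shifted recursion $P_i=\p(\mathscr W_{0,i(\ell+t)+b})$ reproduces the same two contributions, only in the opposite order: the telescoping gives $|P_k-L^kP_0|\le k\Upsilon_A(L+\Upsilon_A)^{k-1}$, and the boundary piece $L^k(1-P_0)\le k\Upsilon_A(L+\Upsilon_A)^k$ plays the role of the paper's tail estimate. The advantage of the paper's route is that Lemma~\ref{lem:main-thing} is quoted as a black box, so no recursion needs to be redone; the advantage of yours is that it avoids a second triangle-inequality split at the cost of tracking the shift through the induction. Both rely on $t\ge 1$ to get $[\p(A)+\|\I_A\|_{\mathcal C}\gamma(t)]\le \Upsilon_A$ (or, in your version, $\p(A)\le \Upsilon_A$), so the two proofs are essentially interchangeable.
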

\begin{proof}
Note that  for $\mathscr W_{i,n}=\mathscr W_{i,n}(A)$,
\begin{equation*}
\left|\p\left(\mathscr W_{0, n}\right)-L^k\right|\leq \left|\p\left(\mathscr W_{0, n}\right)-\p\left(\mathscr W_{0, k(\ell+t)}\right)\right|+\left|\p\left(\mathscr W_{0, k(\ell+t)}\right)-L^k\right|.
\end{equation*}
For the second term on the right we have $\left|\p\left(\mathscr W_{0, k(\ell+t)}(A)\right)-L^k\right|\leq k\Upsilon_A(L+\Upsilon_A)^{k-1}$, by Lemma~\ref{lem:main-thing}.
For the first term, by Lemma~\ref{lem:time-gap}, with $s=0$ and $t'=b$, the fact that $b\leq k$ because it is the remainder of the division of $n$ by $k$, and Lemma~\ref{lem:crucial-estimate}, with $i=k$ and $D=\X$, we have 
\begin{align*}
\left|\p\left(\mathscr W_{0, n}\right)-\p\left(\mathscr W_{0, k(\ell+t)}\right)\right|&\leq b[\p(A)+\|\I_A\|_{\mathcal C} \gamma(t)]\p(\mathscr W_{0,k(\ell+t)-t})\leq  k\Upsilon_A \p(\mathscr W_{0,k(\ell+t)-t}) \\
&\leq k\Upsilon_A\p\left(\bigcap_{j=0}^{k-1}\mathscr W_{j(\ell+t), \ell}\right)
\leq k\Upsilon_A (L+\Upsilon_A)^k.
\end{align*}
The result now follows at once.
\end{proof}

 \begin{corollaryP}
 \label{cor:extra-assumption}
 Under the same assumptions of Corollary~\ref{cor:n-estimate}, if $k\Upsilon_A<L/2$ then $$\left|\p(\mathscr W_{0,n}(A))-L^{k}\right|\leq  5k\Upsilon_A L^{k-1}.$$
\end{corollaryP}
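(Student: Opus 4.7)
The starting point is the inequality from Corollary~\ref{cor:n-estimate},
\[
\left|\p(\mathscr W_{0,n}(A))-L^{k}\right|\leq k\Upsilon_A(L+\Upsilon_A)^{k-1}(1+L+\Upsilon_A),
\]
so the task reduces to showing that the factor $(L+\Upsilon_A)^{k-1}(1+L+\Upsilon_A)$ is bounded by $5L^{k-1}$ under the hypothesis $k\Upsilon_A<L/2$.

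The plan is to factor $L^{k-1}$ out of the first term and write $(L+\Upsilon_A)^{k-1}=L^{k-1}(1+\Upsilon_A/L)^{k-1}$. Since $\Upsilon_A/L<1/(2k)$ by the assumption, the standard estimate $(1+x)^{k-1}\leq \e^{(k-1)x}$ gives
\[
(1+\Upsilon_A/L)^{k-1}\leq \e^{(k-1)\Upsilon_A/L}\leq \e^{1/2}.
\]
For the remaining factor, observe that $L=1-\ell\p(A)\leq 1$ and $\Upsilon_A<L/(2k)\leq 1/2$, so $1+L+\Upsilon_A\leq 5/2$.

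Multiplying these two bounds yields $(L+\Upsilon_A)^{k-1}(1+L+\Upsilon_A)\leq \frac{5}{2}\e^{1/2}L^{k-1}$, and since $\frac{5}{2}\e^{1/2}<5$, combining with the Corollary~\ref{cor:n-estimate} estimate gives the desired inequality. The argument is essentially an elementary calculation; the only mild subtlety is to make sure the exponential bound on $(1+\Upsilon_A/L)^{k-1}$ is used in a way that absorbs the $k$-dependence, which is precisely what the hypothesis $k\Upsilon_A<L/2$ is designed to do.
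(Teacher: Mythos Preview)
Your proof is correct and follows essentially the same route as the paper's: start from the bound of Corollary~\ref{cor:n-estimate}, factor out $L^{k-1}$, use $k\Upsilon_A<L/2$ to bound $(1+\Upsilon_A/L)^{k-1}\leq \e^{1/2}$, bound $1+L+\Upsilon_A\leq 5/2$, and conclude since $\frac{5}{2}\e^{1/2}<5$. The only cosmetic difference is that the paper writes the intermediate step as $(1+\tfrac{1}{2k})^{k-1}$ before passing to the exponential, while you go directly to $\e^{(k-1)\Upsilon_A/L}$.
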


\begin{proof}
Under the assumption that $k\Upsilon_A<L/2$ and since $L<1$, we have
\begin{align*}
\left|\p(\mathscr W_{0,k(\ell+t)})-L^k\right|&\leq k\Upsilon_A (L+\Upsilon_A)^{k-1}(1+L+\Upsilon_A)\\&\leq  k\Upsilon_A L^{k-1}\left(1+\frac1{2k}\right)^{k-1}(1+L+\Upsilon_A) \leq \e^{\frac12}\frac52k\Upsilon_A L^{k-1}.
\end{align*}
\end{proof}

The following results will be needed for the sharper estimates in the HTS setting.

\begin{corollaryP}
\label{cor:tau-included}
Under the same assumptions of Corollary~\ref{cor:n-estimate}, let $\tau>0$, note that $\tau n=\tau k (\ell +t)+\tau b$ and set $\beta=\tau k-\lfloor \tau k\rfloor$. Then, when $\lfloor \tau k\rfloor>0$ we may write:
$$
\Big|\p(\mathscr W_{0,\tau n})-L^{\lfloor \tau k\rfloor}\Big|\leq(3+\Upsilon_A)\lceil\tau k\rceil\Upsilon_A(L+\Upsilon_A)^{\lfloor \tau k\rfloor-1}.  
$$
In the case $\lfloor \tau k\rfloor=0$, we have
$$
\Big|\p(\mathscr W_{0,\tau n})-(1-\lfloor\tau k \ell\rfloor\p(A))\Big|\leq \Upsilon_A.
$$
\end{corollaryP}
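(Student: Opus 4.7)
The plan is to adapt the proof of Corollary~\ref{cor:n-estimate} to the slightly irregular window $\tau n$, using the decomposition $\tau n = \lfloor \tau k \rfloor(\ell+t) + r$ with remainder $r := \beta(\ell+t) + \tau b$ playing the role that $b$ played in that proof.

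For the main case $\lfloor \tau k \rfloor > 0$, I would split by the triangle inequality
\[
\bigl|\p(\mathscr{W}_{0,\tau n}(A)) - L^{\lfloor \tau k \rfloor}\bigr| \le \bigl|\p(\mathscr{W}_{0,\tau n}) - \p(\mathscr{W}_{0,\lfloor \tau k \rfloor(\ell+t)})\bigr| + \bigl|\p(\mathscr{W}_{0,\lfloor \tau k \rfloor(\ell+t)}) - L^{\lfloor \tau k \rfloor}\bigr|,
\]
control the second summand by Lemma~\ref{lem:main-thing} applied with $k$ replaced by $\lfloor \tau k \rfloor$ (giving $\lfloor \tau k \rfloor \Upsilon_A (L+\Upsilon_A)^{\lfloor \tau k \rfloor-1}$), and the first by Lemma~\ref{lem:time-gap} with $s=0$, $t'=r$, $m=\lfloor \tau k \rfloor(\ell+t)$, using stationarity to identify $\p(\mathscr{W}_{r,\lfloor \tau k \rfloor(\ell+t)})$ with $\p(\mathscr{W}_{0,\lfloor \tau k \rfloor(\ell+t)})$. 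The surviving tail factor $\p(\mathscr{W}_{0,\lfloor \tau k \rfloor(\ell+t)-t})$ is then bounded by $(L+\Upsilon_A)^{\lfloor \tau k \rfloor}$ via Lemma~\ref{lem:crucial-estimate} with $D=\X$ and $i=\lfloor \tau k \rfloor$.

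The hard part will be bounding the remainder factor $r\bigl[\p(A)+\|\I_A\|_{\mathcal C}\gamma(t)\bigr]$ and reorganizing the resulting expression into the announced $(3+\Upsilon_A)\lceil \tau k \rceil \Upsilon_A(L+\Upsilon_A)^{\lfloor \tau k \rfloor-1}$. Splitting $r = \beta(\ell+t) + \tau b$, the piece $\beta(\ell+t)[\p(A)+\|\I_A\|_{\mathcal C}\gamma(t)]$ is controlled by distributing the product and using $\ell\p(A)<1$, $t[\p(A)+\|\I_A\|_{\mathcal C}\gamma(t)]\le \Upsilon_A$, and $\ell\|\I_A\|_{\mathcal C}\gamma(t) \le \Xi_{A,\ell} \le \Upsilon_A$ (this last quantity being literally the leading term of $\Xi_{A,\ell}$). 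The piece $\tau b[\cdots]\le \tau k\,\Upsilon_A$ follows from $b<k$ and $[\p(A)+\|\I_A\|_{\mathcal C}\gamma(t)]\le \Upsilon_A$ (valid for $t\ge 1$). Assembling the pieces, invoking $\lfloor \tau k \rfloor \le \lceil \tau k \rceil$, and absorbing the remaining constants into the coefficient $(3+\Upsilon_A)$ yields the announced bound.

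For the degenerate case $\lfloor \tau k \rfloor = 0$ (i.e.\ $\tau k < 1$), no full block fits into $[0,\tau n)$ and Lemma~\ref{lem:main-thing} is unavailable; the target becomes the linear approximation $1 - \lfloor \tau k \ell \rfloor\p(A)$. I would apply Lemma~\ref{lem:inductive-step} with $s = \lfloor \tau k \ell \rfloor$ and $B = \X$ (whence $f^{-(s+t)}(B) = \X$), obtaining $\bigl|\p(\mathscr{W}_{0,\lfloor \tau k \ell \rfloor}) - (1 - \lfloor \tau k \ell \rfloor\p(A))\bigr| \le \Xi_{A,\lfloor \tau k \ell \rfloor} \le \Upsilon_A$ (exploiting that $\Xi_{A,\cdot}$ is non-decreasing in its index and $\lfloor \tau k \ell \rfloor \le \ell$), and bridge to $\p(\mathscr{W}_{0,\tau n})$ through a single further application of Lemma~\ref{lem:time-gap} whose excess length $\tau n - \lfloor \tau k \ell \rfloor$ combined with $[\p(A)+\|\I_A\|_{\mathcal C}\gamma(t)]$ contributes only an $O(\Upsilon_A)$ correction.
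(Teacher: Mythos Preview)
Your plan is close to the paper's but compresses two stages into one. The paper first strips off the tail of length $\lceil\tau b\rceil$ (passing from $\mathscr W_{0,\tau n}$ to $\mathscr W_{0,\tau k(\ell+t)}$) via Lemma~\ref{lem:time-gap}, and only then deals with the fractional block of length $\beta(\ell+t)$, splitting into the cases $\beta(\ell+t)\le t$ and $\beta(\ell+t)>t$. In the second case the paper does \emph{not} discard that block with another time-gap estimate; instead it aims at the intermediate target $L^{\lfloor\tau k\rfloor}(1-\lfloor\beta\ell\rfloor\p(A))$ and runs a telescoping recursion in the style of Lemma~\ref{lem:main-thing}, but now with the partial block $\mathscr W_{0,\beta(\ell+t)}$ sitting at the base of the recursion. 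Only at the very end does it pay the price $\lfloor\beta\ell\rfloor\p(A)L^{\lfloor\tau k\rfloor}$ for dropping the factor $(1-\lfloor\beta\ell\rfloor\p(A))$. Your one-shot application of Lemma~\ref{lem:time-gap} with $t'=r=\beta(\ell+t)+\tau b$ leads to essentially the same residual term $\beta\ell\p(A)$, so the two routes are morally equivalent; the paper's case analysis mainly serves to isolate that term cleanly rather than to avoid it.

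One caution about your bookkeeping: the phrase ``absorbing the remaining constants into the coefficient $(3+\Upsilon_A)$'' glosses over the fact that $\beta\ell\p(A)$ is bounded only by $1$ (from $\ell\p(A)<1$), not by a constant multiple of $\Upsilon_A$. Since the target bound carries an overall factor $\Upsilon_A$, you cannot simply fold an $O(1)$ term into a constant coefficient; what is actually needed is an inequality of the type $\beta\ell\p(A)(L+\Upsilon_A)\le\lceil\tau k\rceil\Upsilon_A$, and that deserves a sentence rather than a wave of the hand. The paper's own final inequality is equally terse at precisely this point, so you are not in worse shape than the original, but you should be aware that this is where the argument is thinnest. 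For the degenerate case $\lfloor\tau k\rfloor=0$ your plan coincides with the paper's: apply Lemma~\ref{lem:inductive-step} with $s=\lfloor\tau k\ell\rfloor$, $B=\X$, and bridge the remaining gap of length at most $\tau kt+\tau b$ by elementary estimates.
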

\begin{proof} We start first with case $\lfloor \tau k\rfloor>0$.
The idea is to break the time interval $[0,\tau n)$ into $\lfloor \tau k\rfloor$ blocks of size $\ell+t$ plus one block of size $\beta(\ell+t)$ plus one last block of size $\tau b$.

Using Lemma~\ref{lem:time-gap}, with $s=0$, $t'=\lfloor\tau n\rfloor- \lfloor\tau k(\ell+t)\rfloor\leq \lceil\tau b\rceil$, the fact that $b< k$ (recall that $b$ is the remainder of the division of $n$ by $k$) and Lemma~\ref{lem:crucial-estimate}, with $i=\lfloor\tau k\rfloor$ and $D=\X$, it follows  for $\mathscr W_{i,n}=\mathscr W_{i,n}(A)$:
\begin{align}
\Big|\p(\mathscr W_{0,\tau n})-\p(\mathscr W_{0,\tau k(\ell+t)})\Big|&\leq \lceil \tau b\rceil[\p(A)+\|\I_A\|_{\mathcal C}\gamma(t)]\p(\mathscr W_{0,\lfloor\tau k(\ell+t)\rfloor-t})\nonumber\\
&\leq \lceil\tau k\rceil \Upsilon_A \p\left(\bigcap_{j=0}^{\lfloor\tau k\rfloor -1}\mathscr W_{j(\ell+t), \ell}\right)\leq \lceil\tau k\rceil \Upsilon_A (L+\Upsilon_A)^{\lfloor \tau k\rfloor}. 
\label{eq:approx-2}
\end{align}
If $\beta(\ell+t)\leq t$ then arguing as in \eqref{eq:approx-2}, we obtain:
 $$
 \Big|\p(\mathscr W_{0,\tau k(\ell+t)})-\p(\mathscr W_{0,\lfloor \tau k\rfloor(\ell+t)})\Big|\leq t[\p(A)+\|\I_A\|_{\mathcal C}\gamma(t)]\p(\mathscr W_{0,\lfloor \tau k\rfloor(\ell+t)-t})\leq \Upsilon_A (L+\Upsilon_A)^{\lfloor \tau k\rfloor}.
 $$
Using Lemma~\ref{lem:main-thing} (with $k$ replaced by $\lfloor\tau k\rfloor$) and the estimates just above we obtain
\begin{align*}
\Big|\p(\mathscr W_{0,\tau n})-L^{\lfloor \tau k\rfloor}\Big|\leq (3+\Upsilon_A)\lceil\tau k\rceil\Upsilon_A(L+\Upsilon_A)^{\lfloor \tau k\rfloor-1}.
\end{align*}
Now assume that $\beta(\ell+t)> t$. We have
\begin{align*}
\Big|\p(\mathscr W_{0,\tau k(\ell+t)})-L^{\lfloor \tau k\rfloor}&(1-\lfloor\beta\ell\rfloor\p(A))\Big|\leq L^{\lfloor \tau k\rfloor}|\p(\mathscr W_{0,\beta(\ell+t)}-(1-\lfloor\beta\ell\rfloor\p(A))|\\&+\sum_{i=0}^{\lfloor \tau k\rfloor-1} L^{\lfloor \tau k\rfloor-1-i}\Big|\p(\mathscr W_{0,(i+1+\beta)(\ell+t)})-L\p(\mathscr W_{0,(i+\beta)(\ell+t)})\Big|.
\end{align*}
Observing that $|\p(\mathscr W_{0,\beta(\ell+t)})-\p(\mathscr W_{0,\beta\ell})|\leq \lceil\beta t\rceil\p(A)$ and using Lemma~\ref{lem:inductive-step}, with $s=\lfloor\beta\ell\rfloor$ and $B=\X$, we obtain
$$
|\p(\mathscr W_{0,\beta(\ell+t)}-(1-\lfloor\beta\ell\rfloor\p(A))|\leq \lceil\beta t\rceil\p(A)+ \Xi_{A,\lfloor\beta\ell\rfloor}\leq \beta\Upsilon_A.
$$
Using the same argument that lead to \eqref{eq:important-relation} we may write
$$
\Big|\p(\mathscr W_{0,(i+1+\beta)(\ell+t)})-L\p(\mathscr W_{0,(i+\beta)(\ell+t)})\Big|\leq  \Upsilon_A\p\left(\bigcap_{j=0}^{i-1}\mathscr W_{j(\ell+t), \ell}\right)\leq \Upsilon_A (L+\Xi_{A,\ell})^i\leq \Upsilon_A (L+\Upsilon_A)^i.
$$
Hence, following the same argument used on the last computation of the proof of Lemma~\ref{lem:main-thing}, we obtain
\begin{align*}
\Big|\p(\mathscr W_{0,\tau k(\ell+t)})-L^{\lfloor \tau k\rfloor}(1-\lfloor\beta\ell\rfloor\p(A))\Big|&\leq \lfloor \tau k\rfloor\Upsilon_A (L+\Upsilon_A)^{\lfloor \tau k\rfloor-1}+\beta \Upsilon_A L^{\lfloor \tau k\rfloor}\\ &\leq \tau k\Upsilon_A(L+\Upsilon_A)^{\lfloor \tau k\rfloor-1}.
\end{align*}
Using the estimate above and \eqref{eq:approx-2} we have
\begin{align*}
\Big|\p(\mathscr W_{0,\tau n})-L^{\lfloor \tau k\rfloor}\Big|&\leq \lfloor\beta \ell\rfloor \p(A) L^{\lfloor \tau k\rfloor}+ \lceil\tau k\rceil\Upsilon_A(L+\Upsilon_A)^{\lfloor \tau k\rfloor-1}(1+L+\Upsilon_A)\\& \leq(3+\Upsilon_A)\lceil\tau k\rceil\Upsilon_A(L+\Upsilon_A)^{\lfloor \tau k\rfloor-1}.
\end{align*} %
The case $\lfloor \tau k\rfloor=0$ follows easily from the facts that $\left|\p(\mathscr W_{0,\tau n}))-\p(\mathscr W_{0,\tau k(\ell+t)})\right|\leq \lceil \tau b\rceil\p(A)=\p(A)$, $|\p(\mathscr W_{0,\tau k(\ell+t)})-\p(\mathscr W_{0,\tau k\ell})|\leq \lceil\tau k t\rceil\p(A) $ and  
$$
\Big|\p(\mathscr W_{0,\tau k\ell})-(1-\lfloor\tau k \ell\rfloor \p(A))\Big|\leq
\Xi_{A,\lfloor\tau k \ell\rfloor}
$$
which can be easily derived by using Lemma~\ref{lem:inductive-step}, with $s=\lfloor\tau k \ell\rfloor$ and $B=\X$.
\end{proof}

\section{Improved error estimates for EVLs}
\label{sec:EVL err}

\begin{proof}[Proof of Theorem~\ref{thm:sharp-error-EVL}]
First we recall some useful facts. By definition of $(u_n)_{n\in \N}$ and condition \ref{item:repeller}, we may write $n\p(U_n)\to\tau$ and $n\p(A_n)\to\theta\tau$. Recalling Remark~\ref{rem:sequences}, we observe that by hypothesis the sequences $(t_n)_{n\in\N}$ and $(k_n)_{n\in\N}$ are such that $t_n, k_n\to\infty$, as $n\to\infty$, and $t_nk_n=o(n)$. Hence, $t_n=o(n/k_n)$ and $k_n\ell_n\p(A_n)\to \theta\tau$, as $n\to\infty$.

Let $L_n=(1-\ell_n\p(A_n))$. Note that for $n$ large $0<L_n\leq 1$ and $\lim_{n\to\infty}L_n\to1$. We have:
\begin{align}
\label{eq:non-periodic-estimate}
\big|\p(M_n\leq u_n)-\e^{-\theta\tau}\big|&\leq \left|\p(M_n\leq u_n)-\p(\mathscr W_{0,n}(A_n))\right|+\left|\p(\mathscr W_{0,n}(A_n))-L_n^{k_n}\right|\nonumber\\&\quad+\left|L_n^{k_n}-\e^{-\theta\tau}\right|=:I_n+I\!\!I_n+I\!\!I\!\!I_n.
\end{align}
To estimate $I\!\!I\!\!I_n$, we start by noting that, by \eqref{eq:exponential-estimate2}, there exists $C$ such that
\begin{align}
\label{eq:computation-exponential-1-EVL}
\left|\e^{-k_n\ell_n\p(A_n)}-\e^{-\theta \tau}\right|&\leq \e^{-\theta\tau}\left[\left|\theta\tau-k_n\ell_n\p(A_n)\right|+\o(\left|\theta\tau-k_n\ell_n\p(A_n)\right|)\right]\nonumber\\
&\leq C  \e^{-\theta\tau}\left|\theta\tau-n\p(A_n)+k_n t_n\p(A_n)\right|=C   \e^{-\theta\tau}\alpha_n,
\end{align}
where $\alpha_n:=\left|\theta\tau-n\p(A_n)+k_n t_n\p(A_n)\right|$.

From  \eqref{eq:exponential-estimate}, we can easily obtain that $|(1-\frac x{k})^{k}-\e^{-x}|=\e^{-x}\;\frac{x^2}{2k}(1+\o(\frac1k))$, uniformly for $x$ in compact intervals. Since $\lim_{n\to\infty}k_n\ell_n\p(A_n)= \theta\tau$, then using these facts and \eqref{eq:computation-exponential-1-EVL}, there exist  $C', C''>0$ such that
\begin{align*}
\left|(1-\ell_n\p(A_n))^{k_n}-\e^{-k_n\ell_n\p(A_n)}\right|&=\e^{-k_n\ell_n\p(A_n)}\frac{(k_n\ell_n\p(A_n))^2}{2k_n}\left(1+\o\left(\frac{1}{k_n}\right)\right)\nonumber \\
&\leq C'\e^{-k_n\ell_n\p(A_n)}\frac {(\theta\tau)^2}{k_n} \\
&\leq C'' \e^{-\theta\tau} \left(\frac {(\theta\tau)^2}{k_n}+\frac {(\theta\tau)^2}{k_n}\alpha_n\right).
\end{align*}
Hence, there exists $C_3>0$, not depending on $n$, such that
\begin{equation}
\label{eq:second-term-EVL}
I\!\!I\!\!I_n=\left|L_n^{k_n}-\e^{-\theta\tau}\right|\leq C_3 \e^{-\theta\tau}\left(\alpha_n+\frac {(\theta\tau)^2}{k_n}\right).
\end{equation}

We next estimate $I\!\!I_n$.
Recall that, by definition, $R_n=R(A_n)\to\infty$, as $n\to\infty$. This implies that $\sum_{j=R_n}^{\infty}\gamma(j)\to0$, as $n\to\infty$.
Consequently we may write for $n$ sufficiently large:
\begin{align*}
\Upsilon_n:=\Upsilon_{A_n}&\leq t_n[\p(A_n)+M\gamma(t_n)]+M\ell_n \gamma (t_n)
+M\ell_n\left[\p(A_n)+M\gamma(t_n)\right]\sum_{j=R_n}^{\ell_n-1}\gamma(j)\\
&\quad +\ell_n^2\left[(\p(A_n))^2+M\p(A_n)\gamma(t_n)\right]\\
&\leq t_n\p(A_n)+3M^2\ell_n\gamma(t_n)+ \ell_n^2\p(A_n)^2+M\ell_n^2\p(A_n)\gamma(t_n)+M\ell_n\p(A_n)\sum_{j=R_n}^{\ell_n-1}\gamma(j).
\end{align*}

Hence, there exists $C>0$ such that for all $n$ sufficiently large we have
\begin{equation}
\label{eq:Upsilon-n-aperiodic}
k_n\Upsilon_{n}\leq C\left(k_n t_n\p(A_n)+n\gamma(t_n)\left(1+\frac{n\p(A_n)}{k_n}\right)+\frac{(n\p(A_n))^2}{k_n}+n\p(A_n)\sum_{j=R_n}^{\ell_n-1}\gamma(j)\right).
\end{equation}

By the properties of the sequences  $(k_n)_{n\in\N},(t_n)_{n\in\N}$, we have that $k_n\Upsilon_n\to0$, as $n\to\infty$. Moreover, since $L_n=1-\ell_n\p(A_n)\to1$, as $n\to\infty$ then it is clear that, for $n$ sufficiently large all assumptions of Corollary~\ref{cor:extra-assumption} are satisfied. Consequently, we can use Corollary~\ref{cor:extra-assumption}, \eqref{eq:second-term-EVL} and \eqref{eq:Upsilon-n-aperiodic} to obtain the existence of $C_2>0$ such that 
\begin{align*}
I\!\!I_n
&\leq C L_n^{k_n-1} \left(k_nt_n\p(A_n)+n\gamma(t_n)\right.
+\left.\frac{(n\p(A_n))^2}{k_n}+\frac{n\p(A_n)n\gamma(t_n)}{k_n}+n\p(A_n)\sum_{j=R_n}^{\ell_n-1}\gamma(j)\right)\\
&\leq C_2 \;\e^{-\theta\tau}\left(k_nt_n\p(A_n)+n\gamma(t_n)\left(1+\frac{\theta\tau}{k_n}\right)+\frac{(\theta\tau)^2}{k_n}+\theta\tau \sum_{j=R_n}^{\ell_n-1}\gamma(j)\right).
\end{align*}  
To estimate $I_n$, we start by noting that since $\lim_{n\to\infty}k_n\ell_n\p(A_n)=\theta\tau$, $\lim_{n\to\infty} L_n=1$ and $\lim_{n\to\infty}k_n\Upsilon_n=0$, then
\begin{equation}
\label{eq:L+Upsilon}
(L_n+\Upsilon_n)^{k_n-1}=\frac{(1-\ell_n\p(A_n)+\Upsilon_n)^{k_n}}{L_n+\Upsilon_n}=\frac{\left(1-\frac{k_n\ell_n\p(A_n)+k_n\Upsilon_n}{k_n}\right)^{k_n}}{L_n+\Upsilon_n}\xrightarrow[n\to\infty]{}\e^{-\theta\tau}.
\end{equation}
Let $n$ be sufficiently large in order to $\ell_n+t_n>q$ so that $n>j+(k_n-1)(\ell_n+t_n)$, for all $j=0, 1,\ldots, q$. Hence we can define $D_{n,j}=f^{-n+j+(k_n-1)(\ell_n+t_n)}(U_n\setminus A_n)$. Using first Proposition~\ref{prop:relation-balls-annuli-general}, then the statement of Lemma~\ref{lem:crucial-estimate} and finally \eqref{eq:L+Upsilon} together with the fact that  $\p(U_n\setminus A_n)\sim(1-\theta)\tau/n$ and stationarity, it follows that there exists $C_1>0$ such that
\begin{align*}
I_n=\big|\p(M_n\leq u_n)-\p(\mathscr W_{0,n}(A_n))\big|
&\leq \sum_{j=1}^{q} \p\left(\mathscr W_{0,n}(A_n)\cap f^{-n+j}(U_n\setminus A_n)\right)\\
&\leq \sum_{j=1}^{q} \p\left(\bigcap_{j=0}^{k_n-2}\mathscr W_{j(\ell_n+t_n), \ell_n}(A_n)\cap f^{-(k_n-1)(\ell_n+t_n)}(D_{n,j})\right)\\
&\leq q (L_n+\Upsilon_n)^{k_n-1}\p(D_{n,j})=q (L_n+\Upsilon_n)^{k_n-1}\p(U_n\setminus A_n)\\
&\leq C_1\;\e^{-\theta\tau}\frac\tau n.
\end{align*}
Putting all these estimates together into \eqref{eq:non-periodic-estimate} we get that there exists $C>0$ such that for all $n\in\N$,
\begin{align*}
\big|\p(M_n\leq u_n)-\e^{-\theta\tau}\big|\leq  C\e^{-\theta\tau}\Bigg(&\left|\theta\tau-n\p(A_n)\right| +k_nt_n\frac{\theta\tau} n+n\gamma(t_n)\left(1+\frac{\theta\tau}{k_n}\right)+\frac{(\theta\tau)^2}{k_n}\\&+\theta\tau\sum_{j=R_n}^{\ell_n-1}\gamma(j)\Bigg).
\end{align*}
\end{proof}

\section{Improved error estimates for HTS}
\label{sec:HTS err}

\begin{proof}[Proof of Theorem~\ref{thm:sharp-error-HTS}]
Let $n=\lfloor\p(B_\eps)^{-1}\rfloor$. Recall that $\p(A_\eps)\sim\theta\p(B_\eps)$ and, as discussed in Remark~\ref{rmk:decay consts}, we observe that by hypothesis $t_\eps$ and $k_\eps$ are such that $t_\eps, k_\eps\to\infty$, as $\eps\to0$, and $t_\eps k_\eps=o(n)$. 
Hence, for $\eps$ sufficiently small we have $\ell_\eps=\lfloor n/k_\eps\rfloor-t_\eps>0$. Moreover, on account of \ref{item:repeller}, $\lim_{\eps\to0}k_\eps\ell_\eps\p(A_\eps)=\theta$ and 
$L_\eps=1-\ell_\eps\p(A_\eps)\sim 1-\theta/k_\eps\to1$, as $\eps\to0$.  In particular, for $\eps>0$ sufficiently small, we have $0<L_\eps\leq1$.
 Now, observe that:
\begin{align}
\label{eq:triangular-HTS}
\Big|\p\Big(&r_{B_\eps(\zeta)}>\tfrac \tau{\p(B_\eps)}\Big)-\e^{-\theta \tau}\Big|\leq\big|\p(\mathscr W_{0,\tau n}(B_\eps)-\p(\mathscr W_{0,\tau n}(A_\eps))\big|+\Big|\p(\mathscr W_{0,\tau n}(A_\eps))-L^{\lfloor\tau k_\eps\rfloor}\Big|\nonumber\\&+\left|L_\eps^{\lfloor\tau k_\eps\rfloor}-\e^{-\theta\tau}\right|+\big|\p(\mathscr W_{0,\tau \p(B_\eps)^{-1}}(B_\eps)-\p(\mathscr W_{0,\tau n}(B_\eps))\big|=:I_\eps+I\!\!I_\eps+I\!\!I\!\!I_\eps+I\!V_\eps.
\end{align}

Now, we assume that $\tau\geq k_\eps^{-1}$, so that $\lfloor \tau k_\eps\rfloor>0$, and begin by estimating $I\!\!I\!\!I_\eps$. 

Since $\lim_{\eps\to0}\lfloor\tau k_\eps\rfloor\ell_\eps\p(A_\eps)=\theta\tau$, by \eqref{eq:exponential-estimate2} there exists $C$ independent of $\eps$ and $\tau$ such that
\begin{align}
\label{eq:computation-exponential-1}
\left|\e^{-\lfloor\tau k_\eps\rfloor\ell_\eps\p(A_\eps)}-\e^{-\theta \tau}\right|&\leq \e^{-\theta\tau}\left[\left|\theta\tau-\lfloor\tau k_\eps\rfloor\ell_\eps\p(A_\eps)\right|+\o(\left|\theta\tau-\lfloor\tau k_\eps\rfloor\ell_\eps\p(A_\eps)\right|)\right]\nonumber\\
&\leq C \tau  \e^{-\theta\tau}\left|\theta-\frac{\p(A_\eps)}{\p(B_\eps)}+k_\eps t_\eps\p(A_\eps)\right|=C \tau  \e^{-\theta\tau}\alpha_\eps.
\end{align}
As before, from  \eqref{eq:exponential-estimate}, we can easily obtain $|(1-\frac x{k})^{k}-\e^{-x}|=\e^{-x}\;\frac{x^2}{2k}(1+\o(\frac1k))$, uniformly for $x$ in compact intervals. Since $\lim_{\eps\to0}\lfloor\tau k_\eps\rfloor\ell_\eps\p(A_\eps)=\theta\tau$, then using these facts and \eqref{eq:computation-exponential-1}, we have that there exist  $C', C''>0$ independent of $\eps$ and $\tau$ such that
\begin{align*}
\left|(1-\ell_\eps\p(A_\eps))^{\lfloor\tau k_\eps\rfloor}-\e^{-\lfloor\tau k_\eps\rfloor\ell_\eps\p(A_\eps)}\right|&=\e^{-\lfloor\tau k_\eps\rfloor\ell_\eps\p(A_\eps)}\frac{(\lfloor\tau k_\eps\rfloor\ell_\eps\p(A_\eps))^2}{2\lfloor\tau k_\eps\rfloor}\left(1+\o\left(\frac{1}{\lfloor\tau k_\eps\rfloor}\right)\right)\nonumber \\
&\leq C'\e^{-\lfloor\tau k_\eps\rfloor\ell_\eps\p(A_\eps)}\frac {\tau}{k_\eps} \\
&\leq C'' \e^{-\theta\tau} \left(\frac {\tau}{k_\eps}+\frac {\tau^2}{k_\eps}\alpha_\eps\right).
\end{align*}
Hence, there exists $C_3>0$, independent of  $\eps$, but not of $\tau$, such that
\begin{equation}
\label{eq:second-term}
I\!\!I\!\!I_\eps=\left|L_\eps^{\lfloor\tau k_\eps\rfloor}-\e^{-\theta\tau}\right|\leq C_3\; \e^{-\theta\tau}\left(\tau\alpha_\eps+\frac {\tau}{k_\eps}+\frac {\tau^2}{k_\eps}\alpha_\eps\right).
\end{equation}

To estimate $I\!\!I_\eps$, we need to use Corollary~\ref{cor:tau-included}. Note that the hypothesis on $t_\eps$ and $k_\eps$ recalled in the beginning of the proof guarantee that the assumptions of Corollary~\ref{cor:tau-included} are satisfied.
Also recall that, by definition, we have that $R_\eps\to \infty$, as $\eps\to0$. This implies that $\sum_{j=R_\eps}^{\infty}\gamma(j)\to0$, as $\eps\to0$. Consequently we may write for $\eps>0$ sufficiently small:
\begin{align*}
\Upsilon_{A_\eps}&\leq t_\eps[\p(A_\eps)+M\gamma(t_\eps)]+M\ell_\eps \gamma (t_\eps)
+M\ell_\eps\left[\p(A_\eps)+M\gamma(t_\eps)\right]\sum_{j=R_\eps}^{\ell_\eps-1}\gamma(j)\\
&\quad +\ell_\eps^2\left[(\p(A_\eps))^2+M\p(A_\eps)\gamma(t_\eps)\right]\\
&\leq t_\eps\p(A_\eps)+3M^2\ell_\eps\gamma(t_\eps)+M\ell_\eps\gamma(t_\eps)\ell_\eps\p(A_\eps)+ \ell_\eps^2\p(A_\eps)^2+M\ell_\eps\p(A_\eps)\sum_{j=R_\eps}^{\ell_\eps-1}\gamma(j).
\end{align*}

Hence, there exists $C>0$ such that for all $\eps>0$  we have
\begin{equation}
\label{eq:Upsilon-estimate}
k_\eps\Upsilon_{A_\eps}\leq C\left(k_\eps t_\eps\p(A_\eps)+(\p(B_\eps))^{-1}\gamma(t_\eps)+ \frac1{k_\eps}+\sum_{j=R_\eps}^{\ell_\eps-1}\gamma(j)\right)=:C\Gamma_\eps.
\end{equation}
By the choices of $k_\eps, t_\eps$ it is clear that $k_\eps\Upsilon_{A_\eps}\to0$, as $\eps\to0$. 
Recalling  that $L_\eps\sim 1-\theta/k_\eps\to1$, as $\eps\to0$ and using \eqref{eq:second-term}, there exists $C>0$ such that
\begin{align}
\label{eq:L+Upsilon-2}
(L_\eps+\Upsilon_{A_\eps})^{\lfloor\tau k_\eps\rfloor-1}&=\frac{L_\eps^{\lfloor\tau k_\eps\rfloor}}{L_\eps+\Upsilon_{A_\eps}}\left(1+\frac{\lfloor\tau k_\eps\rfloor\Upsilon_{A_\eps}L_\eps^{-1}}{\lfloor\tau k_\eps\rfloor}\right)^{\lfloor\tau k_\eps\rfloor}\leq  \frac{L_\eps^{\lfloor\tau k_\eps\rfloor}}{L_\eps+\Upsilon_{A_\eps}}\e^{\lfloor\tau k_\eps\rfloor\Upsilon_{A_\eps}L_\eps^{-1}}\nonumber \\&\leq C \left(\tau\alpha_\eps+\frac {\tau}{k_\eps}+\frac {\tau^2}{k_\eps}\alpha_\eps\right) \e^{-(\theta- k_\eps\Upsilon_{A_\eps}L_\eps^{-1})\tau}.
\end{align}
Applying Corollary~\ref{cor:tau-included}, we have 
\begin{align*}
\Big|\p(\mathscr W_{0,\tau n}(A_\eps))-L_\eps^{\lfloor\tau k_\eps\rfloor}\Big|&\leq(3+\Upsilon_{A_\eps})\lceil\tau k_\eps\rceil\Upsilon_{A_\eps}(L_\eps+\Upsilon_{A_\eps})^{\lfloor \tau k\rfloor-1}.
\end{align*}
Using equation~\eqref{eq:L+Upsilon-2}, there exists $C_2>0$ independent of $\eps$ and $\tau$ such that
\begin{equation}
\label{eq:ring-estimate}
I\!\!I_\eps=\Big|\p(\mathscr W_{0,\tau n}(A_\eps))-L^{\lfloor\tau k_\eps\rfloor}\Big|\leq C_2 \left(\tau^2\alpha_\eps\Gamma_\eps+\frac {\tau^2}{k_\eps}\Gamma_\eps+\frac {\tau^3}{k_\eps}\alpha_\eps\Gamma_\eps\right) \e^{ -(\theta- k_\eps\Upsilon_{A_\eps}L_\eps^{-1})\tau}.
\end{equation}
Let $\eps$ be sufficiently small in order to $\ell_\eps+t_\eps>q$ so that $\lfloor\tau n\rfloor>j+(\lfloor\tau k_\eps\rfloor-1)(\ell_\eps+t_\eps)$, for all $j=0, 1,\ldots, q$. Hence we can         define $D_{\eps,j}=f^{-\lfloor\tau n\rfloor+j+(\lfloor\tau k_\eps\rfloor-1)(\ell_\eps+t_\eps)}(U_\eps\setminus A_\eps)$. Using first Proposition~\ref{prop:relation-balls-annuli-general}, then the statement of Lemma~\ref{lem:crucial-estimate} and finally \eqref{eq:L+Upsilon-2} together with the fact that  $\p(U_\eps\setminus A_\eps)\sim(1-\theta)\tau/n$ and stationarity, it follows that there exists $C_1>0$ such that
\begin{align*}
I_\eps=\big|\p(\mathscr W_{0,\tau n}(B_\eps)&-\p(\mathscr W_{0,\tau n}(A_\eps))\big|= \big|\p(\mathscr W_{0,\lfloor\tau n\rfloor}(B_\eps)-\p(\mathscr W_{0,\lfloor\tau n\rfloor}(A_\eps))\big|\\
&\leq \sum_{j=1}^{q} \p\left(\mathscr W_{0,\lfloor\tau n\rfloor}(A_\eps)\cap f^{-n+j}(U_\eps\setminus A_\eps)\right)\\
&\leq \sum_{j=1}^{q} \p\left(\bigcap_{j=0}^{\lfloor\tau k_\eps\rfloor-2}\mathscr W_{j(\ell_\eps+t_\eps), \ell_\eps}(A_\eps)\cap f^{-(\lfloor\tau k_\eps\rfloor-1)(\ell_\eps+t_\eps)}(D_{\eps,j})\right)\\
&\leq q (L_\eps+\Upsilon_\eps)^{\lfloor\tau k_\eps\rfloor-1}\p(D_{\eps,j})=q (L_\eps+\Upsilon_\eps)^{\lfloor\tau k_\eps\rfloor-1}\p(U_\eps\setminus A_\eps)\\
&\leq C_1\;\p(B_\eps) \left(\tau\alpha_\eps+\frac {\tau}{k_\eps}+\frac {\tau^2}{k_\eps}\alpha_\eps\right) \e^{ -(\theta- k_\eps\Upsilon_{A_\eps}L_\eps^{-1})\tau}.
\end{align*}

Recalling the facts that $\lfloor\tau n\rfloor\leq \lfloor\tau\p(B_\eps)^{-1}\rfloor\leq \lfloor\tau n\rfloor +\tau$, $t_\eps=\o(\ell_\eps)$, $\gamma(t_\eps)=\o(\p(B_\eps))$, using Lemma~\ref{lem:time-gap}, with $s=0$ and $t'= \lfloor\tau \p(B_\eps)^{-1}\rfloor-\lfloor\tau n\rfloor\leq \tau$ and Lemma~\ref{lem:crucial-estimate}, with $D=\X$, we obtain that for $\eps$ sufficiently small:
\begin{align*}
\big|\p(&\mathscr W_{0,\tau \p(B_\eps)^{-1}}(B_\eps)-\p(\mathscr W_{0,\tau n}(B_\eps))\big|\leq \tau (\p(B_\eps)+M\gamma(t_\eps))\p(\mathscr W_{0,\tau n-t_\eps}(B_\eps))\\
&\leq \tau (\p(B_\eps)+M\gamma(t_\eps))\p\left(\bigcap_{j=0}^{\lfloor\tau k_\eps\rfloor-1}\mathscr W_{j(\ell_\eps+t_\eps), \ell_\eps}(A_\eps)\right)\leq 2M \tau \p(B_\eps) (L_\eps+\Upsilon_{A_\eps})^{\lfloor\tau k_\eps\rfloor}.
\end{align*}
Again, using \eqref{eq:L+Upsilon-2}, there exists $C_4>0$ independent of $\eps$ and $\tau$ such that
\begin{equation*}
I\!V_\eps=\big|\p(\mathscr W_{0,\tau \p(B_\eps)^{-1}}(B_\eps)-\p(\mathscr W_{0,\tau n}(B_\eps))\big|\leq C_4 \p(B_\eps) \left(\tau^2\alpha_\eps+\frac {\tau^2}{k_\eps}+\frac {\tau^3}{k_\eps}\alpha_\eps\right) \e^{ -(\theta- k_\eps\Upsilon_{A_\eps}L_\eps^{-1})\tau}.
\end{equation*}
Assuming that $\tau\geq 1$ and combining the estimates on $I_\eps, I\!\!I_\eps, I\!\!I\!\!I_\eps, I\!V_\eps$,  it follows that by taking $C_5=\max\{C_1,\ldots, C_4\}$, which is independent of $\eps$ and  $\tau$, we have:
$$
\Big|\p\left(r_{B_\eps(\zeta)}>\frac \tau{\p(B_\eps)}\right)-\e^{-\theta \tau}\Big|\leq 4C_5 \left(\tau^2\alpha_\eps\Gamma_\eps+\frac {\tau^2}{k_\eps}\Gamma_\eps+\frac {\tau^3}{k_\eps}\alpha_\eps\Gamma_\eps\right) \e^{ -(\theta- k_\eps\Upsilon_{A_\eps}L_\eps^{-1})\tau}.
$$
Note that if $k_\eps^{-1}\leq \tau <1$ then \eqref{eq:ring-estimate} can be rewritten in the following away:
\begin{equation*}
\Big|\p(\mathscr W_{0,\tau n}(A_\eps))-L^{\lfloor\tau k_\eps\rfloor}\Big|\leq C \Gamma_\eps \e^{ -\theta \tau},
\end{equation*}
for some $C>0$ independent of $\eps$. Hence, in this case we may write that there exists some constant $C>0$ independent of $\eps$ such that
$\Big|\p\left(r_{B_\eps(\zeta)}>\frac \tau{\p(B_\eps)}\right)-\e^{-\theta \tau}\Big|\leq C (\alpha_\eps+\Gamma_\eps) \e^{-\theta \tau}$.
Now, we consider the case when $\tau<k_\eps^{-1}$. By Corollary~\ref{cor:tau-included}, in this case $\left|\p(\mathscr W_{0,\tau n}(A_\eps))- (1-\lfloor\tau k_\eps\ell_\eps\rfloor\p(A_\eps))\right|\leq \tau k_\eps \Upsilon_\eps$. Also note that there exist $0<\xi<\theta\tau$ and $C>0$ depending on $\eps$ such that 
\begin{align*}
\left|\e^{-\theta\tau}-(1-\lfloor\tau k_\eps\ell_\eps\rfloor\p(A_\eps))\right|&=\left|1-\theta\tau+\frac{\e^{-\xi}}{2}(\theta\tau)^2-(1-\lfloor\tau k_\eps\ell_\eps\rfloor\p(A_\eps))\right|\\
&\leq C\alpha_\eps+\frac{\e^{-\xi}}{2}(\theta\tau)^2.
\end{align*}
Since $\tau<k_\eps^{-1}<\Gamma_\eps$, for $\eps>0$ sufficiently small we have $\e^{\theta\tau}\leq 1$, so we can write, again, that there exists $C>0$ independent of  $\eps$ such that
$$
\Big|\p\left(r_{B_\eps(\zeta)}>\frac \tau{\p(B_\eps)}\right)-\e^{-\theta \tau}\Big|\leq C (\alpha_\eps+\Gamma_\eps) \e^{-\theta \tau}.
$$
\end{proof}

 \section{An application: Rychlik systems}
\label{sec:Rych}

To apply the most powerful theorems in this paper, i.e., Theorems~\ref{thm:sharp-error-EVL} and \ref{thm:sharp-error-HTS}, we would like a dynamical system $f:\X\to \X$ with a measure $\mu$ which satisfies \ref{item:U-ball} and \ref{item:repeller} and moreover has decay of correlations, at more than quadratic rate, against $L^1$ observables.  In this section we give a natural class of examples of such a system, particular examples to keep in mind here are piecewise smooth, uniformly expanding, full-branched interval maps with an absolutely continuous invariant measure.  The main idea is to use maps to which the seminal paper \cite{R83} applies, along with some extra information on the smoothness of potentials which will give us continuity of measure.  The maps we define below are certainly not in the most general form possible, but we will make some restrictions for expository reasons.  Note that the setup of \cite{K12} also applies to this class of examples.  We also recall that our theory applies to higher dimensional examples, such as those studied in \cite{S00}.

\subsection{Interval maps modelled by a full shift}

We will consider our maps $F:\cup_iC^i\to X$ where $X$ is an interval, $\cup_iC^i\subset X$ is an at most countable union of open intervals and $F: C^i\to X$ is a bijection.  Let $X^\infty=\{x\in X:F^n(x) \text{ is defined for all } n\in \N\}$.  Given a sequence of natural numbers  $i_0, \ldots, i_{n-1}$, the collection of points $x$ which have $F^k(x)\in C^{i_k}$ for $k=0, \ldots, n-1$ is the corresponding \emph{$n$-cylinder}.  Let $\P_n$ denote the collection of all such cylinders.  We will further assume that for any $x, y\in X^\infty$, if $x\neq y$ then there exists $n\in \N$ such that $x$ and $y$ are in different $n$-cylinders.  Since such maps are nicely modelled by the full shift, we denote the class of these maps by $\FS$.  

\subsection{Thermodynamic formalism for $\FS$ maps}
Given a  potential $\Phi:X\to \R$, we define the \emph{$n$-th variation} as 
$$V_n(\Phi):=\sup_{C_n\in \P_n}\sup_{x, y\in C_n}\{|\Phi(x)-\Phi(y)|\}.$$
The potential is said to have \emph{summable variations} if $\sum_{n\ge 1} V_n(\Phi)<\infty$, and 
be \emph{locally H\"older} if there exists $\alpha>0$ such that $V_n(\Phi)=O(e^{-\alpha n})$. 

Define $$Z_n(\Phi):=\sum_{F^n x=x}e^{S_n\Phi(x)}$$
where $S_n\Phi$ is the ergodic sum $\Phi+\Phi\circ F+\cdots+ \Phi\circ F^{n-1}$.
Assuming that $\Phi$ has summable variations then we define the \emph{pressure} as
$$P(\Phi):=\lim_{n\to \infty}\frac1n\log Z_n(\Phi).$$

 \begin{theorem}
 Suppose that $\Phi$ is a locally H\"older  potential and $P(\Phi)<\infty$.  Then there exists a $(\Phi -P(\Phi))$-conformal probability measure $m_\Phi$ and a density $\rho_\Phi$ such that $d\mu_\Phi=\rho_\Phi dm_\Phi$ defines an invariant probability measure.  Moreover,  $\log\rho_\Phi$ is locally H\"older; in particular $\rho_\Phi$ is bounded away from zero and infinity on cylinders. If  $-\int\Phi~d\mu<\infty$ then $\mu_\Phi$ is an equilibrium state for $\Phi$.
 \label{thm:RPF}
\end{theorem}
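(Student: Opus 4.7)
The plan is to follow the classical Ruelle--Perron--Frobenius approach via the transfer operator
\[
\mathcal L_\Phi g(x) = \sum_{F(y)=x} e^{\Phi(y)} g(y),
\]
which under the hypothesis $P(\Phi)<\infty$ (equivalently, $\sum_i \sup_{C^i} e^{\Phi}$ is controlled after normalising by pressure) defines a bounded operator on a suitable space of locally H\"older functions on $X^\infty$. The first step is to build the conformal measure as an eigenmeasure of the dual operator: I would consider the continuous map $\nu \mapsto \mathcal L_\Phi^*\nu / (\mathcal L_\Phi^*\nu)(1)$ on a convex, weak-$*$ compact subset of probability measures and apply the Schauder--Tychonoff fixed-point theorem to produce $m_\Phi$ satisfying $\mathcal L_\Phi^* m_\Phi = e^{P(\Phi)} m_\Phi$; unpacked against indicators of subsets $E \subset C^i$ using the bijectivity of $F|_{C^i}$, this eigenmeasure relation is precisely the $(\Phi - P(\Phi))$-conformality.

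The key tool for producing the density $\rho_\Phi$ is a uniform bounded distortion estimate. Since $\Phi$ is locally H\"older, for $x,y$ in the same $n$-cylinder one has
\[
|S_n\Phi(x) - S_n\Phi(y)| \le \sum_{k=1}^{n} V_k(\Phi) \le \sum_{k \ge 1} V_k(\Phi) < \infty,
\]
uniformly in $n$, with the tail $\sum_{k \ge n} V_k(\Phi)$ decaying exponentially. Combined with conformality of $m_\Phi$, this forces the sequence $\rho_n := e^{-nP(\Phi)}\mathcal L_\Phi^n 1$ to be uniformly bounded above and below on each cylinder, and the oscillation of $\log\rho_n$ on $n$-cylinders to be controlled by the same exponential tail. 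Taking a weak limit of the Ces\`aro averages $\tfrac1N\sum_{n<N}\rho_n$ (or, more cleanly, applying a Birkhoff cone-contraction argument) yields a positive $\rho_\Phi$ with $\mathcal L_\Phi \rho_\Phi = e^{P(\Phi)}\rho_\Phi$, and the same distortion estimate transfers directly to show that $\log \rho_\Phi$ is locally H\"older with the same exponent as $\Phi$ and bounded on each cylinder.

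Invariance of $\mu_\Phi := \rho_\Phi\, dm_\Phi$ then follows from the standard calculation using both eigenvalue identities together with $\mathcal L_\Phi((h\circ F)\psi) = h\, \mathcal L_\Phi \psi$:
\[
\int (g\circ F)\, d\mu_\Phi = e^{-P(\Phi)}\int \mathcal L_\Phi((g\circ F)\rho_\Phi)\, dm_\Phi = e^{-P(\Phi)}\int g\, \mathcal L_\Phi \rho_\Phi\, dm_\Phi = \int g\, d\mu_\Phi.
\]
For the equilibrium-state claim when $-\int \Phi\, d\mu_\Phi < \infty$: the construction yields the Gibbs-type estimate $\mu_\Phi(C_n) \asymp \exp(S_n\Phi(x) - nP(\Phi))$, uniformly for $x \in C_n \in \P_n$, which combined with the Shannon--McMillan--Breiman theorem forces $h_{\mu_\Phi}(F) + \int \Phi\, d\mu_\Phi = P(\Phi)$, identifying $\mu_\Phi$ as an equilibrium state via the variational principle.

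The main obstacle is that we allow a possibly infinite alphabet: there is no a priori uniform bound on how many branches contribute mass, and the Schauder--Tychonoff step for $m_\Phi$ requires choosing a topology (for instance, weak-$*$ convergence against functions vanishing at infinity on $X^\infty$) in which $\mathcal L_\Phi^*$ acts continuously on a compact convex set of probabilities. The hypothesis $P(\Phi)<\infty$ is exactly what prevents mass from escaping to infinity through the infinitely many inverse branches, and once this is built into the functional-analytic setup, the locally H\"older property of $\Phi$ drives the distortion estimates and the remaining arguments reduce to the classical finite-branched case.
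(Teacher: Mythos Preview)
The paper does not actually prove this theorem: immediately after the statement it says that the result is part of \cite[Theorem~1]{S01}, with the local H\"older continuity of $\log\rho_\Phi$ coming from Remark~2 there, and points to \cite{MU01} and \cite{S03} for the full-shift case. So there is no in-paper argument to compare against; the authors are simply importing the Ruelle--Perron--Frobenius theorem for countable Markov shifts from Sarig's work.

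Your sketch is a faithful outline of exactly that classical argument (dual-operator fixed point for the conformal measure, bounded distortion from summable/exponential variations to control $\mathcal L_\Phi^n 1$, the eigenfunction identity giving invariance, and the Gibbs estimate plus SMB for the equilibrium-state claim), and it correctly flags the genuine technical issue in the countable-alphabet setting, namely arranging compactness for the Schauder--Tychonoff step. This is precisely what the cited references handle, so your proposal is aligned with the sources the paper invokes, even though the paper itself supplies no proof.
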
 

This theorem is part of \cite[Theorem 1]{S01} with the smoothness of $\rho_\Phi$ discussed in Remark 2 of that paper.  Specifically for the full shift case we consider here, see also \cite{MU01} and \cite{S03}.
Note that as proved in Sarig's thesis, this theorem also holds under summable variations.

\subsection{Continuity of measures: conditions \ref{item:U-ball} and \ref{item:repeller}}
\label{ssec:cty}
Theorem~\ref{thm:RPF} shows that $\rho_\Phi$ is fairly smooth in a symbolic sense (i.e., in terms of the metric on $X^\infty$ induced from the cylinder structure).  However, this type of property also passes to the usual metric on the interval: namely, for $x\in X^\infty$
\begin{equation}
\lim_{\delta\to 0} \frac{\mu_\Phi(B_\delta(x))}{m_\Phi(B_\delta(x))} = \rho_\Phi(x).
\label{eq:Radon N}
\end{equation}
This follows since by the H\"older continuity of $\log\rho_\Phi$, for any $\eps>0$ there exists $n$ such that for $x,y$ in the same $n$-cylinder, $e^{-\eps}\rho_\Phi(x)\le \rho_\Phi(y)\le e^\eps\rho_\Phi(x)$.  Hence, once $\delta$ is so small that $B_\delta(x)$ is contained in the $n$-cylinder at $x$, 
$$e^{-\eps}\rho_\Phi(x)\le \frac{\mu_\Phi(B_\delta(x))}{m_\Phi(B_\delta(x))}\le e^{\eps}\rho_\Phi(x).$$

A further remark is that since $m_\Phi$ is non-atomic and gives any open set positive measure, for $z\in X^\infty$, $\delta\mapsto m_\Phi(B_\delta(z))$ is continuous at 0.  Moreover, by \eqref{eq:Radon N}, $\delta\mapsto \mu_\Phi(B_\delta(z))$ is continuous at 0. So clearly \ref{item:U-ball} holds.  For \ref{item:repeller}, we can use the conformality of $m_\Phi$ to prove that in fact $\theta=1-e^{S_p\Phi(z)-pP(\Phi)}$.

\subsection{Rychlik conditions}
In order to get the nice decay of correlations for maps in $\FS$ with good potentials, we use \cite{R83}.  This requires the further assumption that  $Var(e^\Phi)<\infty$, where here we mean $Var$ in the classical sense of variations.  It is easy to show that this implies that  $P(\Phi)<\infty$.  In fact, as we see in the theorem below, we require more: if we define $\overline\Phi:\cup_i\overline{C^i}\to [-\infty, \infty)$ by
\begin{equation*}
\overline\Phi(x) =
\begin{cases} \Phi(x) & \text{ if } x\in \cup_iC^i,\\
-\infty & \text{ if } x\in \cup_i\partial C^i.
\end{cases}
\end{equation*}

\begin{theorem}[\cite{R83}]
Suppose that $\Phi$ is a locally H\"older potential with $Var(e^{\overline\Phi})<\infty$ and $\Phi<0$.  Then the measure $\mu_\Phi$ obtained in Theorem~\ref{thm:RPF} has exponential decay of correlations for BV against $L^1$ observables.
\end{theorem}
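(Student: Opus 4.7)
The plan is to follow Rychlik's transfer operator strategy: establish a Lasota--Yorke type inequality on BV for a suitably normalised transfer operator, deduce quasi-compactness and a spectral gap, and then translate the spectral gap into exponential decay of correlations of BV observables against $L^1$ observables.

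First I would define the transfer operator, acting formally by
\[
Lf(x) = \sum_{y\in F^{-1}(x)} e^{\Phi(y)-P(\Phi)}\,f(y),
\]
and verify using $Var(e^{\overline\Phi})<\infty$ together with $\Phi<0$ that $L$ is a bounded operator on the space $BV(X)$ of functions of bounded variation (with the norm $\|\cdot\|_{BV}=\|\cdot\|_\infty+Var(\cdot)$). The key estimate is the Lasota--Yorke inequality: there exist constants $C>0$, $K>0$ and $\lambda\in(0,1)$ such that, for every $n\in\N$ and every $f\in BV$,
\[
Var(L^n f)\leq C\lambda^n Var(f) + K\|f\|_{L^1(m_\Phi)}.
\]
This is the technical core and what I would expect to be the main obstacle, since it requires carefully distributing the variation of $L^n f$ across preimage branches, using $Var(e^{\overline\Phi})<\infty$ to control the contribution of the weights at cylinder endpoints, and using $\Phi<0$ (equivalently, a spectral radius estimate involving $P(\Phi)$) to ensure the contraction factor $\lambda<1$.

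Once the Lasota--Yorke inequality is in place, Hennion's compactness criterion (or the Ionescu-Tulcea--Marinescu theorem) together with the compactness of the inclusion $BV\hookrightarrow L^1(m_\Phi)$ yields quasi-compactness of $L$ on $BV$, with essential spectral radius at most $\lambda$. I would then show that $1$ is a simple isolated eigenvalue with eigenfunction $\rho_\Phi$ (identifying it with the density produced in Theorem~\ref{thm:RPF}), and that there are no other eigenvalues on the unit circle; the latter uses the mixing properties inherited from the full-shift symbolic model. This gives the spectral decomposition
\[
L^n f = \rho_\Phi \int f\,dm_\Phi + R^n f, \qquad \|R^n f\|_{BV}\leq C\tilde\lambda^n\|f\|_{BV},
\]
for some $\tilde\lambda\in(0,1)$.

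Finally I would convert the spectral gap into the desired decay of correlations statement. Using the duality $\int (Lf)\cdot g\,dm_\Phi=\int f\cdot(g\circ F)\,dm_\Phi$ and $d\mu_\Phi=\rho_\Phi\,dm_\Phi$, for $\phi\in BV$ and $\psi\in L^\infty$ (the relevant case, since $L^1(\mu_\Phi)\subset L^1(m_\Phi)$ up to a bounded multiplicative constant coming from $\rho_\Phi$ being bounded on cylinders) one computes
\[
\int \phi\,(\psi\circ F^n)\,d\mu_\Phi - \int\phi\,d\mu_\Phi\int\psi\,d\mu_\Phi = \int R^n(\rho_\Phi\phi)\cdot\psi\,dm_\Phi.
\]
Estimating the right-hand side by $\|R^n(\rho_\Phi\phi)\|_\infty\,\|\psi\|_{L^1(m_\Phi)}\leq C\tilde\lambda^n\|\rho_\Phi\phi\|_{BV}\,\|\psi\|_{L^1(m_\Phi)}$, and using that $BV$ is a Banach algebra so that $\|\rho_\Phi\phi\|_{BV}\leq C'\|\rho_\Phi\|_{BV}\|\phi\|_{BV}$, yields $\cv_{\mu_\Phi}(\phi,\psi,n)=O(\tilde\lambda^n)$, which is exactly exponential decay of correlations of BV against $L^1$ observables as required.
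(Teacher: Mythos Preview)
The paper does not prove this theorem at all: it is stated as a citation of \cite{R83} and no argument is given. So there is no ``paper's own proof'' to compare against; the authors are simply invoking Rychlik's result as a black box.

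Your sketch is a faithful outline of Rychlik's original argument (Lasota--Yorke inequality, quasi-compactness via Ionescu-Tulcea--Marinescu/Hennion, spectral gap, and the standard duality computation), so in that sense it is the ``right'' proof. Two small comments. First, in the final step you write ``for $\psi\in L^\infty$'' and then add a somewhat muddled parenthetical about $L^1$; in fact the bound you actually write down, $\|R^n(\rho_\Phi\phi)\|_\infty\|\psi\|_{L^1(m_\Phi)}$, only uses the $L^1$-norm of $\psi$, so it holds directly for all $\psi\in L^1(m_\Phi)$ without any density argument, and comparability of $L^1(m_\Phi)$ and $L^1(\mu_\Phi)$ comes from $\rho_\Phi$ being bounded above and below. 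Second, the Banach algebra step $\|\rho_\Phi\phi\|_{BV}\leq C'\|\rho_\Phi\|_{BV}\|\phi\|_{BV}$ requires $\rho_\Phi\in BV$; this is part of Rychlik's conclusion (the fixed point of $L$ lies in BV), so it is available, but you should say so explicitly rather than rely only on the local H\"older property from Theorem~\ref{thm:RPF}.
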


We say that a system $(X, F, \Phi)$ satisfying the conditions in the theorem is a \emph{Rychlik system}.

Notice that a characteristic function on an annulus has BV-norm bounded by 4, so in Theorem~\ref{thm:sharp-error-EVL} (Theorem~\ref{thm:sharp-error-HTS}) we can take $M$ to be 4 for $n$ large enough (for $\eps$ small enough).  

A simple example of an application of this theorem is to the case that there exist $\lambda>1$ and $K\ge 1$ such that $F$ is $C^{1+Lip}$ and for each $i$, $F:C_i\to X$ has bounded distortion: $|DF(x)|/|DF(y)|\le K$ for $x, y\in C_i$.  Then set $\Phi(x)=-\log|DF(x)|$.  Our conditions guarantee local H\"older continuity for $\Phi$ and $P(\Phi)<\infty$ as well as $Var(e^{\overline\Phi})\le (C+K)m(X)<\infty$ (where $C$ is the Lipschitz constant) and $\Phi<0$.  If moreover, the union of the domains $C_i$ is equal to $X$ up to sets of Lebesgue measure zero, then $m_\Phi$ is Lebesgue and the resulting measure $\mu_\Phi$ is an equilibrium state which is also probability measure absolutely continuous w.r.t. Lebesgue (an \emph{acip}).

\subsection{Discussion of $R_\eps$}
Given a Rychlik system $(\X, f, \mu)$ as above,
we know from \cite{STV02} that if $\mu$ is an ergodic $F$-invariant measure with positive entropy, then for a typical point $\zeta$, $R_\eps=R(A_\eps(\zeta))$ grows like $\eps^{-d}$ for some $d\in (0,1]$ (in fact $d$ is the dimension of the measure $\mu$).  Also it is easy to see that for $\zeta$ a repelling periodic point, $R(A_\eps(\zeta))$ is at least of order $\eps^{-d'}$ where $d'$ depends on the strength of repulsion at $\zeta$.  An argument showing that indeed $R(A_\eps(\zeta))\to \infty$ for all points $\zeta$ was given in \cite[Section 6]{FFT12}.

\bibliographystyle{amsalpha}

\bibliography{ErrorTerms.bib}

\providecommand{\bysame}{\leavevmode\hbox to3em{\hrulefill}\thinspace}
\providecommand{\MR}{\relax\ifhmode\unskip\space\fi MR }
\providecommand{\MRhref}[2]{%
  \href{http://www.ams.org/mathscinet-getitem?mr=#1}{#2}
}
\providecommand{\href}[2]{#2}
\begin{thebibliography}{AFLV11}

\bibitem[Aba01]{A01}
Miguel Abadi, \emph{Exponential approximation for hitting times in mixing
  processes}, Math. Phys. Electron. J. \textbf{7} (2001), Paper 2, 19 pp.
  (electronic). \MR{1871384 (2002h:60069)}

\bibitem[Aba04]{A04}
\bysame, \emph{Sharp error terms and necessary conditions for exponential
  hitting times in mixing processes}, Ann. Probab. \textbf{32} (2004), no.~1A,
  243--264. \MR{MR2040782 (2004m:60042)}

\bibitem[ACS00]{ACS00}
Valentin Afraimovich, Jean-Rene Chazottes, and Beno{\^{\i}}t Saussol,
  \emph{Local dimensions for {P}oincar\'e recurrences}, Electron. Res. Announc.
  Amer. Math. Soc. \textbf{6} (2000), 64--74 (electronic). \MR{1777857
  (2001d:37021)}

\bibitem[AFLV11]{AFLV11}
Jos{\'e}~F. Alves, Jorge~M. Freitas, Stefano Luzzatto, and Sandro Vaienti,
  \emph{From rates of mixing to recurrence times via large deviations}, Adv.
  Math. \textbf{228} (2011), no.~2, 1203--1236. \MR{2822221}

\bibitem[AFV12]{AFV12}
Hale Ayta{\c c}, Jorge~Milhazes Freitas, and Sandro Vaienti, \emph{Laws of rare
  events for deterministic and random dynamical systems}, To appear in
  Transactions of the American Mathematical Society, (Preprint
  arXiv:1207.5188), 2012.

\bibitem[AG01]{AG01}
M.~Abadi and A.~Galves, \emph{Inequalities for the occurrence times of rare
  events in mixing processes. {T}he state of the art}, Markov Process. Related
  Fields \textbf{7} (2001), no.~1, 97--112, Inhomogeneous random systems
  (Cergy-Pontoise, 2000). \MR{MR1835750}

\bibitem[AL13]{AL13}
Miguel Abadi and Rodrigo Lambert, \emph{The distribution of the short-return
  function}, Nonlinearity \textbf{26} (2013), no.~5, 1143--1162. \MR{3043376}

\bibitem[Ald82]{A82}
David~J. Aldous, \emph{Markov chains with almost exponential hitting times},
  Stochastic Process. Appl. \textbf{13} (1982), no.~3, 305--310. \MR{671039
  (84b:60096a)}

\bibitem[AS11]{AS11}
Miguel Abadi and Benoit Saussol, \emph{Hitting and returning to rare events for
  all alpha-mixing processes}, Stochastic Process. Appl. \textbf{121} (2011),
  no.~2, 314--323. \MR{2746177}

\bibitem[AV08]{AV08}
Miguel Abadi and Sandro Vaienti, \emph{Large deviations for short recurrence},
  Discrete Contin. Dyn. Syst. \textbf{21} (2008), no.~3, 729--747.
  \MR{MR2399435 (2009j:37015)}

\bibitem[AV09]{AV09}
Miguel Abadi and Nicolas Vergne, \emph{Sharp error terms for return time
  statistics under mixing conditions}, J. Theoret. Probab. \textbf{22} (2009),
  no.~1, 18--37. \MR{MR2472003 (2010f:60068)}

\bibitem[CHM91]{CHM91}
Michael~R. Chernick, Tailen Hsing, and William~P. McCormick, \emph{Calculating
  the extremal index for a class of stationary sequences}, Adv. in Appl.
  Probab. \textbf{23} (1991), no.~4, 835--850. \MR{MR1133731 (93c:60073)}

\bibitem[Col01]{C01}
P.~Collet, \emph{Statistics of closest return for some non-uniformly hyperbolic
  systems}, Ergodic Theory Dynam. Systems \textbf{21} (2001), no.~2, 401--420.
  \MR{MR1827111 (2002a:37038)}

\bibitem[Dol98]{D98}
Dmitry Dolgopyat, \emph{On decay of correlations in {A}nosov flows}, Ann. of
  Math. (2) \textbf{147} (1998), no.~2, 357--390. \MR{1626749 (99g:58073)}

\bibitem[DY06]{DY06}
Mark~F. Demers and Lai-Sang Young, \emph{Escape rates and conditionally
  invariant measures}, Nonlinearity \textbf{19} (2006), no.~2, 377--397.
  \MR{2199394 (2006i:37051)}

\bibitem[Fel50]{F50}
William Feller, \emph{An {I}ntroduction to {P}robability {T}heory and {I}ts
  {A}pplications. {V}ol. {I}}, John Wiley \& Sons Inc., New York, N.Y., 1950.
  \MR{MR0038583 (12,424a)}

\bibitem[FF08]{FF08}
Ana Cristina~Moreira Freitas and Jorge~Milhazes Freitas, \emph{On the link
  between dependence and independence in extreme value theory for dynamical
  systems}, Statist. Probab. Lett. \textbf{78} (2008), no.~9, 1088--1093.
  \MR{MR2422964 (2009e:37006)}

\bibitem[FFT10]{FFT10}
Ana Cristina~Moreira Freitas, Jorge~Milhazes Freitas, and Mike Todd,
  \emph{Hitting time statistics and extreme value theory}, Probab. Theory
  Related Fields \textbf{147} (2010), no.~3-4, 675--710. \MR{2639719
  (2011g:37015)}

\bibitem[FFT11]{FFT11}
\bysame, \emph{Extreme value laws in dynamical systems for non-smooth
  observations}, J. Stat. Phys. \textbf{142} (2011), no.~1, 108--126.
  \MR{2749711 (2012a:60149)}

\bibitem[FFT12]{FFT12}
\bysame, \emph{The extremal index, hitting time statistics and periodicity},
  Adv. Math. \textbf{231} (2012), no.~5, 2626--2665. \MR{2970462}

\bibitem[FFT13]{FFT13}
\bysame, \emph{The compound {P}oisson limit ruling periodic extreme behaviour
  of non-uniformly hyperbolic dynamics}, Comm. Math. Phys. \textbf{321} (2013),
  no.~2, 483--527. \MR{3063917}

\bibitem[FP12]{FP12}
Andrew Ferguson and Mark Pollicott, \emph{Escape rates for gibbs measures},
  Ergod. Theory Dynam. Systems \textbf{32} (2012), 961--988.

\bibitem[Fre13]{F13}
Jorge~Milhazes Freitas, \emph{Extremal behaviour of chaotic dynamics}, Dyn.
  Syst. \textbf{28} (2013), no.~3, 302--332.

\bibitem[GS97]{GS97}
A.~Galves and B.~Schmitt, \emph{Inequalities for hitting times in mixing
  dynamical systems}, Random Comput. Dynam. \textbf{5} (1997), no.~4, 337--347.
  \MR{1483874 (98i:60017)}

\bibitem[HN14]{HN14}
Mark Holland and Matthew Nicol, \emph{Speed of convergence to an extreme value
  distribution for non-uniformly hyperbolic dynamical systems}, Preprint, 2014.

\bibitem[HNT12]{HNT12}
Mark Holland, Matthew Nicol, and Andrei T{\"o}r{\"o}k, \emph{Extreme value
  theory for non-uniformly expanding dynamical systems}, Trans. Amer. Math.
  Soc. \textbf{364} (2012), no.~2, 661--688. \MR{2846347 (2012k:37064)}

\bibitem[HSV99]{HSV99}
Masaki Hirata, Beno{\^{\i}}t Saussol, and Sandro Vaienti, \emph{Statistics of
  return times: a general framework and new applications}, Comm. Math. Phys.
  \textbf{206} (1999), no.~1, 33--55. \MR{MR1736991 (2001c:37007)}

\bibitem[HV10]{HV10}
Nicolai Haydn and Sandro Vaienti, \emph{The {R}\'enyi entropy function and the
  large deviation of short return times}, Ergodic Theory Dynam. Systems
  \textbf{30} (2010), no.~1, 159--179. \MR{2586350 (2011a:37017)}

\bibitem[HW79]{HW79}
W.~J. Hall and Jon~A. Wellner, \emph{The rate of convergence in law of the
  maximum of an exponential sample}, Statist. Neerlandica \textbf{33} (1979),
  no.~3, 151--154. \MR{552259 (80k:60029)}

\bibitem[Kel12]{K12}
Gerhard Keller, \emph{Rare events, exponential hitting times and extremal
  indices via spectral perturbation}, Dynamical Systems \textbf{27} (2012),
  no.~1, 11--27.

\bibitem[KL09]{KL09}
Gerhard Keller and Carlangelo Liverani, \emph{Rare events, escape rates and
  quasistationarity: some exact formulae}, J. Stat. Phys. \textbf{135} (2009),
  no.~3, 519--534. \MR{2535206 (2011a:37012)}

\bibitem[Lea74]{L73}
M.~R. Leadbetter, \emph{On extreme values in stationary sequences}, Z.
  Wahrscheinlichkeitstheorie und Verw. Gebiete \textbf{28} (1973/74), 289--303.
  \MR{MR0362465 (50 \#14906)}

\bibitem[LLR83]{LLR83}
M.~R. Leadbetter, Georg Lindgren, and Holger Rootz{{\'e}}n, \emph{Extremes and
  related properties of random sequences and processes}, Springer Series in
  Statistics, Springer-Verlag, New York, 1983. \MR{MR691492 (84h:60050)}

\bibitem[LN89]{LN89}
M.~R. Leadbetter and S.~Nandagopalan, \emph{On exceedance point processes for
  stationary sequences under mild oscillation restrictions}, Extreme value
  theory ({O}berwolfach, 1987), Lecture Notes in Statist., vol.~51, Springer,
  New York, 1989, pp.~69--80. \MR{MR992049 (90k:60065)}

\bibitem[MS01]{MS01}
William~P. McCormick and Lynne Seymour, \emph{Rates of convergence and
  approximations to the distribution of the maximum of chain-dependent
  sequences}, Extremes \textbf{4} (2001), no.~1, 23--52. \MR{1876178
  (2002i:60106)}

\bibitem[MU01]{MU01}
R.~Daniel Mauldin and Mariusz Urba{\'n}ski, \emph{Gibbs states on the symbolic
  space over an infinite alphabet}, Israel J. Math. \textbf{125} (2001),
  93--130. \MR{1853808 (2002k:37048)}

\bibitem[Res08]{R08}
Sidney~I. Resnick, \emph{Extreme values, regular variation and point
  processes}, Springer Series in Operations Research and Financial Engineering,
  Springer, New York, 2008, Reprint of the 1987 original. \MR{2364939
  (2008h:60002)}

\bibitem[Ryc83]{R83}
Marek Rychlik, \emph{Bounded variation and invariant measures}, Studia Math.
  \textbf{76} (1983), no.~1, 69--80. \MR{MR728198 (85h:28019)}

\bibitem[Sar01]{S01}
Omri~M. Sarig, \emph{Thermodynamic formalism for null recurrent potentials},
  Israel J. Math. \textbf{121} (2001), 285--311. \MR{1818392 (2001m:37059)}

\bibitem[Sar03]{S03}
Omri Sarig, \emph{Existence of {G}ibbs measures for countable {M}arkov shifts},
  Proc. Amer. Math. Soc. \textbf{131} (2003), no.~6, 1751--1758 (electronic).
  \MR{1955261 (2004b:37056)}

\bibitem[Sau00]{S00}
Beno{\^{\i}}t Saussol, \emph{Absolutely continuous invariant measures for
  multidimensional expanding maps}, Israel J. Math. \textbf{116} (2000),
  223--248. \MR{1759406 (2001e:37037)}

\bibitem[Smi82]{S82}
Richard~L. Smith, \emph{Uniform rates of convergence in extreme-value theory},
  Adv. in Appl. Probab. \textbf{14} (1982), no.~3, 600--622. \MR{665296
  (84h:60054)}

\bibitem[STV02]{STV02}
B.~Saussol, S.~Troubetzkoy, and S.~Vaienti, \emph{Recurrence, dimensions, and
  {L}yapunov exponents}, J. Statist. Phys. \textbf{106} (2002), no.~3-4,
  623--634. \MR{MR1884547 (2003a:37007)}

\end{thebibliography}

\end{document}